\numberwithin{equation}{section}
\newcommand{\be}{\begin{equation}}
\newcommand{\ee}{\end{equation}}
\newcommand{\ben}{\begin{eqnarray*}}
\newcommand{\enn}{\end{eqnarray*}}
\newtheorem{proposition}{Proposition}[section]
\newtheorem{theorem}{\textbf Theorem}[section]
\newtheorem{lemma}{\textbf Lemma}[section]
 \numberwithin{equation}{section}
\newtheorem{remark}{Remark}[section]
\renewcommand{\theequation}{\arabic{section}.\arabic{equation}}
\begin{document}

\title{Existence and Blow-up Profiles of Ground States in Second Order Multi-population Mean-field Games Systems}
\author{
Fanze Kong \thanks{Department of Applied Mathematics, University of Washington, Seattle, WA 98195, USA; fzkong@uw.edu},
Juncheng Wei \thanks{Department of Mathematics, Chinese University of Hong Kong,
 Shatin, NT, Hong Kong; wei@math.cuhk.edu.hk}
and 
Xiaoyu Zeng \thanks{Department of Mathematics, Wuhan University of Technology, Wuhan 430070, China; xyzeng@whut.edu.cn.}
        }

\date{\today}
\maketitle
\begin{abstract}

In this paper, we utilize the variational structure to study the existence and asymptotic profiles of ground states in multi-population ergodic Mean-field Games systems subject to some local couplings with mass critical exponents.  Of concern the attractive and repulsive interactions, we impose some mild conditions on trapping potentials and  firstly classify the existence of ground states in terms of intra-population and interaction coefficients.  Next, as the intra-population and inter-population coefficients approach some critical values, we show the ground states blow up at one of global minima of potential functions and the corresponding profiles are captured by ground states to potential-free Mean-field Games systems for single population up to translations and rescalings.  Moreover, under certain types of potential functions, we establish the refined blow-up profiles of corresponding ground states. In particular, we show that the ground states concentrate at the flattest global minima of potentials.

\end{abstract}

{\sc 2020 MSC}: {35Q89
 (35A15)}
\\
Keywords:  Multi-population Mean-field Games Systems; Variational Approaches; Constrained
Minimization; Blow-up Solutions

\section{Introduction}
Mean-field Games systems are proposed to describe decision-making among a huge number of indistinguishable rational agents.  In real world, various problems involve numerous interacting players, which causes theoretical analysis and even numerical study become impractical.  To overcome this issue, Huang et al. \cite{Huang} and Lasry et al. \cite{Lasry} borrowed the ideas arising from particle physics and introduced Mean-field Games theories and systems independently.  For their rich applications in economics, finance, management, etc, we refer the readers to \cite{Gue091}.

Focusing on the derivation of Mean-field Games systems, we assume that the $i$-th agent with $i=1,\cdots,n$ satisfies the following controlled stochastic differential equation (SDE):
\begin{align*}
dX_t^i=-\gamma^i_t dt+\sqrt{2}dB_t^i, \ \ X_0^i=x^i\in\mathbb R^N,
\end{align*}
where $x^i$ is the initial state, $\gamma^i_t$ denotes the controlled velocity and $B_t^i$ represent the independent Brownian motion. Suppose all agents are indistinguishable and minimize the following average cost:   
\begin{align}\label{longsenseexpectation}
J(\gamma_t):=\mathbb E\int_0^T[L(\gamma_t)+V(X_t)+f(m(X_t))] dt + u_T(X_T),
\end{align}
where $L$ is the Lagrangian, $V$ describes the spatial preference and function $f$ depends on the population density.  By applying the standard dynamic programming principle, the coupled PDE system consisting of Hamilton–Jacobi–Bellman equation and Fokker-Planck equation is formulated, in which the second equation characterize the distribution of the population.  The crucial assumption here is all agents are homogeneous and minimize the same cost (\ref{longsenseexpectation}).  Whereas, in some scenarios, the game processes involve several classes of players with distinct objectives and constraints.  Correspondingly, the distributions of games can not be modelled by classical Mean-field Games systems.  Motivated by this, multi-population Mean-field Games systems were proposed and the derivations of multi-population stationary problems used to describe Nash equilibria are shown in \cite{feleqi2013derivation}.  For some relevant results of the study of multi-population Mean-field Games systems, we refer the readers to \cite{huang2010large,nourian2013eps,cirant2015multi,MR4083905,MR3791252}.  We also mention that some general theories for the study of Mean-field Games systems can be found in \cite{bensoussan2013mean,cardaliaguet2010notes,MR2762362,MR4214773}.  Recently, with the consideration of a common noise, some researchers established the extended mean-field games systems and discussed some properties such as well-posedness \cite{lions2020extended,cardaliaguet2022first}. 

The objective of this paper is to study the following stationary two-population second order Mean-field Games system:
\begin{align}\label{ss1}
\left\{\begin{array}{ll}
-\Delta u_1+H(\nabla u_1)+\lambda_1=V_1(x)+f_1(m_1,m_2),&x\in\mathbb R^N,\\
\Delta m_1+\nabla\cdot(m_1\nabla H(\nabla u_1))=0,&x\in\mathbb R^N,\\
-\Delta u_2+H(\nabla u_2)+\lambda_2=V_2(x)+f_2(m_1,m_2),&x\in\mathbb R^N,\\
\Delta m_2+\nabla\cdot(m_2\nabla H(\nabla u_2))=0,&x\in\mathbb R^N,\\
\int_{\mathbb R^N}m_1\,dx=\int_{\mathbb R^N}m_2\,dx=1,
\end{array}
\right.
\end{align}
where $H:\mathbb R^N\rightarrow \mathbb R$ is a Hamiltonian, $(m_1,m_2)$ represents the population density, $(u_1,u_2)$ denotes the value function and $(f_1,f_2)$ is the coupling.  Here $V_i(x)$, $i=1,2$ are potential functions and $(\lambda_1,\lambda_2)\in\mathbb R\times \mathbb R$ denotes the Lagrange multiplier.  In particular, Hamiltonian $H$ is in general chosen as 
\begin{align}\label{MFG-H}
H(p)=C_H|p|^{\gamma}\text{ with }C_H>0\text{ and }\gamma>1.
\end{align}
In light of the definition, the corresponding Lagrangian is given by 
\begin{align*}
L={C_L}\vert\gamma\vert^{\gamma'},~~\gamma'=\frac{\gamma}{\gamma-1}>1,~C_L=\frac{1}{\gamma'}(\gamma C_H)^{\frac{1}{1-\gamma}}>0.
\end{align*}

 From the viewpoint of variational methods, the single population counterpart of (\ref{ss1}) has been studied intensively when the coupling $f$ is local and satisfies $f=-em^\alpha$ with constant $e>0$, see \cite{cesaroni2018concentration,cirant2023ergodic,cirant2024critical}.  In detail, there exists a mass critical exponent $\alpha=\alpha^*:=\frac{\gamma'}{N}$ such that only when $\alpha<\alpha^*$, the stationary problem admits ground states for any $e>0$.  Moreover, when $\alpha=\alpha^*,$ one can find $e^*>0$ such that the stationary Mean-field Games system has ground states only for $e<e^*$ \cite{cirant2024critical}.  In this paper, we shall extend the above results into two-species stationary Mean-field Games system (\ref{ss1}).  Similarly as in \cite {cirant2024critical}, we consider the mass critical exponent case and define 
 \begin{align}\label{alpha12betadef}
 f_1=-\alpha_1 m_1^{\frac{\gamma'}{N}}-\beta m_1^{\frac{\gamma'}{2N}-\frac{1}{2}}m_2^{\frac{1}{2}+\frac{\gamma'}{N}},~~f_2=-\alpha_2 m_2^{\frac{\gamma'}{N}}-\beta m_2^{\frac{\gamma'}{2N}-\frac{1}{2}}m_1^{\frac{1}{2}+\frac{\gamma'}{N}},
 \end{align}
 where $\alpha_i>0$, $i=1,2$ and $\beta$ measure the strengths of intra-population and inter-population interactions, respectively.  We shall employ the variational approach to classify the existence of ground states and analyze their asymptotic profiles to (\ref{ss1}) in terms of $\alpha_i$, $i=1,2$ and $\beta.$  Noting the forms of nonlinearities shown in (\ref{alpha12betadef}), we assume $\gamma'>N$ here and in the sequel for our analysis; otherwise the strong singularities might cause difficulties for finding ground states to (\ref{ss1}) while taking limits. It is an intriguing but challenging problem to explore the existence of global minimizers in the case of $1<\gamma'\leq N.$


By employing the variational methods, the existence of ground states to (\ref{ss1}) is associated with the following constrained minimization problem:
\begin{align}\label{problem1p1}
e_{\alpha_1,\alpha_2,\beta}=\inf_{(m_1,w_1,m_2,w_2)\in \mathcal K}\mathcal E(m_1,w_1,m_2,w_2),
\end{align}
where
\begin{align}\label{energy1p3}
    \mathcal E_{\alpha_1,\alpha_2,\beta}(m_1,w_1,m_2,w_2):=&\sum_{i=1,2}\bigg(C_L\int_{\mathbb R^N}\bigg| \frac{w_i}{m_i}\bigg|^{\gamma'}m_i\, dx+\int_{\mathbb R^N}V_im_i\,dx-\frac{N}{N+\gamma'}\alpha_i\int_{\mathbb R^N}m_i^{1+\frac{\gamma'}{N}}\,dx\bigg)\nonumber\\
    &-\frac{2\beta N}{N+\gamma'}\int_{\mathbb R^N}m_1^{\frac{1}{2}+\frac{\gamma'}{2N}}m_2^{\frac{1}{2}+\frac{\gamma'}{2N}}\,dx,
\end{align}
and $\mathcal K=\mathcal K_1\times \mathcal K_2$
with
\begin{align}\label{mathcalkidefined}
\mathcal K_i=\bigg\{&(m_i,w_i)\bigg|-\int_{\mathbb R^N}\nabla m_i\cdot\nabla \varphi\,dx+\int_{\mathbb R^N}w_i\cdot\nabla\varphi\,dx=0,~\forall\varphi\in C_c^{\infty}(\mathbb R^N),\nonumber\\
&m_i\in W^{1,\gamma'}(\mathbb R^N),~w_i\in L^1(\mathbb R^N),
~\int_{\mathbb R^N}m_i\,dx=1,~\int_{\mathbb R^N}V_im_i\,dx<+\infty,~m_i\geq 0\text{~a.e.~}\bigg\}
\end{align}
for $i=1,2$.  Due to the technical restriction of our analysis, we impose the following assumptions on potential functions $V_i(x)$ with $i=1,2$:
\begin{itemize}
    \item[(H1).] 
    \begin{align}\label{Vicondition1}
\inf_{x\in\mathbb R^N} V_i(x)=0,~V_i\in C^1(\mathbb R^N)\text{ and }\lim_{|x|\rightarrow +\infty}V_i(x)=+\infty;
\end{align}
    \item[(H2).] 
    \begin{align}\label{Vicondition2}
\liminf_{|x|\rightarrow+\infty}\frac{V_i(x)}{|x|^b}>0,~~\limsup_{|x|\rightarrow +\infty}\frac{V_i(x)}{e^{\delta |x|}}<+\infty\text{ with constants }b>0,~\delta>0.
\end{align}

\end{itemize}
Similarly as shown in \cite{cirant2024critical}, the existence of ground states to (\ref{ss1}) has a strong connection with the following minimization problem for the single species potential-free Mean-field Games System:
\begin{align}\label{GNinequalitybest}
    (M^*)^{\frac{\gamma'}{N}}=\inf_{(m,w)\in\mathcal A}\frac{\bigg(\int_{\mathbb R^N}C_L\big|\frac{w}{m}\big|^{\gamma'}m\,dx\bigg) \bigg(\int_{\mathbb R^N}m\,dx\bigg)^{\frac{\gamma'}{N}}}{\frac{1}{1+\frac{\gamma'}{N}}\int_{\mathbb R^N}m^{1+\frac{\gamma'}{N}}\,dx},
\end{align}
where
\begin{align*}
\mathcal A:=\bigg\{&(m,w)\in W^{1,\gamma'}(\mathbb R^N)\cap L^1(\mathbb R^N)\bigg|-\int_{\mathbb R^N}\nabla m\cdot\nabla\varphi\,dx+\int_{\mathbb R^N}w\cdot\nabla\varphi\,dx=0,~\forall \varphi\in C_c^\infty(\mathbb R^N),
\\
&0\leq m\not\equiv 0, ~\int_{\mathbb R^N}m|x|^b\,dx<+\infty\text{~with~}b>0\text{~given by \eqref{Vicondition2}~}\bigg\}.
\end{align*}
We would like to point out that it was shown in Theorem 1.2 \cite{cirant2024critical} that problem (\ref{GNinequalitybest}) is attainable and admits at least a minimizer satisfying
\begin{align}\label{equmpotentialfree}
\left\{\begin{array}{ll}
-\Delta u+C_H|\nabla u|^{\gamma}-\frac{\gamma'}{NM^*}=-m^{\frac{\gamma'}{N}},\\
\Delta m+C_H\gamma\nabla\cdot(m|\nabla u|^{\gamma-2}\nabla u)=0,~w=-C_H\gamma m|\nabla u|^{\gamma-2}\nabla u,\\
\int_{\mathbb R^N}m\,dx=M^*,~0<m<Ce^{-\delta_0 |x|},
\end{array}
\right.
\end{align}
where $\delta_0>0$ is some constant.  As a consequence, the following Gagliardo-Nirenberg type's inequality holds:  
\begin{align}\label{GNinequalityused}
\frac{N}{N+\gamma'}\int_{\mathbb R^N}m^{1+\frac{\gamma'}{N}}\,dx \leq \frac{1}{a^*}\bigg(C_L\int_{\mathbb R^N}\bigg|\frac{w}{m}\bigg|^{\gamma'}m\,dx\bigg)\bigg(\int_{\mathbb R^N}m\,dx\bigg)^{\frac{\gamma'}{N}}\  \forall\ (m,w)\in \mathcal A,
\end{align}
where $a^*:=(M^*)^{\frac{\gamma'}{N}}$.
With the aid of \eqref{GNinequalityused}, we shall establish several results for the existence and non-existence of global minimizers to (\ref{ss1}) and further study the blow-up behaviors of ground states in terms of $\alpha_i$, $i=1,2$ and $\beta$ defined in \eqref{alpha12betadef}.  We emphasize that $\alpha_i>0$, $i=1,2$ represent the self-focusing of the $i$-th component and $\beta>0$ denotes the attractive interaction, while $\beta<0$ represents the repulsive interaction.  

In the next subsection, we shall first state our existence results for attractive and repulsive interactions then discuss the corresponding blow-up profiles results.

\subsection{Main Results}

\begin{theorem}\label{thm11multi}
Suppose that $V_i(x)$ with $i=1,2$ satisfy (H1) and (H2) given by (\ref{Vicondition1}) and (\ref{Vicondition2}), respectively.  Define $a^*:=(M^*)^{\frac{\gamma'}{N}}$ with $M^*$ given in (\ref{equmpotentialfree}), then we have
\begin{itemize}
    \item[(i).] if $0<\alpha_1,\alpha_2<a^*$ and $-\infty<\beta<\beta_*:=\sqrt{(a^*-\alpha_1)(a^*-\alpha_2)}$, problem (\ref{problem1p1}) has at least one global minimizer $(m_{1,\textbf{a}},w_{1,\textbf{a}},m_{2,\textbf{a}},w_{2,\textbf{a}})\in \mathcal K$.  Correspondingly, there exists a solution $(m_{1,\textbf{a}},m_{2,\textbf{a}},u_{1,\textbf{a}},u_{2,\textbf{a}})\in W^{1,p}(\mathbb R^N)\times W^{1,p}(\mathbb R^N)\times C^2(\mathbb R^N)\times C^2(\mathbb R^N)$ with any $p>1$ and $(\lambda_{1,\textbf{a}},\lambda_{2,\textbf{a}})\in\mathbb R\times\mathbb R$ such that
    \begin{align}\label{ss1thm11}
\left\{\begin{array}{ll}
-\Delta u_1+C_H|\nabla u_1|^{\gamma}+\lambda_1=V_1(x)-\alpha_1 m_1^{\frac{\gamma'}{N}}-\beta m_1^{\frac{\gamma'}{2N}-\frac{1}{2}}m_2^{\frac{1}{2}+\frac{\gamma'}{N}},&x\in\mathbb R^N,\\
\Delta m_1+C_H\gamma\nabla\cdot(m_1|\nabla u_1|^{\gamma-2}\nabla u_1)=0,&x\in\mathbb R^N,\\
-\Delta u_2+C_H|\nabla u_2|^{\gamma}+\lambda_2=V_2(x)-\alpha_2 m_2^{\frac{\gamma'}{N}}-\beta m_2^{\frac{\gamma'}{2N}-\frac{1}{2}}m_1^{\frac{1}{2}+\frac{\gamma'}{N}},&x\in\mathbb R^N,\\
\Delta m_2+C_H\gamma\nabla\cdot(m_2|\nabla u_2|^{\gamma-2}\nabla u_2)=0,&x\in\mathbb R^N,\\
\int_{\mathbb R^N}m_1\,dx=\int_{\mathbb R^N}m_2\,dx=1;
\end{array}
\right.
\end{align}
\item[(ii). ] either $\alpha_1>a^*$ or $\alpha_2>a^*$ or $\beta>\beta^*:=\frac{2a^*-\alpha_1-\alpha_2}{2},$ problem (\ref{problem1p1}) has no minimizer.
\end{itemize}
\end{theorem}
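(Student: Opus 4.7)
The plan for part (i) is to show that $\mathcal E_{\alpha_1,\alpha_2,\beta}$ is bounded below and coercive on $\mathcal K$ by combining the Gagliardo--Nirenberg inequality \eqref{GNinequalityused} with a Cauchy--Schwarz/weighted AM--GM splitting of the inter-species coupling. Writing $K_i := C_L\int_{\mathbb R^N}|w_i/m_i|^{\gamma'} m_i\,dx$ and $P_i := \frac{N}{N+\gamma'}\int_{\mathbb R^N} m_i^{1+\gamma'/N}\,dx$, the unit-mass constraint reduces \eqref{GNinequalityused} to $P_i \leq K_i/a^*$, while Cauchy--Schwarz gives $\frac{N}{N+\gamma'}\int_{\mathbb R^N} m_1^{\frac12+\frac{\gamma'}{2N}} m_2^{\frac12+\frac{\gamma'}{2N}}\,dx \leq \sqrt{P_1 P_2}$. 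When $\beta\leq 0$ this coupling carries a favourable sign; when $0<\beta<\beta_*$, I would invoke the weighted AM--GM $2\sqrt{P_1 P_2}\leq tP_1 + t^{-1}P_2$ with $t\in(\beta/(a^*-\alpha_2),\,(a^*-\alpha_1)/\beta)$, which is nonempty \emph{precisely} because $\beta^2 < (a^*-\alpha_1)(a^*-\alpha_2)$. Assembling these estimates yields
\begin{align*}
\mathcal E_{\alpha_1,\alpha_2,\beta}(m_1,w_1,m_2,w_2) \geq c\,(K_1+K_2) + \int_{\mathbb R^N}V_1 m_1\,dx + \int_{\mathbb R^N}V_2 m_2\,dx
\end{align*}
with $c=c(\alpha_1,\alpha_2,\beta)>0$.

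For a minimizing sequence $\{(m_i^n, w_i^n)\}\subset\mathcal K$, the bound above provides uniform control on $K_i^n$ and on $\int V_i m_i^n\,dx$, and (H2) then forces tightness of $\{m_i^n\}$ in $L^1(\mathbb R^N)$. The flux--density relation in \eqref{mathcalkidefined} combined with the kinetic bound produces a uniform $W^{1,\gamma'}$ bound on $m_i^n$, so along a subsequence $m_i^n\rightharpoonup m_{i,\textbf{a}}$ weakly in $W^{1,\gamma'}$ and strongly in $L^{1+\gamma'/N}(\mathbb R^N)$ by Rellich on balls together with tightness. The fluxes $w_i^n\rightharpoonup w_{i,\textbf{a}}$ weakly, and joint convexity of $(m,w)\mapsto |w|^{\gamma'}/m^{\gamma'-1}$ gives weak lower semicontinuity of $K_i$; the remaining self- and cross-interaction terms pass to the limit by strong $L^{1+\gamma'/N}$ convergence together with H\"older. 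The limit quadruple lies in $\mathcal K$ and attains $e_{\alpha_1,\alpha_2,\beta}$. Lagrange multipliers $\lambda_{i,\textbf{a}}$ for the two mass constraints then yield \eqref{ss1thm11}, and the claimed $W^{1,p}\times C^2$ regularity follows from dual recovery of $u_{i,\textbf{a}}$ out of $(m_{i,\textbf{a}}, w_{i,\textbf{a}})$ plus elliptic bootstrapping of the Hamilton--Jacobi--Bellman and Fokker--Planck equations.

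For (ii), the plan is to exhibit sequences in $\mathcal K$ along which $\mathcal E\to-\infty$ using the optimizer $(\bar m,\bar w)$ of \eqref{GNinequalitybest}. Equality in \eqref{GNinequalityused} gives $A := C_L\int|\bar w/\bar m|^{\gamma'}\bar m\,dy = \frac{N}{N+\gamma'}\int \bar m^{1+\gamma'/N}\,dy =: B$ with $\int \bar m = M^*=(a^*)^{N/\gamma'}$. Define
\begin{align*}
m_\tau(x) := \frac{\tau^N}{M^*}\bar m\bigl(\tau(x-x_0)\bigr), \qquad w_\tau(x) := \frac{\tau^{N+1}}{M^*}\bar w\bigl(\tau(x-x_0)\bigr),
\end{align*}
so that $\int m_\tau = 1$. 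When $\beta>\beta^*$, pick any $x_0\in\mathbb R^N$ and use $(m_\tau, w_\tau)$ for both species; a direct scaling computation, with exponential decay of $\bar m$ and (H2) controlling the potentials, yields
\begin{align*}
\mathcal E = \frac{\tau^{\gamma'}B}{M^*}\!\left[2-\frac{\alpha_1+\alpha_2+2\beta}{a^*}\right] + V_1(x_0) + V_2(x_0) + o(\tau^{\gamma'}),
\end{align*}
and the bracket is strictly negative by the definition of $\beta^*$. When $\alpha_1>a^*$ (the case $\alpha_2>a^*$ is symmetric), instead fix any admissible $(m_2, w_2)\in\mathcal K_2$ with finite energy and rescale only species one; the dominant contribution is $\tau^{\gamma'}B(1-\alpha_1/a^*)/M^*<0$, while the cross coupling is $O(\tau^{(\gamma'-N)/2})$ (via an $L^\infty$ bound on $m_2^{\frac12+\frac{\gamma'}{2N}}$) and the species-two contribution is $O(1)$, both strictly sub-dominant since $\gamma'>N$.

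The principal obstacle is the compactness step in (i): a two-species concentration-compactness analysis must simultaneously exclude vanishing (ruled out by the trapping potentials via (H2)) and dichotomy (obstructed by the two unit-mass constraints), and then the joint coupling $\int m_1^{\frac12+\frac{\gamma'}{2N}} m_2^{\frac12+\frac{\gamma'}{2N}}\,dx$ must be passed to the limit jointly in both species. The sharpness of $\beta_*$ is intrinsic to the equality case in the Cauchy--Schwarz/AM--GM chain; closing the gap $\beta_*\leq\beta\leq\beta^*$, which is nontrivial when $\alpha_1\neq\alpha_2$, lies outside the scope of this statement.
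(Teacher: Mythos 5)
Your plan follows essentially the same route as the paper's proof. Part (i): the GN inequality \eqref{GNinequalityused} turns the kinetic term into control on $\int m_i^{1+\gamma'/N}\,dx$, and Cauchy--Schwarz together with AM--GM absorbs the cross term; your weighted AM--GM with a free parameter $t$ is a minor variant of the paper's direct inequality $(a^*-\alpha_1)P_1+(a^*-\alpha_2)P_2\geq 2\beta_*\sqrt{P_1P_2}$ in Lemma~\ref{lemma31existenceleastenergy}(i), and both give the needed coercivity. The existence step via the uniform $W^{1,\gamma'}\times L^{\gamma'}$ bound from Lemma~\ref{lemma21-crucial-cor}, tightness from the trapping potentials, strong $L^{1+\gamma'/N}$ convergence, and lower semicontinuity of the kinetic energy is exactly the paper's argument. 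Part (ii): scaling the GN optimizer as a test pair and tracking the $\tau^{\gamma'}$ leading order is exactly Lemma~\ref{lemma31existenceleastenergy}(ii); your $L^\infty$-based $O(\tau^{(\gamma'-N)/2})$ bound on the cross term is a valid alternative to the paper's H\"older bound $O(\tau^{\gamma'/2})$, and both are sub-dominant.

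Two cautions. First, the passage from the minimizer $(m_{1,\textbf{a}},w_{1,\textbf{a}},m_{2,\textbf{a}},w_{2,\textbf{a}})$ to a classical solution of \eqref{ss1thm11} is not a routine Lagrange-multiplier computation, and your plan compresses the hardest technical step into one clause. In the paper this is the separate Lemma~\ref{lemma32multimfg}: one freezes the second species, shows $(m_{1,\textbf{a}},w_{1,\textbf{a}})$ minimizes a linearized functional $\tilde J_1$ over the constraint set $\mathcal B_1$ (using convexity of the kinetic term), establishes via a Lagrangian min--max/Fenchel duality that $\min \tilde J_1$ equals the supremum of eigenvalues $\lambda$ for which the HJB equation admits a subsolution, invokes Lemma~\ref{lemma22preliminary} to produce $u_{1,\textbf{a}}$ at the optimal $\lambda_{1,\textbf{a}}$, and finally uses the equality case in the Legendre transform of $H$ to force $w_{1,\textbf{a}}=-C_H\gamma\, m_{1,\textbf{a}}|\nabla u_{1,\textbf{a}}|^{\gamma-2}\nabla u_{1,\textbf{a}}$ on $\{m_{1,\textbf{a}}>0\}$. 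Your phrase ``dual recovery of $u_{i,\textbf{a}}$'' gestures at this but must be expanded before the proof is complete. Second, a small conceptual misstatement in your closing paragraph: dichotomy of a minimizing sequence is ruled out by the confining potentials via the uniform bound on $\int V_i m_i^n\,dx$ (which, together with $V_i\to\infty$, gives tightness), not by the unit-mass constraints, which alone are perfectly compatible with mass splitting and separating. Your earlier derivation invokes (H2) for tightness, which is the correct mechanism; the closing remark attributes it to the wrong constraint.
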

Theorem \ref{thm11multi} indicates that when the self-focusing cofficients $\alpha_i$, $i=1,2$ are small and the interaction is repulsive, or attractive but with the weak effect, problem (\ref{problem1p1}) admits minimizers and correspondingly, there exist classical solutions to (\ref{ss1thm11}).  Whereas, if the self-focusing effects and the attractive interaction are strong, problem (\ref{problem1p1}) does not have any minimizer.  In fact, there are some gap regions for the existence results shown in Theorem \ref{thm11multi} since we have $\beta^*\geq \beta_*$ and the equality holds only when $\alpha_1=\alpha_2$.  It is also an interesting problem to explore the case of $\alpha_i<a^*$, $i=1,2$ and $\beta_*<\beta<\beta^*$.

Of concern one borderline case $\beta=\beta_*=\beta^*$ with $\alpha_1=\alpha_2<a^*$ shown in Theorem \ref{thm11multi}, we further obtain
\begin{theorem}\label{thm12}
Assume all conditions in Theorem \ref{thm11multi} hold and suppose $V_i(x)$, $i=1,2$ satisfy
\begin{align}\label{moreassumptionforthm12}
\inf_{x\in\mathbb R^N}(V_1(x)+V_2(x))=0.
\end{align}
Then if $\alpha:=\alpha_1=\alpha_2<a^*$ and $0<\beta=\beta^*=\beta_*=a^*-\alpha<a^*,$ we have problem (\ref{problem1p1}) has no minimizer.
\end{theorem}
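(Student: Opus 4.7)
The plan is to show simultaneously that $e_{\alpha,\alpha,a^*-\alpha}=0$ and that this value is not attained. For the lower bound, given any $(m_1,w_1,m_2,w_2)\in\mathcal K$, applying the Gagliardo--Nirenberg inequality \eqref{GNinequalityused} to each component (using $\int m_i\,dx=1$) gives $C_L\int|w_i/m_i|^{\gamma'}m_i\,dx\geq a^*\tfrac{N}{N+\gamma'}\int m_i^{1+\gamma'/N}\,dx$, while Young's inequality yields $2m_1^{1/2+\gamma'/(2N)}m_2^{1/2+\gamma'/(2N)}\leq m_1^{1+\gamma'/N}+m_2^{1+\gamma'/N}$. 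Combining these with $\alpha+(a^*-\alpha)=a^*$ makes all self-interaction terms cancel and leaves
\begin{align*}
\mathcal E_{\alpha,\alpha,a^*-\alpha}(m_1,w_1,m_2,w_2)\geq \int_{\mathbb R^N}V_1m_1\,dx+\int_{\mathbb R^N}V_2m_2\,dx\geq 0,
\end{align*}
where non-negativity comes from (H1).

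For the matching upper bound, I would use \eqref{moreassumptionforthm12} and (H1) to pick $x_0\in\mathbb R^N$ with $V_1(x_0)=V_2(x_0)=0$. Let $(\bar m,\bar w)$ be the GN minimizer of \eqref{GNinequalitybest} satisfying \eqref{equmpotentialfree}. For $\tau>0$ define the test sequence
\begin{align*}
m_{i,\tau}(x):=\frac{\tau^N}{M^*}\bar m(\tau(x-x_0)),\qquad w_{i,\tau}(x):=\frac{\tau^{N+1}}{M^*}\bar w(\tau(x-x_0)),\quad i=1,2.
\end{align*}
This self-similar scaling preserves the continuity equation and normalizes the mass to $1$, so $(m_{i,\tau},w_{i,\tau})\in\mathcal K_i$. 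Since $m_{1,\tau}\equiv m_{2,\tau}$, the kinetic and combined self-/inter-interaction terms are each of order $\tau^{\gamma'}$, and they cancel exactly because $(\bar m,\bar w)$ saturates \eqref{GNinequalityused} and $a^*=(M^*)^{\gamma'/N}$. The potential terms vanish as $\tau\to\infty$: a change of variables gives $\int V_im_{i,\tau}\,dx=\int V_i(x_0+y/\tau)\bar m(y)\,dy/M^*\to V_i(x_0)=0$ by dominated convergence, using the exponential decay of $\bar m$ in \eqref{equmpotentialfree} against the at-most-exponential growth of $V_i$ in (H2). Thus $\mathcal E_{\alpha,\alpha,a^*-\alpha}(m_{1,\tau},w_{1,\tau},m_{2,\tau},w_{2,\tau})\to 0$, proving $e_{\alpha,\alpha,a^*-\alpha}=0$.

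Non-attainment then follows by running the lower-bound chain of inequalities with a hypothetical minimizer $(m_1^0,w_1^0,m_2^0,w_2^0)\in\mathcal K$. Every inequality must be saturated: Young equality forces $m_1^0=m_2^0$ a.e., equality in \eqref{GNinequalityused} forces each $(m_i^0,w_i^0)$ to be (up to translation and the mass-preserving self-similar rescaling) the canonical minimizer $\bar m/M^*$, which is strictly positive on $\mathbb R^N$ by \eqref{equmpotentialfree}, and $\int V_im_i^0\,dx=0$ combined with $V_i\geq 0$ then forces $V_i\equiv 0$ a.e., contradicting $\lim_{|x|\to\infty}V_i(x)=+\infty$ in (H1). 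The main obstacle I anticipate is the rigorous characterization of the equality case in \eqref{GNinequalityused}, i.e.~that any $(m_i^0,w_i^0)$ achieving equality must inherit the positivity of the canonical minimizer; I expect this to follow from the scaling invariance of the GN ratio together with the existence of a positive minimizer in \eqref{equmpotentialfree}, or alternatively from extracting an Euler--Lagrange system for the saturated pair and invoking the strong maximum principle.
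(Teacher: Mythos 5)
Your proposal is correct and follows essentially the same route as the paper: the same self-similar test pair built from the GN minimizer for the upper bound, and the same lower-bound decomposition (your "GN $+$ Young" pair of inequalities is exactly the paper's splitting of the energy into a GN term and the nonnegative square $\int(m_1^{1/2+\gamma'/(2N)}-m_2^{1/2+\gamma'/(2N)})^2\,dx$). The step you flag as a potential gap — positivity of any pair achieving equality in \eqref{GNinequalityused} — is handled in the paper exactly as you anticipate: equality in GN makes $(m_i,w_i)$ a minimizer of \eqref{GNinequalitybest}, whose strict positivity follows from Morrey embedding and the strong maximum principle (cited to \cite{bernardini2023ergodic}).
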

Theorem \ref{thm12} demonstrates that when the self-focusing effects are subcritical but the attractive interaction is strong and under critical case, there is no minimizer to problem (\ref{problem1p1}).  Besides the borderline case discussed in Theorem \ref{thm12}, we also study the case of $\alpha_i=a^*$ for $i=1$ or $2$ and obtain
\begin{theorem}\label{thmfinalexistence}
Assume all conditions in Theorem \ref{thm11multi} hold.  If one of the following conditions holds: 
\begin{itemize}
    \item[(i).]$\alpha_1=\alpha_2=a^*$ and $-\infty<\beta\leq 0;$
    \item[(ii).] $\alpha_1=a^*$, $0<\alpha_2<a^*$ and $0\leq \beta\leq \beta^*=\frac{a^*-\alpha_2}{2},$
 
\end{itemize}
then we have problem (\ref{problem1p1}) does not admit any minimizer.
\end{theorem}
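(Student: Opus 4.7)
The plan is to combine the sharp Gagliardo--Nirenberg type inequality \eqref{GNinequalityused} with suitably rescaled copies of the potential-free MFG ground state recorded in \eqref{equmpotentialfree}. Set $a=\tfrac{1}{2}+\tfrac{\gamma'}{2N}$, so that $2a=1+\tfrac{\gamma'}{N}$, and let $(m,w)$ denote a ground state of \eqref{equmpotentialfree} with mass $M^*$; note that $m>0$ on $\mathbb R^N$ with exponential decay. Introducing
\begin{align*}
R_i(m_i,w_i):=C_L\int_{\mathbb R^N}\left|\frac{w_i}{m_i}\right|^{\gamma'}m_i\,dx-\frac{Na^*}{N+\gamma'}\int_{\mathbb R^N}m_i^{1+\gamma'/N}\,dx,
\end{align*}
the inequality \eqref{GNinequalityused} together with $\int m_i\,dx=1$ gives $R_i\geq 0$; the equality case forces $(m_i,w_i)$ to be a minimizer of \eqref{GNinequalitybest}, which by \eqref{equmpotentialfree} is necessarily strictly positive on $\mathbb R^N$.

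For Case (i), since $\alpha_1=\alpha_2=a^*$ the energy reads
\begin{align*}
\mathcal E(m_1,w_1,m_2,w_2)=R_1+R_2+\int V_1m_1\,dx+\int V_2m_2\,dx-\tfrac{2\beta N}{N+\gamma'}\int m_1^{a}m_2^{a}\,dx.
\end{align*}
Because $-\beta\geq 0$ and $V_i\geq 0$, all five terms are nonnegative, hence $\mathcal E\geq 0$. Any purported minimizer would make each term vanish; in particular $R_i=0$ would force $m_i$ strictly positive on $\mathbb R^N$, while $\int V_i m_i=0$ combined with $V_i\in C^1$, $V_i\geq 0$ and $V_i\to+\infty$ at infinity would force $V_i\equiv 0$ on $\mathbb R^N$, a contradiction.

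For Case (ii) with $\beta=0$ the functional decouples as $\mathcal E=\mathcal E_1(m_1,w_1)+\mathcal E_2(m_2,w_2)$. The same Gagliardo--Nirenberg dichotomy applied to the first component shows $\mathcal E_1>0$ everywhere on $\mathcal K_1$, while the concentrating family $m_1(x)=\tfrac{\tau^N}{M^*}m(\tau(x-x_0))$ with $x_0$ a global minimum of $V_1$ gives $\inf\mathcal E_1=0$; hence $\mathcal E_1$ has no minimizer. Coupled with the existence of a minimum for the subcritical single-population problem $\mathcal E_2$ (i.e.\ $\alpha_2<a^*$, a consequence of Theorem \ref{thm11multi}(i) applied with $\beta=0$ and $\alpha_1=\alpha_2$), the decoupling of the infimum rules out a joint minimizer.

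For Case (ii) with $\beta>0$, the plan is to show $e_{a^*,\alpha_2,\beta}=-\infty$. Fix a global minimum $x_0$ of $V_1$ and a smooth admissible pair $(\phi,\nabla\phi)\in\mathcal K_2$ with $\phi(x_0)>0$ (a normalized Gaussian works; (H2) makes both the potential and kinetic costs finite). Set $m_1(x)=\tfrac{\tau^N}{M^*}m(\tau(x-x_0))$ with the associated $w_1$, so that $R_1=0$ by the Gagliardo--Nirenberg extremality and $\int V_1m_1\,dx\to 0$ as $\tau\to\infty$. The change of variables $y=\tau(x-x_0)$ gives
\begin{align*}
\int m_1^{a}\phi^{a}\,dx=(M^*)^{-a}\,\tau^{(\gamma'-N)/2}\int m(y)^{a}\phi(x_0+y/\tau)^{a}\,dy,
\end{align*}
which tends to $+\infty$ as $\tau\to\infty$ by dominated convergence combined with the standing assumption $\gamma'>N$; since the remaining terms stay bounded, the $\beta$-coupling drives $\mathcal E\to-\infty$. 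The main subtlety is identifying the correct scaling exponent $(\gamma'-N)/2$ and verifying $(\phi,\nabla\phi)\in\mathcal K_2$, both of which use $\gamma'>N$ in an essential way; elsewhere the proof is a bookkeeping combining the Gagliardo--Nirenberg equality cases with the dichotomy $\mathcal E_1=0$ versus $m_1>0$.
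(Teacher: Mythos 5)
Your Case~(ii) arguments are sound and essentially parallel the paper's: for $\beta=0$ you correctly exploit the decoupling and the fact that $e^1_{a^*}$ is an unattained infimum, and for $\beta>0$ your scaling computation $\int m_1^a m_2^a\,dx\sim \tau^{(\gamma'-N)/2}$ with a fixed second component is the same mechanism the paper uses (the paper uses $(m_0,w_0)$ itself as the fixed pair rather than a Gaussian, but that is cosmetic).

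Case~(i), however, has a genuine gap. You show $\mathcal E\geq 0$ and then immediately assert that any minimizer must make each nonnegative term vanish; but this only follows if one first establishes that $e_{a^*,a^*,\beta}=0$, i.e.\ that the infimum is actually zero. You announce at the start that you will use rescaled copies of the potential-free ground state, but you never produce a test sequence for Case~(i). Moreover, the obvious choice $m_1=m_2=m_\tau$ concentrating at a common point fails whenever $\beta<0$: the coupling term $-\tfrac{2\beta N}{N+\gamma'}\int m_\tau^{2a}\,dx = |\beta|\tfrac{2N}{N+\gamma'}\int m_\tau^{1+\gamma'/N}\,dx$ scales like $|\beta|\,\tau^{\gamma'}\to+\infty$, so this sequence does not drive $\mathcal E$ to zero. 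The paper circumvents this by shifting the two bumps apart by $\pm\iota\,\nu\ln\tau/\tau$ (formula \eqref{scalingtestthm14}) so that, after rescaling, their centers separate at rate $\ln\tau$; combined with the exponential decay of $m_0$ from Lemma~\ref{mdecaylemma}, the overlap integral $\int m_{1,\tau}^a m_{2,\tau}^a\,dx$ is shown in \eqref{B8innotesmultimfg} to be $O(\tau^{\gamma'-c\delta_0\iota})\to 0$ for $\iota$ large, while the potential terms still converge to $V_1(\bar x_1)+V_2(\bar x_2)=0$. Without this (or some substitute) construction, you have only established $e\geq 0$, which does not preclude a minimizer at a positive energy level. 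Once the upper bound $e\leq 0$ is in place, your concluding argument (vanishing of all terms, strict positivity of $m_i$ from the equality case of \eqref{GNinequalityused}, contradiction with $V_i\to+\infty$) is correct and matches the paper's treatment of Theorem~\ref{thm12}.
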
 
\begin{remark}
We remark that when $\alpha_2=a^*$, $0<\alpha_1<a^*$ and $0\leq \beta\leq \beta^*$, (\ref{problem1p1}) also does not have any minimizer since $m_1$-population and $m_2$-population are symmetric in (\ref{ss1}),
\end{remark}
Theorem \ref{thmfinalexistence} shows that if one of self-focusing coefficients are critical, system (\ref{ss1}) does not admit the ground state.  We next summarize results for the study of blow-up profiles of ground states in some singular limits, in which two cases are concerned: attractive interactions with $\beta>0$ and repulsive ones with $\beta<0$.  Before stating our results, we give some preliminary notations. Define
\begin{align}\label{zerosdefinition20241005}
Z_i:=\{x|V_i(x)=0\}, i=1,2.
\end{align}
For any $p>0$,  we denote
\begin{align}\label{Hmoibarnupi}
H_{\bar m,p}(y):=\int_{\mathbb R^N}|x+y|^{p}\bar m(x)\,dx,\text{ and }\bar{\nu}_{p}:=\inf_{(\bar m,\bar w)\in\mathcal M}\inf_{y\in\mathbb R^N}H_{\bar m,p}(y),
\end{align}
with
\begin{align}\label{mathcalMdefinedbythm17}
\mathcal M:=\{(\bar m,\bar w)|\exists u\text{ such that }(\bar m,\bar w,u)\text{ satisfies }(\ref{equmpotentialfree})\text{ and }(\bar m,\bar w)\text{ is a minimizer of }(\ref{GNinequalitybest})\}.
\end{align}

The following two theorems address the attractive case with $(\alpha_1,\alpha_2)\nearrow (a^*-\beta,a^*-\beta)$ and $Z_1\cap Z_2\not=\emptyset$, which are
\begin{theorem}\label{thm13attractive}
Assume that $V_i(x)$ satisfies (\ref{Vicondition1}), (\ref{Vicondition2}) and $Z_1\cap Z_2\not=\emptyset$.  Let $0<\beta<a^*$, $0<\alpha_1,\alpha_2<a^*-\beta:=\alpha^*_{\beta}$, $(m_{1,\bf{a}},w_{1,\bf{a}},m_{2,\bf{a}},w_{2,\bf{a}})$ be a minimizer of $e_{\alpha_1,\alpha_2,\beta}$ with $\bf{a}:=(\alpha_1,\alpha_2)$ and $(m_{1,\bf{a}},u_{1,\bf{a}},m_{2,\bf{a}},u_{2,\bf{a}})$ be a solution of (\ref{ss1thm11}).  Define ${\bf {a}}^*_{\beta}:=(\alpha_{\beta}^*,\alpha_{\beta}^*)=(a^*-\beta,a^*-\beta),$ then as $\bf{a}\nearrow \bf{a}_{\beta}^*$, we have for $i=1,2,$ 
\begin{small}
\begin{align}\label{thm13conclusion1}
\lim_{\bf{a}\nearrow \bf{a}_{\beta}^*}\bigg(\int_{\mathbb R^N}C_L\bigg|\frac{w_{i,\bf{a}}}{m_{i,\bf{a}}}\bigg|^{\gamma'}m_{i,\bf{a}}\,dx-\frac{N(\alpha_i+\beta)}{N+\gamma'}
\int_{\mathbb R^N}m_{i,\bf{a}}^{1+\frac{\gamma'}{N}}\,dx\bigg)=0,
\end{align}
\end{small}
\begin{align}\label{thm13conclusion2}
\lim_{\bf{a}\nearrow \bf{a}^*_{\beta}}\int_{\mathbb R^N}V_1(x)m_{1,\bf{a}}+V_2(x)m_{2,\bf{a}}\,dx=0,~\lim_{\bf{a}\nearrow \bf{a}^*_{\beta}} \int_{\mathbb R^N}\bigg(m_{1,\bf{a}}^{\frac{1}{2}+\frac{\gamma'}{2N}}-m_{2,\bf{a}}^{\frac{1}{2}+\frac{\gamma'}{2N}}\bigg)^2\,dx=0,
\end{align}
\begin{align}\label{202401719conclu}
~\lim_{\bf{a}\nearrow \bf{a}^*_{\beta}}C_L\int_{\mathbb R^N}\bigg|\frac{w_{i,\bf{a}}}{m_{i,\bf{a}}}\bigg|^{\gamma'}m_{i,\bf{a}}\,dx\rightarrow +\infty \text{~for both~}i=1,2
\end{align}
and
\begin{align}\label{thm13conclusionpart23}
\lim_{\bf{a}\nearrow \bf{a}^*_{\beta}}\frac{\int_{\mathbb R^N}\big|\frac{w_{1,\bf{a}}}{m_{1,\bf{a}}}\big|^{\gamma'}m_{1,\bf{a}}\,dx}{\int_{\mathbb R^N}\big|\frac{w_{2,\bf{a}}}{m_{2,\bf{a}}}\big|^{\gamma'}m_{2,\bf{a}}\,dx}=1,~~\lim_{\bf{a}\nearrow \bf{a}_{\beta}^*}\frac{\int_{\mathbb R^N}m_{1,\bf{a}}^{1+\frac{\gamma'}{N}}\,dx}{\int_{\mathbb R^N}m_{2,\bf{a}}^{1+\frac{\gamma'}{N}}\,dx}=1.
\end{align}
Moreover, define
\begin{align}\label{defvarepsilon316}
\varepsilon:=\varepsilon_{\textbf{a}}:=\bigg(C_L\int_{\mathbb R^N}\bigg|\frac{w_{1,\textbf{a}}}{m_{1,\textbf{a}}}\bigg|^{\gamma'}m_{1,\textbf{a}}\bigg)^{-\frac{1}{\gamma'}}\rightarrow 0.
\end{align}
Let $x_{i,\varepsilon}$, $i=1,2$ be one global minimal point of $u_{i,\textbf{a}}$ and $y_{i,\varepsilon}$, $i=1,2$ be one global maximal point of $m_{i,\textbf{a}}$.  Then we have up to a subsequence $\exists x_0$ s.t. $V_1(x_0)=V_2(x_0)=0,$ and
\begin{align*}
x_{i,\varepsilon},y_{i,\varepsilon}\rightarrow x_0,\text{ as }\textbf{a}\nearrow \textbf{a}_{\beta}^*;
\end{align*}
moreover, we find
\begin{align}\label{x1varepsilonminusx2varepsilon}
\limsup_{\varepsilon\rightarrow 0^+}\frac{|x_{1,\varepsilon}-x_{2,\varepsilon}|}{\varepsilon}<+\infty,
\end{align}
and
\begin{align}\label{moreoverholdsfinal}
\limsup_{\varepsilon\rightarrow 0^+}\frac{|x_{i,\varepsilon}-y_{j,\varepsilon}|}{\varepsilon}<+\infty,~~i,j=1,2.
\end{align}
In addition, let
\begin{align}\label{scalingthm31profile}
u_{i,\varepsilon}:=\varepsilon^{\frac{2-\gamma}{\gamma-1}}u_{i,\textbf{a}}(\varepsilon x+x_{1,\varepsilon}),~m_{i,\varepsilon}:=\varepsilon^Nm_{i,\textbf{a}}(\varepsilon x+x_{1,\varepsilon}),~{{w}}_{i,\varepsilon}:=\varepsilon^{N+1}w_{i,\textbf{a}}(\varepsilon x+x_{1,\varepsilon}), 
\end{align}
then there exist $  u\in C^2(\mathbb R^N)$, $0\leq m\in W^{1,\gamma'}(\mathbb R^N)$, and ${{w}}\in L^{\gamma'}(\mathbb R^N)$  such that
\begin{align}\label{mlimiting20923}
u_{i,\varepsilon}\rightarrow u\text{~in~}C_{\text{loc}}^2(\mathbb R^N),~~m_{i,\varepsilon}\rightarrow m\text{~in~}L^p(\mathbb R^N),~\forall p\geq 1,~~{{w}}_{i,\varepsilon}\rightharpoonup {{w}}~\text{in~}L^{\gamma'}(\mathbb R^N),~i=1,2.
\end{align}
 In particular, $(m,{{w}})$ is a minimizer of problem \eqref{GNinequalitybest} and $(u,m,{{w}})$ solves
\begin{align}\label{satisfythm13eqsinglezeropotential}
\left\{\begin{array}{ll}
-\Delta u+C_H|\nabla u|^{\gamma}-\frac{\gamma'}{N}=-a^*m^{\frac{\gamma'}{N}},&x\in\mathbb R^N,\\
\Delta m+C_H\gamma\nabla\cdot(m|\nabla u|^{\gamma-2}\nabla u)=0,~{w}=-C_H\gamma |\nabla u|^{\gamma-2}\nabla u,&x\in\mathbb R^N,\\
\int_{\mathbb R^N}m\,dx=1.
\end{array}
\right.
\end{align}
\end{theorem}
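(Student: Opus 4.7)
My plan is to follow the single-species strategy of~\cite{cirant2024critical}, adapted to the coupled system, in four stages: (i) construct a concentrating test pair to obtain $e_{\mathbf{a}}\to 0$; (ii) extract from \eqref{GNinequalityused} and Young's inequality a gap-free lower-bound decomposition forcing \eqref{thm13conclusion1}--\eqref{thm13conclusion2}; (iii) rule out bounded kinetic energy by contradiction with Theorem~\ref{thm12}; (iv) perform a concentration-compactness and rescaling analysis yielding \eqref{mlimiting20923} together with the separation bounds \eqref{x1varepsilonminusx2varepsilon}--\eqref{moreoverholdsfinal}.

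\textbf{Stages 1 and 2 (energy balance).} Pick $x_0\in Z_1\cap Z_2$ and a minimizer $(\bar m,\bar w)\in\mathcal M$ of \eqref{GNinequalitybest}; normalize $(\hat m,\hat w):=(\bar m,\bar w)/M^*$ so that $\int\hat m\,dx=1$. Taking
\begin{align*}
m_{i,\tau}(x):=\tau^N\hat m\bigl(\tau(x-x_0)\bigr),\qquad w_{i,\tau}(x):=\tau^{N+1}\hat w\bigl(\tau(x-x_0)\bigr),\qquad i=1,2,
\end{align*}
as test pair, and using the GN-equality for $(\hat m,\hat w)$, the kinetic, self-focusing and coupling contributions combine into $\tau^{\gamma'}A_0(2a^*-\alpha_1-\alpha_2-2\beta)$ with $A_0:=\frac{N}{N+\gamma'}\int\hat m^{1+\gamma'/N}\,dx$, while $\int V_im_{i,\tau}\,dx\to 0$ as $\tau\to\infty$ (since $V_i(x_0)=0$ and $\hat m$ decays exponentially by \eqref{equmpotentialfree}). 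Optimizing $\tau$ yields the upper bound $e_{\mathbf{a}}\to 0$ as $\mathbf{a}\nearrow\mathbf{a}_\beta^*$. For the matching lower bound, set $p:=\frac{1}{2}+\frac{\gamma'}{2N}$ and $K_i:=C_L\int|w_{i,\mathbf{a}}/m_{i,\mathbf{a}}|^{\gamma'}m_{i,\mathbf{a}}\,dx$; combining the identity $m_1^pm_2^p=\frac{1}{2}(m_1^{2p}+m_2^{2p})-\frac{1}{2}(m_1^p-m_2^p)^2$ with \eqref{GNinequalityused} applied to each species (using $\int m_i\,dx=1$) gives
\begin{align*}
e_{\mathbf{a}}\;\geq\;\sum_{i=1,2}\Bigl(K_i-\tfrac{N(\alpha_i+\beta)}{N+\gamma'}\!\int\! m_{i,\mathbf{a}}^{1+\gamma'/N}\,dx\Bigr)\;+\;\sum_{i=1,2}\int V_im_{i,\mathbf{a}}\,dx\;+\;\tfrac{\beta N}{N+\gamma'}\!\int\!\bigl(m_{1,\mathbf{a}}^p-m_{2,\mathbf{a}}^p\bigr)^2\,dx,
\end{align*}
with each bracket bounded below by $\frac{a^*-\alpha_i-\beta}{a^*}K_i\geq 0$. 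Since the total is $\leq e_{\mathbf{a}}=o(1)$, every non-negative summand vanishes, which is precisely \eqref{thm13conclusion1} and \eqref{thm13conclusion2}.

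\textbf{Stage 3 (blow-up and ratios).} If $K_1$ stayed bounded along a subsequence, \eqref{thm13conclusion1} would keep $\int m_{1,\mathbf{a}}^{1+\gamma'/N}\,dx$ bounded, the vanishing symmetry defect together with $\int m_i=1$ would force $K_2$ bounded too, and $\int V_im_{i,\mathbf{a}}\to 0$ with $V_i\to+\infty$ at infinity prevents mass loss at infinity. Extracting weak/strong limits and using lower semicontinuity of the convex kinetic functional would produce an admissible $(m_\infty,w_\infty,m_\infty,w_\infty)\in\mathcal K$ minimizing $\mathcal E_{\mathbf{a}_\beta^*}$ at value $0$, contradicting Theorem~\ref{thm12} (whose hypothesis \eqref{moreassumptionforthm12} is automatic because $Z_1\cap Z_2\neq\emptyset$); hence \eqref{202401719conclu}. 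For \eqref{thm13conclusionpart23}, set $A_i:=\int m_{i,\mathbf{a}}^{1+\gamma'/N}\,dx$; Cauchy--Schwarz yields $\int(m_1^p-m_2^p)^2\geq(\sqrt{A_1}-\sqrt{A_2})^2$, so \eqref{thm13conclusion2} forces $A_1/A_2\to 1$, and then \eqref{thm13conclusion1} combined with $\alpha_i+\beta\to a^*$ gives $K_1/K_2\to 1$.

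\textbf{Stage 4 (concentration, rescaling, profile).} Since $\int V_im_{i,\mathbf{a}}\to 0$ and $V_i\to+\infty$ at infinity with compact $Z_i$, the mass of each $m_{i,\mathbf{a}}$ concentrates at $Z_i$; coupled with the vanishing symmetry defect, both species concentrate at a common $x_0\in Z_1\cap Z_2$, so $x_{i,\varepsilon},y_{i,\varepsilon}\to x_0$. Applying the rescaling \eqref{scalingthm31profile} centered at $x_{1,\varepsilon}$, uniform $C^{2,\theta}_{\mathrm{loc}}$ bounds for the rescaled HJB equations (via regularity for the degenerate $|\nabla u|^{\gamma}$-operator) and $W^{1,\gamma'}$-compactness of the rescaled densities, together with exponential decay inherited from \eqref{equmpotentialfree}, deliver the convergences \eqref{mlimiting20923}; the asymptotic sharpness of Stage 2 identifies $(m,w)$ as a minimizer of \eqref{GNinequalitybest} and forces $\varepsilon^{\gamma'}\lambda_{i,\mathbf{a}}\to\gamma'/N$ via integration of the rescaled HJB against $m$, giving \eqref{satisfythm13eqsinglezeropotential}. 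Finally, the rescaling converts \eqref{thm13conclusion2} into $\int(m_{1,\varepsilon}^p-m_{2,\varepsilon}^p)^2\,dy\to 0$ (using $2Np-N=\gamma'$ and $\varepsilon^{\gamma'}K_1=1$): if $|x_{1,\varepsilon}-x_{2,\varepsilon}|/\varepsilon\to\infty$, the two rescaled profiles would have asymptotically disjoint supports, contradicting this vanishing and proving \eqref{x1varepsilonminusx2varepsilon}; the bound \eqref{moreoverholdsfinal} follows analogously by comparing $y_{j,\varepsilon}$ to $x_{i,\varepsilon}$ through localization of maxima of the rescaled densities. The principal technical obstacle is this final stage: securing uniform $C^{2,\theta}_{\mathrm{loc}}$ regularity for the rescaled coupled HJB--FP system (the Hamiltonian is degenerate and the coupling contains a negative power of $m_i$) and upgrading weak to strong $L^p(\mathbb R^N)$ convergence globally without mass loss at infinity.
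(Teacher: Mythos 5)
Your four-stage plan mirrors the paper's proof quite closely: the energy decomposition via $m_1^pm_2^p=\tfrac12(m_1^{2p}+m_2^{2p})-\tfrac12(m_1^p-m_2^p)^2$, the termwise GN lower bound, the contradiction with Theorem~\ref{thm12} (observing that $Z_1\cap Z_2\neq\emptyset$ supplies \eqref{moreassumptionforthm12}), and the rescaling/regularity argument are all the same. Stages 1--3 are essentially identical to the published argument, and the Lagrange-multiplier asymptotics $\varepsilon^{\gamma'}\lambda_{i,\mathbf{a}}\to-\gamma'/N$ are also how the paper pins down \eqref{satisfythm13eqsinglezeropotential}.

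The one place you genuinely deviate is the proof of \eqref{x1varepsilonminusx2varepsilon}. The paper evaluates the $u_2$-HJB equation at its minimum point $x_{2,\varepsilon}$, uses the maximum principle to get a positive lower bound on $\alpha_2 m_{2,\varepsilon}^{\gamma'/N}+\beta m_{2,\varepsilon}^{\gamma'/(2N)-1/2}m_{1,\varepsilon}^{1/2+\gamma'/N}$ evaluated at $(x_{2,\varepsilon}-x_{1,\varepsilon})/\varepsilon$, and then invokes the uniform decay of $m_{i,\varepsilon}$ at infinity (from $L^1$-convergence and uniform H\"older bounds) to conclude the separation is $O(\varepsilon)$. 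You instead argue from the vanishing rescaled symmetry defect: you note that $\int(m_{1,\varepsilon}^p-m_{2,\varepsilon}^p)^2\to 0$ while both $\int m_{i,\varepsilon}^{2p}\to\tfrac{N+\gamma'}{Na^*}>0$, so the overlap $\int m_{1,\varepsilon}^p m_{2,\varepsilon}^p$ must converge to the same positive value; if $|x_{1,\varepsilon}-x_{2,\varepsilon}|/\varepsilon\to\infty$ the overlap would vanish (again by the uniform decay of one factor), giving a contradiction. This is a valid alternative that avoids invoking the HJB comparison at $x_{2,\varepsilon}$, though both routes lean on the same uniform spatial decay input. You should be aware of one thing your sketch glosses over: your conclusion \eqref{mlimiting20923} asserts a single limit profile $u$ for both populations, but the local $C^2$ limits of $u_{1,\varepsilon}$ and $u_{2,\varepsilon}$ are a priori different solutions of the same limiting MFG system with the same $m$; the paper closes this gap with the classical Lasry--Lions uniqueness trick (testing the difference equations against $m_1-m_2$ and $u_1-u_2$ and using strict convexity of $|p|^\gamma$), and you will need that step as well.
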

Theorem \ref{thm13attractive} implies that as $(\alpha_1,\alpha_2)\nearrow (a^*-\beta,a^*-\beta)$, there are concentration phenomena in the multi-population Mean-field Games system (\ref{ss1}) with attractive interactions under the mass critical exponent case.  In addition, the basic blow-up profiles of ground states are given in Theorem \ref{thm13attractive}. 
 Moreover, by imposing the local polynomial expansion on potential functions, we obtain the following results of refined blow-up profiles:
\begin{theorem}\label{thm16refinedblowup}
Assume all conditions in Theorem \ref{thm13attractive} hold.  Suppose that  $V_1(x)$ and $V_2(x)$ have $l$ common global minimum points, i.e., $Z_1\cap Z_2=\{x_1,\cdots,x_l\in\mathbb R^N\}$, and there exist $ d>0$, $a_{ij}>0$, $p_{ij}>0$ with $i=1,2$, $j=1,\cdots,l$ such that 
    \begin{align}\label{Vi125negativebeta2024}
    V_i=a_{ij}|x-x_j|^{p_{ij}}+O(|x-x_j|^{p_{ij}+1})\text{ for }0<|x-x_j|<d.
    \end{align}
Let $p_j:=\min\{p_{1j},p_{2j}\}$, $p_0:=\max_{1\leq j\leq l}p_j $ and
\begin{align}\label{mujthm1point5}
\mu_j=\lim_{x\rightarrow x_j}\frac{V_1(x)+V_2(x)}{|x-x_j|^{p_j}}=&\left\{\begin{array}{ll}
a_{1j},\text{ if }p_{1j}<p_{2j},\\
a_{1j}+a_{2j},\text{ if }p_{1j}=p_{2j},\\
a_{2j},\text{ if }p_{1j}>p_{2j}.
\end{array}
\right.
\end{align}
Define 
\begin{align}\label{eq-Z0}
    Z_0=\{x_j|x_j\in\bar Z\text{ and }\mu_j=\mu\}\text{ with
$\bar Z:=\{x_j|p_{j}=p_0,~j=1,\cdots,l\}$ and  $\mu:=\min\{\mu_j|x_j\in \bar Z\}$.}
\end{align}
Let 
$(u_{i,\varepsilon},m_{i,\varepsilon},w_{i,\varepsilon})$, $i=1,2$ be given as (\ref{scalingthm31profile}).  
 Then we have
\begin{align}\label{136refinedbetanegativenoteshold}
\lim_{\varepsilon\rightarrow 0^+}\frac{\varepsilon}{\big(\frac{2\gamma'}{p_0\mu\bar{\nu}_{p_0} a^*}\big)^{\frac{1}{\gamma'+p_0}}\big(a^*-\frac{\alpha_1+\alpha_2+2\beta}{2}\big)^{\frac{1}{\gamma'+p_0}}}=1,
\end{align}
and 
\begin{align}\label{136refinedbetanegativenoteshold2}
\frac{x_{1,\varepsilon}-x_0}{\varepsilon}\rightarrow y_0\text{ with }x_0\in Z_0\text{ and $y_0\in\mathbb{R}^N$} satisfying H_{m,p_0}(y_0)=\bar\nu_{p_0},
\end{align}
where $m$ and $\bar \nu_{p_0}$ are given in (\ref{mlimiting20923}) and (\ref{Hmoibarnupi}), respectively.
\end{theorem}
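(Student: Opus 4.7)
The plan is to establish matching upper and lower asymptotic bounds on $e_{\alpha_1,\alpha_2,\beta}$ as $\mathbf{a}\nearrow\mathbf{a}_\beta^*$, and to extract the sharp scale $\varepsilon$ and limiting translation $y_0$ from the matching. For the upper bound, I would use a trial function built from a minimizer $(m,w)\in\mathcal M$ of the Gagliardo--Nirenberg problem \eqref{GNinequalitybest}: for any $x_j\in Z_1\cap Z_2$, $y\in\mathbb R^N$, and a parameter $\tau>0$, set $\bar m_i(x)=\tau^N m(\tau(x-x_j)-y)$ and $\bar w_i(x)=\tau^{N+1}w(\tau(x-x_j)-y)$ for both $i=1,2$. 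A Pohozaev-type identity derived from \eqref{satisfythm13eqsinglezeropotential} (multiply the $u$-equation by $m$, integrate by parts, and use $(\gamma-1)\gamma'=\gamma$) yields $\int_{\mathbb R^N}C_L|w/m|^{\gamma'}m\,dx=1$ and $\int_{\mathbb R^N}m^{1+\gamma'/N}\,dx=(N+\gamma')/(Na^*)$, so the kinetic-plus-nonlinear part of $\mathcal E(\bar m_1,\bar w_1,\bar m_2,\bar w_2)$ becomes $\tfrac{2\tau^{\gamma'}}{a^*}\bigl(a^*-\tfrac{\alpha_1+\alpha_2+2\beta}{2}\bigr)$. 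The potential part, after changing variables and expanding $V_i$ via \eqref{Vi125negativebeta2024}, equals $\mu_j\tau^{-p_j}H_{m,p_j}(y)(1+o(1))$ as $\tau\to\infty$. Optimizing over $x_j\in Z_0$, over $y$ achieving $\bar\nu_{p_0}$, and over $\tau>0$ produces the sharp upper bound with exponent $p_0/(p_0+\gamma')$ on $a^*-(\alpha_1+\alpha_2+2\beta)/2$ and leading coefficient involving $\mu\bar\nu_{p_0}$.

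For the matching lower bound, I would evaluate $\mathcal E$ at the actual minimizer and use the rescaling of Theorem \ref{thm13attractive}. By \eqref{mlimiting20923}, \eqref{defvarepsilon316}, \eqref{satisfythm13eqsinglezeropotential} and the same Pohozaev identity applied to the limit $(m,w)$, the kinetic minus self- and cross-nonlinear contributions of the minimizer equal $\varepsilon^{-\gamma'}\tfrac{2a^*-\alpha_1-\alpha_2-2\beta}{a^*}(1+o(1))$. The potential part is $\sum_i\int_{\mathbb R^N}V_i(\varepsilon z+x_{1,\varepsilon})m_{i,\varepsilon}(z)\,dz$. A crucial intermediate step is to use the upper bound together with \eqref{Vi125negativebeta2024} to deduce that $(x_{1,\varepsilon}-x_0)/\varepsilon$ is bounded along a subsequence (otherwise $|z+(x_{1,\varepsilon}-x_0)/\varepsilon|^{p_{ij}}$ would force the potential to exceed the upper bound). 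Passing to a further subsequence with $(x_{1,\varepsilon}-x_0)/\varepsilon\to y^*$ and applying Fatou (using the exponential decay of $m$ inherited from \eqref{equmpotentialfree}) yields
\begin{equation*}
e_{\alpha_1,\alpha_2,\beta}\;\ge\;\varepsilon^{-\gamma'}\tfrac{2a^*-\alpha_1-\alpha_2-2\beta}{a^*}(1+o(1))\;+\;\mu_j H_{m,p_j}(y^*)\varepsilon^{p_j}(1+o(1)),
\end{equation*}
where $p_j,\mu_j$ correspond to the limiting concentration point $x_0=x_j\in Z_1\cap Z_2$.

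Minimizing the right-hand side in $\varepsilon$ produces a bound of the form $C(p_j)(\mu_j H_{m,p_j}(y^*))^{\gamma'/(p_j+\gamma')}\bigl(a^*-\tfrac{\alpha_1+\alpha_2+2\beta}{2}\bigr)^{p_j/(p_j+\gamma')}$. Because $p\mapsto p/(p+\gamma')$ is strictly increasing and $a^*-(\alpha_1+\alpha_2+2\beta)/2\to 0^+$, compatibility with the upper bound forces $p_j=p_0$ (so $x_0\in\bar Z$), $\mu_j=\mu$ (so $x_0\in Z_0$), and $H_{m,p_0}(y^*)=\bar\nu_{p_0}$ (so $y^*$ is an optimizer in the definition of $\bar\nu_{p_0}$); this establishes \eqref{136refinedbetanegativenoteshold2}. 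The same matching pins down the actual $\varepsilon$ to the minimizer of the right-hand side, yielding $\varepsilon^{p_0+\gamma'}=\tfrac{2\gamma'}{p_0\mu\bar\nu_{p_0}a^*}\bigl(a^*-\tfrac{\alpha_1+\alpha_2+2\beta}{2}\bigr)(1+o(1))$, which is \eqref{136refinedbetanegativenoteshold}.

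The principal obstacle I anticipate is the joint control of the concentration centres of the two populations and the identification of the correct concentration set $Z_0$: proving boundedness of $(x_{1,\varepsilon}-x_0)/\varepsilon$ requires careful bookkeeping of the lowest-order potential contributions from both $V_1$ and $V_2$ under \eqref{Vi125negativebeta2024}, while ruling out concentration at non-flattest minima relies on the strict monotonicity of $p\mapsto p/(p+\gamma')$ together with tight tracking of the leading constants. The convergence of both $m_{1,\varepsilon}$ and $m_{2,\varepsilon}$ to the same profile $m$ from \eqref{mlimiting20923}, combined with \eqref{x1varepsilonminusx2varepsilon}, is what effectively reduces the coupled asymptotic analysis to that of a single equation, after which the matching of the sharp constants becomes a calculus exercise in optimizing $A\varepsilon^{-\gamma'}+B\varepsilon^{p_0}$.
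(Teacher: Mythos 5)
Your proposal follows the same two-sided energy-matching strategy as the paper: Lemma \ref{lemma4120240724} provides the upper bound via the rescaled trial pair built from a minimizer of \eqref{GNinequalitybest}, and the lower bound in the theorem's proof uses the rescaled minimizer, the Pohozaev identities from Lemma \ref{poholemma}, and a Fatou-type argument to force the boundedness of $(x_{1,\varepsilon}-x_j)/\varepsilon$, identification $p_j=p_0$, $\mu_j=\mu$, and optimization of $A\varepsilon^{-\gamma'}+B\varepsilon^{p_0}$. Your formulation separates the boundedness step from the verification $p_j=p_0$ and uses the strict monotonicity of $p\mapsto p/(p+\gamma')$, whereas the paper combines both into a single claim \eqref{claimattractiverefined} proved by showing that the rescaled potential term $\varepsilon^{-p_0}\int V_1 m_{1,\varepsilon}\,dx$ diverges in either case; these are cosmetically different renderings of the same contradiction, and both are correct.
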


Next, we discuss the blow-up profiles of ground states to (\ref{ss1}) under repulsive interactions.  We remark that on one hand, one has shown in Theorem \ref{thm11multi} that (\ref{ss1}) admits ground states when $0<\alpha_1,\alpha_2<a^*$ and $\beta\leq 0;$ on the other hand, Theorem \ref{thmfinalexistence} indicates that (\ref{problem1p1}) does not have any minimizer when $\alpha_1=\alpha_2=a^*$ and $\beta\leq 0$.  Similarly as discussed in the proof of Theorem \ref{thm13attractive}, we investigate the concentration phenomena in (\ref{ss1}) with repulsive interactions and obtain
\begin{theorem}\label{thm15blowupnegative}
Assume that  $V_i(x)$ with $i=1,2$ satisfy (H1) and (H2) given by (\ref{Vicondition1}) and (\ref{Vicondition2}), respectively.  Suppose 
\begin{align}\label{c26notesbetanegative}
Z_1\cap Z_2=\emptyset,
\end{align}
where $Z_1$ and $Z_2$ are given by \eqref{zerosdefinition20241005}.  Let $\beta< 0$, $0<\alpha_1,\alpha_2<a^*$, $(m_{1,\bf{a}},w_{1,\bf{a}},m_{2,\bf{a}},w_{2,\bf{a}})$ be a minimizer of $e_{\alpha_1,\alpha_2,\beta}$ with $\bf{a}:=(\alpha_1,\alpha_2)$ and $(m_{1,\bf{a}},u_{1,\bf{a}},m_{2,\bf{a}},u_{2,\bf{a}})$ be a solution of (\ref{ss1thm11}).  Define ${\bf {a}}^*:=(a^*,a^*),$ then we have as $\bf{a}\nearrow \bf{a}^*$,
\begin{align}\label{C7notesnegative}
\lim_{\textbf{a}\nearrow\textbf{a}^*}\bigg(\int_{\mathbb R^N}C_L\bigg|\frac{w_{i,\textbf{a}}}{m_{i,\textbf{a}}}\bigg|^{\gamma'}m_{i,\textbf{a}}\,dx+\int_{\mathbb R^N}V_im_{i,\textbf{a}}\,dx-\frac{N\alpha_i}{N+\gamma'}\int_{\mathbb R^N}m_{i,\textbf{a}}^{1+\frac{\gamma'}{N}}\,dx\bigg)=0;
\end{align}
\begin{align}\label{C8notesnegative}
\lim_{\textbf{a}\nearrow \textbf{a}^*}\int_{\mathbb R^N} m_{1,\textbf{a}}^{\frac{1}{2}+\frac{\gamma'}{2N}}m_{2,\textbf{a}}^{\frac{1}{2}+\frac{\gamma'}{2N}}\,dx=0,
\end{align}
and
\begin{align}\label{C9notesnegative}
\int_{\mathbb R^N}V_1m_{1,\textbf{a}}+V_2m_{2,\textbf{a}}\,dx\rightarrow 0;
\end{align}
\begin{align}\label{C10betanegative}
C_L\int_{\mathbb R^N}\big|\frac{w_{i,\textbf{a}}}{m_{i,\textbf{a}}}\big|^{\gamma'}m_{i,\textbf{a}}\,dx\rightarrow+\infty,~~\int_{\mathbb R^N}m_{i,\textbf{a}}^{1+\frac{\gamma'}{N}}\,dx\rightarrow+\infty,~~i=1,2.
\end{align}
Moreover, define
\begin{align*}
\hat\varepsilon_{i}:=\bigg(C_L\int_{\mathbb R^N}\bigg|\frac{w_{i,\textbf{a}}}{m_{i,\textbf{a}}}\bigg|^{\gamma'}m_{i,\textbf{a}}\,dx\bigg)^{-\frac{1}{\gamma'}}\rightarrow 0\text{~as~}\textbf{a}\nearrow \textbf{a}^*,~i=1,2.
\end{align*}
Let $x_{i,\hat\varepsilon}$, $i=1,2$ be a global minimum point of $u_{i,\bf{a}}$ and
\begin{align}\label{hatepsilonrescaling20240723}
m_{i,\hat\varepsilon}=\hat \varepsilon^N_im_{i,\textbf{a}}(\hat \varepsilon_i x+x_{i,\hat \varepsilon}),~~w_{i,\hat\varepsilon}=\hat\varepsilon_i^{N+1}w_{i,\textbf{a}}( \hat \varepsilon_i x+x_{i,\hat \varepsilon}),~u_{i,\hat \varepsilon}=\hat \varepsilon_i^{\frac{2-\gamma}{\gamma-1}}u_{i,\textbf{a}}(\hat \varepsilon_i x+x_{i,\hat \varepsilon}),
\end{align}
then there exist $  (u_i,m_i, w_i)\in C^2(\mathbb R^N)\times W^{1,\gamma'}(\mathbb R^N)\times L^{\gamma'}(\mathbb R^N)$ with $i=1,2$ 
such that
\begin{align}\label{mlimiting0923}
u_{i,\varepsilon}\rightarrow u_i\text{~in~}C_{\text{loc}}^2(\mathbb R^N),~~m_{i,\varepsilon}\rightarrow m_i\text{~in~}L^p(\mathbb R^N),~\forall p\geq 1,~~{{w}}_{i,\varepsilon}\rightharpoonup {{w}}_i~\text{in~}L^{\gamma'}(\mathbb R^N),~i=1,2.
\end{align}
In particular,  $(m_i,u_i,w_i)$, $i=1,2$ both solve system
\eqref{satisfythm13eqsinglezeropotential}.

\end{theorem}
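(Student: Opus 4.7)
The strategy is to adapt the proof of Theorem \ref{thm13attractive} to the repulsive regime, exploiting two structural simplifications. First, since $\beta<0$, the cross term $-\tfrac{2\beta N}{N+\gamma'}\int m_{1,\textbf{a}}^{\frac{1}{2}+\frac{\gamma'}{2N}}m_{2,\textbf{a}}^{\frac{1}{2}+\frac{\gamma'}{2N}}$ in \eqref{energy1p3} is non-negative, so it serves as a favourable term in the lower bound rather than a coupling one must carefully offset. Second, since $Z_1\cap Z_2=\emptyset$, placing test profiles at disjoint zeros of $V_1$ and $V_2$ makes the same cross term exponentially small in the upper bound, so the two populations effectively decouple both above and below.

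I would first show $e_{\alpha_1,\alpha_2,\beta}\to 0$ as $\textbf{a}\nearrow\textbf{a}^\ast$ by choosing $x_i^\ast\in Z_i$ with $x_1^\ast\neq x_2^\ast$ and constructing test pairs $(\bar m_{i,\sigma},\bar w_{i,\sigma})(x):=(\sigma^N\bar m(\sigma(x-x_i^\ast)),\sigma^{N+1}\bar w(\sigma(x-x_i^\ast)))$ built from a mass-$1$ rescaling of a minimizer $(\bar m,\bar w)$ of \eqref{GNinequalitybest}. The GN identity for $(\bar m,\bar w)$, the growth condition \eqref{Vicondition2}, and the exponential decay from \eqref{equmpotentialfree} yield $\mathcal{E}\leq \sum_i(1-\alpha_i/a^\ast)K_i\sigma^{\gamma'}+o(1)+Ce^{-c\sigma|x_1^\ast-x_2^\ast|}$; optimising in $\sigma$ completes the upper bound. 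For the matching lower bound, decompose $e_{\alpha_1,\alpha_2,\beta}=\mathcal{E}_1(\textbf{a})+\mathcal{E}_2(\textbf{a})+\mathcal{C}(\textbf{a})$, where $\mathcal{E}_i(\textbf{a}):=C_L\int|w_{i,\textbf{a}}/m_{i,\textbf{a}}|^{\gamma'}m_{i,\textbf{a}}+\int V_im_{i,\textbf{a}}-\tfrac{N\alpha_i}{N+\gamma'}\int m_{i,\textbf{a}}^{1+\gamma'/N}$ and $\mathcal{C}(\textbf{a})$ is the cross term. Applying \eqref{GNinequalityused} with $\int m_{i,\textbf{a}}=1$ gives $\mathcal{E}_i\geq (1-\alpha_i/a^\ast)\,C_L\int|w_{i,\textbf{a}}/m_{i,\textbf{a}}|^{\gamma'}m_{i,\textbf{a}}+\int V_im_{i,\textbf{a}}\geq 0$, while $\beta<0$ gives $\mathcal{C}\geq 0$. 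Since the three non-negative summands sum to $e_{\alpha_1,\alpha_2,\beta}\to 0$, each vanishes, producing \eqref{C7notesnegative}, \eqref{C8notesnegative}, \eqref{C9notesnegative} in one stroke. For \eqref{C10betanegative} I would argue by contradiction: if $C_L\int|w_{i,\textbf{a}}/m_{i,\textbf{a}}|^{\gamma'}m_{i,\textbf{a}}$ stayed bounded along a subsequence, the Morrey embedding ($\gamma'>N$) would supply uniform Hölder bounds on $m_{i,\textbf{a}}$; tightness from $\int V_im_{i,\textbf{a}}\to 0$ together with \eqref{Vicondition2} yields a continuous non-negative unit-mass limit supported on $Z_i$, contradicting the fact that this limit must be a rescaled GN extremizer (by the vanishing of the gap $(1-\alpha_i/a^\ast)\,C_L\int|w/m|^{\gamma'}m\to 0$) and hence have full positive support.

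For the concentration profiles, I would rescale each population separately via \eqref{hatepsilonrescaling20240723} around a minimum $x_{i,\hat\varepsilon}$ of $u_{i,\textbf{a}}$, so that $C_L\int|w_{i,\hat\varepsilon}/m_{i,\hat\varepsilon}|^{\gamma'}m_{i,\hat\varepsilon}=1$ by construction. Uniform $C^2$ estimates on $u_{i,\hat\varepsilon}$ from MFG regularity theory together with $W^{1,\gamma'}$ bounds on $m_{i,\hat\varepsilon}$ provide the compactness needed for \eqref{mlimiting0923}. In the limit, the rescaled potential $\hat\varepsilon_i^{\gamma'}V_i(\hat\varepsilon_ix+x_{i,\hat\varepsilon})$ vanishes locally uniformly (since $x_{i,\hat\varepsilon}\to Z_i$ by \eqref{C9notesnegative} and $\hat\varepsilon_i\to 0$); the rescaled repulsive coupling vanishes on compacts because $|x_{1,\hat\varepsilon}-x_{2,\hat\varepsilon}|$ stays bounded below by a positive constant (thanks to $Z_1\cap Z_2=\emptyset$) while $\hat\varepsilon_i\to 0$, so the partner population's rescaled density lies at distance $\gg 1$ from the origin and its exponential tails make its contribution vanish on compacts; the scaled Lagrange multiplier $\hat\varepsilon_i^{\gamma'}\lambda_{i,\textbf{a}}$ tends to $\gamma'/N$ exactly as in the corresponding step for Theorem \ref{thm13attractive}. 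Passing to the limit in \eqref{ss1thm11} then produces \eqref{satisfythm13eqsinglezeropotential} for each $i=1,2$.

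The principal obstacles are two-fold. First, proving \eqref{C10betanegative} requires a rigidity argument for near-extremizers of \eqref{GNinequalityused} at fixed unit mass, showing that any subsequential limit with finite kinetic energy and vanishing GN gap is a rescaled GN ground state and so cannot concentrate on $Z_i\subsetneq\mathbb{R}^N$. Second, decoupling the repulsive coupling in the rescaled system rests on quantitative exponential tail estimates for $m_{j,\hat\varepsilon}$ outside a neighbourhood of its concentration centre; these should follow from the Fokker--Planck equation in \eqref{ss1thm11} via maximum-principle or Moser-type arguments as in \cite{cirant2024critical}. Once these inputs are in place, the remainder of the blow-up analysis parallels the attractive case.
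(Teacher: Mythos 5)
Your proposal follows essentially the same route as the paper's proof: decompose $\mathcal E_{\alpha_1,\alpha_2,\beta}$ into the two single-population energies $\mathcal E^i_{\alpha_i}\geq 0$ plus the non-negative cross term (because $\beta<0$), use separated test profiles to make the total vanish, conclude each piece vanishes, argue \eqref{C10betanegative} by contradiction, rescale each population around its own minimizer of $u_{i,\bf{a}}$, and kill the cross term in the rescaled HJ equations via tail decay of the partner density. Two small points are worth flagging. First, in your contradiction argument for \eqref{C10betanegative}, the quantity you call ``the gap,'' $(1-\alpha_i/a^*)\,C_L\int|w/m|^{\gamma'}m$, automatically vanishes once the kinetic energy is assumed bounded and $\alpha_i\to a^*$, so it says nothing about the limit being a GN extremizer; you need instead the genuine GN deficit $C_L\int|w/m|^{\gamma'}m-\tfrac{Na^*}{N+\gamma'}\int m^{1+\gamma'/N}\to 0$, which does follow from $\mathcal E^i_{\alpha_i}\to 0$ and $\int V_i m_{i,\bf{a}}\to 0$, and which then yields strict positivity of the limit by the maximum principle. (The paper instead identifies a bounded minimizing sequence for $e^i_{a^*}$ and invokes non-existence of minimizers there; both are valid, yours is perhaps slightly more self-contained.) Second, when decoupling the rescaled $u_1$-equation you must contend with the prefactor $(\hat\varepsilon_1/\hat\varepsilon_2)^{\gamma'/2+N/2}$, which can blow up since $\hat\varepsilon_1,\hat\varepsilon_2$ need not share a rate; the paper fixes $\hat\varepsilon_1\geq\hat\varepsilon_2$ without loss of generality, first passes to the limit in the $u_2$-equation, and then proves arbitrary-order polynomial decay of $m_{2,\hat\varepsilon}$ (via a Lyapunov function $\phi=u_{2,\hat\varepsilon}^p$) strong enough to beat the prefactor. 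Your appeal to ``exponential tails'' is strong enough for this, but you should make explicit both the WLOG ordering of the $\hat\varepsilon_i$'s and the fact that the tail estimate must be uniform in $\hat\varepsilon$ and faster than any polynomial in $\hat\varepsilon_2^{-1}$; the paper's Appendix supplies only subexponential decay, which suffices. Also a typo: the scaled multiplier limit is $-\gamma'/N$, not $\gamma'/N$.
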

\begin{remark}
We point out that unlike the attractive case discussed in Theorem \ref{thm13attractive} and Theorem \ref{thm16refinedblowup},  $\hat \varepsilon_1$ and $\hat \varepsilon_2$ given in Theorem \ref{thm15blowupnegative} both converge to zero but might not be in the same order since $\beta<0$ and the behaviors of $V_1$ and $V_2$ might be distinct around global minimum points locally.
\end{remark}
Theorem \ref{thm15blowupnegative} indicates that when the interaction is repulsive, there are concentration phenomena within system (\ref{ss1}) in some singular limit of parameters $\alpha_1$, $\alpha_2$ and $\beta$. 
 Moreover, similarly as the conclusion shown in Theorem \ref{thm16refinedblowup}, we explore the refined blow-up profiles and obtain 

\begin{theorem}\label{thm17multipopulation}
Assume all conditions in Theorem \ref{thm15blowupnegative} hold.  Suppose that each $V_i$, $i=1,2$ has only one global minimum point $x_i$ with $x_1\not=x_2$ and there exist $ d>0$, $b_i>0$ and $q_i>0$ such that
\begin{align}\label{52viinnotesbetacopy}
V_i(x)=b_i|x-x_i|^{q_i}+O(|x-x_i|^{q_i+1})\text{ for }0<|x-x_i|<d.
\end{align}
Define for $i=1,2,$
\begin{align}\label{assumethm17notecopybeta}
\tilde {\epsilon}_i:=(a^*-\alpha_i)^{\frac{1}{\gamma'+q_i}}\ \text{ and assume $\exists\ s\in(0,1]$ such that }\tilde  \epsilon_1=O(\tilde \epsilon^s_2).
\end{align}
Let $(m_{1,\textbf{a}},w_{1,\textbf{a}},m_{2,\textbf{a}},w_{2,\textbf{a}})$ be a minimizer of (\ref{problem1p1}) and $(m_{i,\hat\varepsilon},w_{i,\hat\varepsilon},u_{i,\hat\varepsilon})$ be defined as (\ref{hatepsilonrescaling20240723}).  Then we have
\begin{align}\label{eq-1.40}
\frac{x_{i,\hat\varepsilon}-x_i}{\hat\varepsilon_i}\rightarrow y_{i0}\text{ such that }H_{m_i,q_i}(y_{i0})=\bar\nu_{q_i},
\end{align}
where $m_i$ and $\bar\nu_{q_i}$, $i=1,2$ are given by \eqref{mlimiting0923} and (\ref{Hmoibarnupi}), respectively.
Moreover, the following asymptotics hold as $\bf{a}\nearrow \bf{a}^*$,
\begin{align*}
\hat\varepsilon_i^{\gamma'}=(1+o(1))\bigg(\frac{\gamma'(a^*-\alpha_i)}{a^*b_i\bar\nu_{q_i}q_i}\bigg)^{\frac{1}{\gamma'+q_i}},~i=1,2.
\end{align*}

\end{theorem}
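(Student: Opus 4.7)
The plan is a two-sided matching asymptotic analysis of $e_{\alpha_1,\alpha_2,\beta}$, taking Theorem~\ref{thm15blowupnegative} as input and exploiting the polynomial expansion (\ref{52viinnotesbetacopy}) to pin down both $\hat\varepsilon_i$ and the fine location $y_{i0}$. First I would observe that each concentration center $x_{i,\hat\varepsilon}$ must accumulate at the unique global minimum $x_i$ of $V_i$: Theorem~\ref{thm15blowupnegative} yields $\int V_i m_{i,\textbf{a}}\to 0$ and $m_{i,\hat\varepsilon}\to m_i$ in $L^p$ with exponential tails inherited from the profile equation (\ref{equmpotentialfree}), and a standard tightness argument then forces $x_{i,\hat\varepsilon}\to x_i$. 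Combined with the separation $x_1\neq x_2$ from (\ref{c26notesbetanegative}) and these exponential tails, the cross-coupling $\int m_{1,\textbf{a}}^{1/2+\gamma'/(2N)}m_{2,\textbf{a}}^{1/2+\gamma'/(2N)}dx$ will be super-polynomially small compared to all other terms we track after rescaling.

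For the \emph{upper bound}, I would construct a separable trial pair by placing a rescaled GN minimizer $(\bar m,\bar w)$ of (\ref{GNinequalitybest}) at each $x_i$: set
\begin{equation*}
\tilde m_i(x)=(M^*)^{-1}\tau_i^N\bar m\bigl(\tau_i(x-x_i)-y_i\bigr), \qquad \tilde w_i(x)=(M^*)^{-1}\tau_i^{N+1}\bar w\bigl(\tau_i(x-x_i)-y_i\bigr),
\end{equation*}
with parameters $\tau_i>0$ and $y_i\in\R^N$ to be optimized. Plugging into $\mathcal E_{\alpha_1,\alpha_2,\beta}$, expanding $V_i$ via (\ref{52viinnotesbetacopy}) after the change of variables, and exploiting the Pohozaev-type relation $\frac{N}{N+\gamma'}\int\bar m^{1+\gamma'/N}=\frac{1}{a^*}\bigl(C_L\int|\bar w/\bar m|^{\gamma'}\bar m\bigr)(M^*)^{\gamma'/N}$ coming from (\ref{GNinequalitybest}), one obtains
\begin{equation*}
e_{\alpha_1,\alpha_2,\beta}\le\sum_{i=1,2}\Bigl[\tfrac{a^*-\alpha_i}{a^*}\,\kappa\,\tau_i^{\gamma'}+(M^*)^{-1}b_i\tau_i^{-q_i}H_{\bar m,q_i}(y_i)\Bigr](1+o(1))
\end{equation*}
for an explicit constant $\kappa$ depending only on $(\bar m,\bar w)$. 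Choosing $y_i$ to realize the infimum $\bar\nu_{q_i}$ and then minimizing in $\tau_i$ via the standard AM-GM balance gives $\tau_i^{\gamma'+q_i}\asymp(a^*-\alpha_i)^{-1}$, producing both the scale $\tilde\epsilon_i$ and the leading constant $\gamma'/(a^*b_i\bar\nu_{q_i}q_i)$ appearing in the stated asymptotic for $\hat\varepsilon_i$.

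For the \emph{lower bound}, I apply (\ref{GNinequalityused}) to each component using $\int m_{i,\textbf{a}}=1$ to obtain, by the very definition of $\hat\varepsilon_i$,
\begin{equation*}
C_L\int\Bigl|\tfrac{w_{i,\textbf{a}}}{m_{i,\textbf{a}}}\Bigr|^{\gamma'}m_{i,\textbf{a}}\,dx-\tfrac{N\alpha_i}{N+\gamma'}\int m_{i,\textbf{a}}^{1+\gamma'/N}\,dx\ge\tfrac{a^*-\alpha_i}{a^*}\,\hat\varepsilon_i^{-\gamma'}.
\end{equation*}
For the potential term I change variables by $x=\hat\varepsilon_iy+x_{i,\hat\varepsilon}$, Taylor-expand $V_i$ around $x_i$ (valid on the rescaled support by the rate $x_{i,\hat\varepsilon}\to x_i$ together with the exponential tails of $m_{i,\hat\varepsilon}$), and use Fatou's lemma along with (\ref{mlimiting0923}) to deduce
\begin{equation*}
\int V_im_{i,\textbf{a}}\,dx\ge b_i\hat\varepsilon_i^{q_i}H_{m_i,q_i}\bigl((x_{i,\hat\varepsilon}-x_i)/\hat\varepsilon_i\bigr)(1+o(1))\ge b_i\hat\varepsilon_i^{q_i}\bar\nu_{q_i}(1+o(1)).
\end{equation*}
Summing over $i=1,2$, with the coupling absorbed into $o$-terms by the separation argument, and optimizing in $\hat\varepsilon_i$, gives a lower bound matching the upper bound to leading order. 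The matching simultaneously forces (\ref{eq-1.40}) (since otherwise the potential term would strictly exceed $\bar\nu_{q_i}$) and the claimed asymptotic for $\hat\varepsilon_i^{\gamma'+q_i}$.

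The \emph{main obstacle} is controlling the cross-interaction when the two intrinsic scales $\hat\varepsilon_1$ and $\hat\varepsilon_2$ differ; this is precisely the role of hypothesis $\tilde\epsilon_1=O(\tilde\epsilon_2^s)$ in (\ref{assumethm17notecopybeta}). One needs a uniform-in-$\textbf{a}$ exponential-decay estimate for each $m_{i,\hat\varepsilon}$ so that, after translating to $x_i$ and rescaling by $\hat\varepsilon_i$, the product $m_{1,\textbf{a}}^{1/2+\gamma'/(2N)}m_{2,\textbf{a}}^{1/2+\gamma'/(2N)}$ is genuinely of lower order than both $\hat\varepsilon_i^{-\gamma'}(a^*-\alpha_i)$ and $\hat\varepsilon_i^{q_i}$. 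A secondary but essential point is quantifying the speed of $x_{i,\hat\varepsilon}\to x_i$ well enough that the Taylor remainder in (\ref{52viinnotesbetacopy}) is a true $o(1)$ correction to the dominant term $b_i\hat\varepsilon_i^{q_i}\bar\nu_{q_i}$.
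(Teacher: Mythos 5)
Your proposal captures the right mechanism and is essentially the route the paper follows, but with a different organization. The paper introduces an intermediate step: it first proves (Lemma~\ref{lemma42notesbetacopy}) the full asymptotics of the decoupled single-population problems $e^i_{\alpha_i}$ defined in~\eqref{problem51innotescopy}, then establishes (Lemma~\ref{lemma4320240809}) the componentwise sandwich $e^i_{\alpha_i}\le\mathcal E^i_{\alpha_i}(m_{i,\textbf{a}},w_{i,\textbf{a}})\le e^i_{\alpha_i}+C_q\tilde\epsilon_2^q$ for every $q>\max\{q_1,q_2\}$, and only then reads off the refined asymptotics of each component from the single-population results. You instead perform the upper/lower matching directly on the coupled $e_{\alpha_1,\alpha_2,\beta}$: the trial pair is the same (a rescaled GN minimizer at each $x_i$, optimized in $\tau_i$ and $y_i$), and the lower bound applies~\eqref{GNinequalityused} and the Taylor expansion of $V_i$ componentwise, exactly as the paper does underneath the hood. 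The computational content is identical; the paper's decoupling via $e^i_{\alpha_i}$ is mainly a bookkeeping device that makes the componentwise matching explicit.

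That said, there is one point your sketch glosses over that the paper handles carefully, and it is precisely what the intermediate Lemma~\ref{lemma4320240809} buys. Your matching produces a bound of the form $\sum_i L_i(\hat\varepsilon_i)\le\sum_i\min_s L_i(s)+o(\cdot)$, from which you want to conclude that each $L_i(\hat\varepsilon_i)$ separately realizes its own minimum. This does follow because each summand is individually bounded below by its minimum, but the error term must be small relative to \emph{both} $\tilde\epsilon_1^{q_1}$ and $\tilde\epsilon_2^{q_2}$, and the two intrinsic scales can differ. This is exactly the role of hypothesis~\eqref{assumethm17notecopybeta} and of the estimate~\eqref{4124obtainfrom20204089}: after rescaling, the cross term is controlled by mixed powers of $\tilde\epsilon_1$ and $\tilde\epsilon_2$, and one must choose the decay exponent $\hat q$ so large that both resulting powers of $\tilde\epsilon_2$ dominate the target $q$; your phrase ``super-polynomially small compared to all other terms'' needs exactly this quantification, with the exponential/fast-polynomial decay of $m_{i,\hat\varepsilon}$ (Proposition~\ref{appenexp}) and the boundedness of $|x_{1,\hat\varepsilon}-x_{2,\hat\varepsilon}|/\hat\varepsilon_j$ from below supplying it. You correctly flag this as the main obstacle, and your second flagged issue, the a priori boundedness of $(x_{i,\hat\varepsilon}-x_i)/\hat\varepsilon_i$, is resolved in the paper by the same proof-by-contradiction used for Theorem~\ref{thm16refinedblowup}, which your matching argument ``otherwise the potential term would strictly exceed $\bar\nu_{q_i}$'' implicitly invokes but would need to be run before the Taylor expansion, not after.
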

\begin{remark}
    In Theorem \ref{thm17multipopulation}, we discuss the refined blow-up profiles of ground states when the interaction coefficient is non-positive under some technical assumption (\ref{assumethm17notecopybeta}).  We would like to remark that this condition is technical and could be improved if the refined decay estimate of population density $m$ is given.  In fact, the improved condition will be exhibited in Section \ref{sect520240929}.
\end{remark} 

The rest of this paper is organized as follows: In Section \ref{preliminary}, we give some preliminary results for the existence and properties of the solutions to  
 Hamilton-Jacobi equations and Fokker-Planck equations, which are used to investigate the existence and blow-up behaviors of minimizers to problem \eqref{problem1p1} .
 Section \ref{sec-existence} is devoted to the exploration of the effect of    the potentials $V_i(x),$ $i=1,2$ and coefficients $\alpha_1,\alpha_2,\beta$ on the existence of minimizers.  Correspondingly, the proof of Theorems \ref{thm11multi}-\ref{thmfinalexistence} will be finished.  In Section   \ref{sect4multipopulation}, we perform the blow-up analysis of minimizers under the case of attractive interactions $\beta>0$, and show the conclusions of Theorem \ref{thm13attractive} and Theorem \ref{thm16refinedblowup}. Finally, in Section \ref{sect520240929}, we focus on the asymptotic profiles of ground states with $\beta<0$ and complete the proof of Theorem \ref{thm15blowupnegative} and Theorem \ref{thm17multipopulation}.

\section{Preliminary Results}\label{preliminary}
In this section, we collect some preliminaries for the existence and regularities of solutions to Hamilton-Jacobi equations and Fokker-Planck equations, respectively.  Furthermore, some useful equalities and estimates satisfied by the solution to the single population Mean-field Games system will be listed. 
\subsection{Hamilton-Jacobi Equations}
Consider the following second order Hamilton-Jacobi equations:
\begin{align}\label{HJB-regularity}
-\Delta u_k+C_H|\nabla u_k|^{\gamma}+\lambda_k=V_k(x)+f_k(x),\ \ x\in\mathbb R^N,
\end{align}
where $\gamma>1$ is fixed, $C_H$ is a given positive  constant independent of $k$ and $(u_k,\lambda_k)$ denote the solutions to (\ref{HJB-regularity}).  For the gradient estimates of $u_k$, we find
\begin{lemma}\label{sect2-lemma21-gradientu}
Suppose that $f_k\in L^{\infty}(\mathbb R^N)$ satisfies  $\Vert f_k\Vert_{L^\infty}\leq C_f$, $|\lambda_k|\leq \lambda$, and the potential functions $V_k(x)\in C^{0,\theta}_{\rm loc}(\mathbb R^N)$ with $\theta\in(0,1)$ satisfy  $0\leq V_k(x)\rightarrow +\infty$ as $|x|\rightarrow +\infty,$ and $\exists~ R>0$  sufficiently large such that 
\begin{align*}
0< C_1\leq \frac{V_k(x+y)}{V_k(x)}\leq C_2,\text{~for~all~}k\text{~and~all~}|x|\geq R \text{~with~}|y|<2,
\end{align*}
where the positive constants $C_f$, $\lambda$, $R$, $C_1$ and $C_2$ are independent of $k$.  Let $(u_k,\lambda_k)\in C^2(\mathbb R^N)\times \mathbb R$ be a sequence of solutions to (\ref{HJB-regularity}).  Then,  for all $k$,
\begin{align*}
|\nabla u_k(x)|\leq C(1+V_k(x))^{\frac{1}{\gamma}}, \text{ for all } x\in\mathbb{R}^N,
\end{align*}
where constant $C$ depends on $C_H$, $C_1$, $C_2$, $\lambda$, $\gamma$, $N$ and $C_f.$

In particular, if there exist   $b\geq 0$ and $C_{F}>0$  independent of $k,$ such that  following conditions hold on $V_k$
\begin{align}\label{cirant-Vk}
 C_F^{-1}(\max\{|x|-C_F,0\})^b\leq V_k(x)\leq C_F(1+|x|)^b,~~\text{for all }k\text{ and }x\in\mathbb R^N,
 \end{align}
 then  we have 
\begin{align*}
|\nabla u_k|\leq C(1+|x|)^{\frac{b}{\gamma}}, ~\text{for all }k\text{ and }x\in\mathbb R^N,
\end{align*}
where constant $C$ depends on $C_H$, $C_{F}$, $b$, $\lambda$, $\gamma$, $N$ and $C_f.$ 

\end{lemma}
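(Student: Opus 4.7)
The plan is to prove the bound by a standard rescaling/Bernstein-type argument: for each point $x_0$, zoom in at a scale adapted to $M:=1+V_k(x_0)$, convert the equation into one with uniformly bounded data on a unit ball, then quote an interior gradient estimate for viscous Hamilton--Jacobi equations. Concretely, fix $x_0\in\mathbb R^N$ and set $\sigma:=C_H^{-1/\gamma} M^{-1/\gamma'}$ and $A:=C_H^{1/(\gamma-1)}\sigma^{(2-\gamma)/(\gamma-1)}$, and define
\begin{align*}
v_k(y) := A\,u_k(x_0+\sigma y),\qquad y\in B_1(0).
\end{align*}
A direct computation shows that $v_k$ solves
\begin{align*}
-\Delta v_k + |\nabla v_k|^{\gamma} + A\sigma^2\lambda_k = A\sigma^2\bigl[V_k(x_0+\sigma y)+f_k(x_0+\sigma y)\bigr].
\end{align*}
The choice of $\sigma$ and $A$ is made precisely so that $A\sigma^2 V_k(x_0)=1$ and the coefficient of $|\nabla v_k|^\gamma$ equals $1$.

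Next I would use the hypotheses to check that the right-hand side and the lower order term in the rescaled equation are uniformly bounded on $B_1(0)$, independent of $k$ and of $x_0$. For $|x_0|\ge R$ the doubling hypothesis $C_1\le V_k(x+y)/V_k(x)\le C_2$ for $|y|<2$ gives $V_k(x_0+\sigma y)\le C_2 V_k(x_0)\le C_2 M$ provided $\sigma\le 2$, which holds for $M$ bounded away from $0$; for $|x_0|<R$ we use that $V_k$ is locally bounded (by $C^{0,\theta}_{\rm loc}$ and the growth condition) so that $V_k(x_0+\sigma y)\le C$ on $B_1$. In both regimes, together with $\|f_k\|_\infty\le C_f$ and $|\lambda_k|\le\lambda$, we obtain
\begin{align*}
\|A\sigma^2 V_k(x_0+\sigma\cdot)\|_{L^\infty(B_1)} + \|A\sigma^2 f_k\|_{L^\infty(B_1)} + |A\sigma^2\lambda_k| \le K,
\end{align*}
for a universal constant $K=K(C_H,C_1,C_2,C_f,\lambda,\gamma,N)$.

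Now I would invoke a standard interior gradient bound for viscous Hamilton--Jacobi equations with superlinear, uniformly coercive Hamiltonian: for any $C^2$ solution $v$ of $-\Delta v+|\nabla v|^\gamma=g$ on $B_1$ with $\|g\|_{L^\infty(B_1)}\le K$, one has $|\nabla v(0)|\le C(N,\gamma,K)$. This is classical (Bernstein's method applied to $|\nabla v|^2$ multiplied by a cutoff, or alternatively the Capuzzo Dolcetta--Leoni--Porretta / Lions estimates; these same estimates are invoked repeatedly in the MFG literature, e.g.\ Cesaroni--Cirant and Cirant--Goffi). Translating back via $\nabla u_k(x_0)=(A\sigma)^{-1}\nabla v_k(0)$ and using $(A\sigma)^{-1}=C_H^{-1/(\gamma-1)}\sigma^{-1/(\gamma-1)}=C\,M^{1/\gamma}$ yields the desired bound
\begin{align*}
|\nabla u_k(x_0)|\le C\bigl(1+V_k(x_0)\bigr)^{1/\gamma}.
\end{align*}
Finally, for the last assertion under \eqref{cirant-Vk}, substitute $V_k(x)\le C_F(1+|x|)^b$ into the bound just obtained.

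The main technical obstacle is verifying the doubling-type bound $V_k(x_0+\sigma y)\le C V_k(x_0)$ uniformly in $k$ and $x_0$ at the chosen scale $\sigma$. For large $|x_0|$ this is immediate from the assumed doubling, but one must check that $\sigma=C_H^{-1/\gamma}M^{-1/\gamma'}$ is small enough that $|y|<1$ corresponds to $|\sigma y|<2$ within the regime where the doubling applies; this is true for $M$ large, while for $M$ bounded one simply uses local boundedness of $V_k$ and absorbs all constants into $C$. A secondary, but routine, point is that the cited interior gradient estimate for viscous HJ equations requires only $L^\infty$ bounds on the data and does not see the sign of $g$; the positivity of the coefficient in front of $|\nabla v|^\gamma$ is what drives the bound.
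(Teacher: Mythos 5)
Your argument is the standard rescaling-plus-interior-gradient-estimate proof for viscous Hamilton--Jacobi equations, which is precisely what the paper's cited sources (Lemma~3.1 of \cite{cirant2024critical} and, underlying it, Theorem~2.5 of \cite{cesaroni2018concentration}) carry out; the paper itself merely defers to those references, so your reconstruction is the ``paper's proof'' in all but name. Two small slips that do not affect the substance: your normalization actually gives $A\sigma^2\bigl(1+V_k(x_0)\bigr)=1$ rather than $A\sigma^2V_k(x_0)=1$ (the latter could not be arranged when $V_k(x_0)=0$, which is exactly why one works with $M=1+V_k(x_0)$), and since $M\ge1$ one only gets $\sigma\le C_H^{-1/\gamma}$, not automatically $\sigma\le 2$, so one should either iterate the doubling hypothesis a bounded number of times to cover $B_\sigma(x_0)$ or simply replace $\sigma$ by $\min\{\sigma,1\}$, at the cost of a larger but still universal constant.
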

 \begin{proof}
     See Lemma 3.1 in \cite{cirant2024critical} and the argument is the slight modification of the proof of Theorem 2.5 in \cite{cesaroni2018concentration}. \end{proof}
For the lower bound of $u_k$, we have 
\begin{lemma}[C.f. Lemma 3.2 in \cite{cirant2024critical}]\label{lowerboundVkgenerallemma22}
Suppose all conditions in Lemma \ref{sect2-lemma21-gradientu} hold. 
Let $u_k$ be a family of $C^2$ solutions and assume that $u_k(x)$ are bounded from below uniformly.  Then there exist positive constants $C_3$ and $C_4$ independent of $k$ such that 
\begin{align}\label{29uklemma22}
u_k(x)\geq C_3V^{\frac{1}{r'}}_k(x)-C_4,\text{~}\forall x\in\mathbb R^n,~\text{for all }k.
\end{align}
In particular, if the following conditions hold on $V_k$
\begin{align}\label{cirant-Vk-1}
 C_F^{-1}(\max\{|x|-C_F,0\})^b\leq V_k(x)\leq C_F(1+|x|)^b,~~\text{for all }k\text{ and }x\in\mathbb R^n,
 \end{align}
 where constants $b> 0$ and $C_{F}$ are independent of $k,$ then we have 
\begin{align}\label{usolutionlowerestimatepre-11}
 u_k(x)\geq C_3|x|^{1+\frac{b}{r'}}-C_4,\text{~for all }k, x\in\mathbb R^n.
\end{align}
If $b=0$ in (\ref{cirant-Vk-1}) and there exist $R>0$ and $\hat \delta>0$ independent of $k$ such that 
\begin{align*}
f_k+V_k-\lambda_k>\hat \delta>0\text{~for~all }|x|>R,
\end{align*}
then \eqref{usolutionlowerestimatepre-11} also holds.
\end{lemma}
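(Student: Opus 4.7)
The plan is to prove the lower bound by a comparison argument: I would construct an explicit classical subsolution of the Hamilton--Jacobi equation \eqref{HJB-regularity} of the form $\psi(x)=C_3 V_k^{1/\gamma'}(x)-C_4$ (in the general case) or $\psi(x)=C_3|x|^{1+b/\gamma'}-C_4$ (in the polynomial case), and then show by a minimum-principle argument that $u_k\ge\psi$. The only input beyond the hypotheses already in the lemma is the gradient estimate from Lemma \ref{sect2-lemma21-gradientu}. Throughout, I would use the structural fact that $\gamma'>N\ge 1$ forces $\gamma<2$, which will be essential in the scaling checks below (interpreting $r'=\gamma'$ in the statement).

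To verify the subsolution property in the general case, one differentiates $\psi=C_3V_k^{1/\gamma'}-C_4$ to obtain $|\nabla\psi|^{\gamma}=C_3^{\gamma}(\gamma')^{-\gamma}V_k^{-1}|\nabla V_k|^{\gamma}$ and a corresponding bound on $\Delta\psi$ involving $\Delta V_k$ and $|\nabla V_k|^{2}/V_k$. The slow-growth hypothesis $C_1\le V_k(x+y)/V_k(x)\le C_2$ for $|y|\le 2$ gives, via a mean-value argument, $|\nabla V_k|\le C V_k$ on $\{|x|\ge R\}$, whence $|\nabla\psi|^{\gamma}\le CC_3^{\gamma}V_k^{\gamma-1}$ and $|\Delta\psi|$ is of comparable or lower order. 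Since $\gamma<2$, $V_k^{\gamma-1}=o(V_k)$ as $V_k\to\infty$, so for $C_3$ small and $|x|\ge R_0$ large,
\[
-\Delta\psi+C_H|\nabla\psi|^{\gamma}+\lambda_k\le V_k(x)+f_k(x)-\eta
\]
with $\eta>0$. For the polynomial case $V_k\sim|x|^b$ with $b>0$, the same check is direct: $|\nabla\psi|^{\gamma}\sim|x|^{b\gamma/\gamma'}$ with $b\gamma/\gamma'<b$ (again since $\gamma<2$), and $|\Delta\psi|\sim|x|^{b/\gamma'-1}$, both strictly dominated by $V_k$ for large $|x|$. For the bounded case $b=0$, one instead takes $\psi(x)=C_3|x|-C_4$ and uses the positivity hypothesis $f_k+V_k-\lambda_k\ge\hat\delta$ outside $B_R$; picking $C_3$ so that $C_H C_3^{\gamma}<\hat\delta/2$ makes $\psi$ a strict subsolution outside $B_R$.

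The comparison step proceeds by introducing a perturbation $w_{\varepsilon}:=u_k-(1-\varepsilon)\psi$ for small $\varepsilon>0$; since $u_k$ is bounded below uniformly and $(1-\varepsilon)\psi\to+\infty$, $w_{\varepsilon}$ attains its infimum at some $x^{\star}\in\mathbb R^N$. At $x^{\star}$ one has $\nabla u_k=(1-\varepsilon)\nabla\psi$ and $\Delta u_k\ge(1-\varepsilon)\Delta\psi$, and plugging into \eqref{HJB-regularity} together with the strict subsolution property yields a contradiction as long as $|x^{\star}|>R_0$ and $\varepsilon$ is sufficiently small. Hence $x^{\star}\in\bar B_{R_0}$, so
\[
\inf_{\mathbb R^N}w_{\varepsilon}\ge\inf_{\bar B_{R_0}}u_k-(1-\varepsilon)\max_{\bar B_{R_0}}\psi\ge -\widetilde C_4
\]
for a constant $\widetilde C_4$ depending only on the data in Lemma \ref{sect2-lemma21-gradientu} and the uniform lower bound on $u_k$, but independent of $k$ and $\varepsilon$. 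Letting $\varepsilon\to 0^+$ gives $u_k\ge\psi-\widetilde C_4$, which after relabeling constants yields \eqref{29uklemma22}; the polynomial and $b=0$ cases follow in the same way.

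The main obstacle is technical rather than conceptual: one must ensure that the infimum of $w_{\varepsilon}$ is genuinely attained and that all the constants ($C_3$, $R_0$, $\widetilde C_4$) can be chosen uniformly in $k$. The attainment relies on the coerce behavior of $(1-\varepsilon)\psi$ at infinity, which forces $w_{\varepsilon}\to+\infty$ there once $u_k$ is known to be bounded below; the uniformity in $k$ follows from the uniform constants in the hypotheses (in particular, the uniform slow-growth constants $C_1,C_2$ for $V_k$, the uniform bound $C_f$ on $f_k$, and the uniform bound $\lambda$ on $\lambda_k$), together with the uniform lower bound on $u_k$ assumed in the lemma. Once the uniform subsolution inequality is obtained, the rest of the argument carries through as above.
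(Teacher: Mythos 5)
The paper does not give its own proof of this lemma (it points to Lemma 3.2 of \cite{cirant2024critical}), so this review assesses your argument on its own terms. The barrier-plus-comparison strategy is the right one in spirit, but the comparison step contains a logical reversal that is not a technicality. You claim that $w_{\varepsilon}:=u_k-(1-\varepsilon)\psi$ attains its infimum \emph{because} $u_k$ is bounded below uniformly and $(1-\varepsilon)\psi\to+\infty$; those two hypotheses establish precisely the opposite, namely that $w_{\varepsilon}\ge -M-(1-\varepsilon)\psi\to-\infty$, i.e.\ $w_{\varepsilon}$ is potentially unbounded from below and is certainly not coercive. For $w_{\varepsilon}$ to attain an interior minimum one would need $u_k$ itself to grow at least as fast as $(1-\varepsilon)\psi$, which is exactly the conclusion the lemma is trying to establish. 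The usual repairs---exhaustion by balls $B_R$ together with an argument ruling out boundary minima, or a coercive penalization $w_{\varepsilon,\delta}=u_k-(1-\varepsilon)\psi+\delta\rho$ with $\rho$ growing strictly faster than $\psi$, followed by $\delta\to 0^+$---each insert extra terms $\nabla\rho,\Delta\rho$ into the second-order conditions at $x^{\star}$ whose control against $V_k$, and the compatibility between the growth rates of $\rho$, $\psi$ and $V_k$, is the actual delicate content of the lemma and is omitted here.

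Separately, the general barrier $\psi=C_3V_k^{1/r'}-C_4$ is not admissible as written: the hypotheses of Lemma \ref{sect2-lemma21-gradientu} only give $V_k\in C^{0,\theta}_{\rm loc}(\mathbb R^N)$, so $\Delta\psi$ need not exist, and the bound $|\nabla V_k|\le CV_k$ does not follow from the finite-scale ratio condition $C_1\le V_k(x+y)/V_k(x)\le C_2$ for $|y|<2$ (a rapidly oscillating $V_k$ respects the ratio bound while its gradient is arbitrarily large). One must first replace $V_k$ by a mollification $\tilde V_k=V_k*\rho_1$, which is smooth, pointwise comparable to $V_k$ by the slow-growth condition, and genuinely satisfies $|\nabla\tilde V_k|,|\Delta\tilde V_k|\le CV_k$, and then build the barrier from $\tilde V_k$. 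Finally, your reading $r'=\gamma'$ is in tension with the paper's own invocation of this lemma in \eqref{gradientuiViloweboun}, which records the conclusion as $u_{i,\textbf{a}}(x)\ge CV_i^{1/\gamma}(x)-C$; with $r'=\gamma$ the polynomial-case barrier becomes $C_3|x|^{1+b/\gamma}-C_4$, the Hamiltonian term $C_H|\nabla\psi|^{\gamma}\sim C_3^{\gamma}|x|^{b}$ is of the \emph{same} order as $V_k$ and is dominated by taking $C_3$ small rather than by the inequality $\gamma/\gamma'<1$, and no constraint $\gamma<2$ is needed. Note also that $\gamma'>N\ge1$ does not force $\gamma<2$ in the case $N=1$.
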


The existence result of the classical solution to (\ref{HJB-regularity}) is summarized as 
\begin{lemma}[C.f. Lemma 3.3 in \cite{cirant2024critical}]  \label{lemma22preliminary}
Suppose $V_k+f_k$ are locally H\"{o}lder continuous and bounded from below uniformly in $k$.  Define 
\begin{align*}
\bar \lambda_k:=\sup\{\lambda\in\mathbb R~|~(\ref{HJB-regularity})\text{ has a solution }u_k\in C^2(\mathbb R^n)\}.
\end{align*}
Then 
\begin{itemize}
    \item[(i).] $\bar \lambda_k$ are finite for every $k$ and (\ref{HJB-regularity}) admits a solution $(u_k,\lambda_k)\in C^2(\mathbb R^n)\times \mathbb R$  with $\lambda_k=\bar \lambda_k$ and $u_k(x)$ being bounded from below (may not uniform in $k$).  Moreover,
    $$\bar \lambda_k=\sup\{\lambda\in\mathbb R~|~(\ref{HJB-regularity})\text{ has a subsolution }u_k\in C^2(\mathbb R^n)\}.$$
    \item[(ii).] If $V_k$ satisfies (\ref{cirant-Vk}) with $b>0$, then $u_k$ is unique up to constants for fixed $k$ and there exists a positive constant $C$ independent of $k$ such that 
    \begin{align}\label{lowerboundusect2}
    u_k(x)\geq C|x|^{\frac{b}{r'}+1}-C, \forall x\in\mathbb R^n.
    \end{align}
    In particular, if $V_k\equiv 0$ in (\ref{HJB-regularity}) and there exists $\sigma>0$ independent of $k$ such that 
    \begin{align*}
    f_k-\lambda_k\geq \sigma>0, \ \ \text{for~} |x|>K_2,
    \end{align*}
    where $K_2>0$ is a large constant independent of $k$, then (\ref{lowerboundusect2}) also holds.
 \end{itemize}
 (iii). If $V_k$ satisfies \eqref{Vicondition2}, then there exist uniformly bounded from below classical solutions $u_k$ to problem (\ref{HJB-regularity}) satisfying estimate (\ref{29uklemma22}).
\end{lemma}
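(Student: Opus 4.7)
The plan is to prove the three parts of Lemma \ref{lemma22preliminary} in sequence, leveraging approximation by Dirichlet problems on balls, standard comparison principles for second-order Hamilton--Jacobi equations with coercive Hamiltonian, and the a priori bounds already at our disposal from Lemmas \ref{sect2-lemma21-gradientu} and \ref{lowerboundVkgenerallemma22}.

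For Part (i), finiteness of $\bar\lambda_k$ follows by a two-sided argument: a sufficiently large negative constant is a classical subsolution once $V_k+f_k$ is bounded below, so $\bar\lambda_k>-\infty$; for the upper bound, a standard ergodic-type computation (evaluating the equation at a minimizing sequence of $u_k$ and exploiting the coercivity of $V_k$) shows $\bar\lambda_k<\infty$. To produce classical solutions for $\lambda<\bar\lambda_k$, I would solve the Dirichlet problem on balls $B_R$ (with, say, boundary data equal to a convenient barrier) and let $R\to\infty$: on each $B_R$ the gradient bound of Lemma \ref{sect2-lemma21-gradientu} together with Schauder/Calder\'on--Zygmund theory yields uniform $C^{2,\theta}_{\mathrm{loc}}$ bounds, and a diagonal extraction gives a classical solution on $\mathbb R^N$. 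The characterization of $\bar\lambda_k$ via subsolutions follows because a \emph{strict} subsolution remains a subsolution for slightly larger $\lambda$; solvability at $\lambda=\bar\lambda_k$ itself is obtained by taking $\lambda_n\nearrow\bar\lambda_k$ and extracting a convergent subsequence of the normalized solutions $u_n - u_n(0)$.

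For Part (ii), uniqueness modulo constants under the coercive growth \eqref{cirant-Vk} follows from a strong-maximum-principle / Liouville argument: given two solutions $u_k^{(1)},u_k^{(2)}$ with $\lambda_k=\bar\lambda_k$, their difference $v$ satisfies a linear elliptic equation $-\Delta v+b(x)\cdot\nabla v=0$ obtained by linearizing $|\nabla u|^{\gamma}$ along the segment connecting $\nabla u_k^{(1)}$ and $\nabla u_k^{(2)}$; the coercivity of $V_k$ forces both $u_k^{(i)}$ to grow at the same polynomial rate, and a Liouville-type argument in $\mathbb R^N$ shows $v$ is constant. The lower bound \eqref{lowerboundusect2} is obtained by building an explicit barrier of the form $\psi(x)=c_1(1+|x|^{2})^{(1+b/r')/2}-c_2$; since $r'=\gamma$ in the statement, one verifies $|\nabla\psi|^{\gamma}\sim|x|^{b}$ at infinity, matching the growth of $V_k$, so that $\psi$ is a subsolution for $|x|$ large and the comparison principle yields $u_k\geq\psi-C$. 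The case $V_k\equiv 0$ with $f_k-\lambda_k\geq\sigma>0$ at infinity is treated analogously using a linear barrier $\psi\sim|x|$.

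Part (iii) combines Part (i) with Lemma \ref{lowerboundVkgenerallemma22}: assumption \eqref{Vicondition2} gives two-sided polynomial bounds on $V_k$ of the type \eqref{cirant-Vk}, so Part (i) produces classical solutions, and applying Lemma \ref{lowerboundVkgenerallemma22} yields \eqref{29uklemma22}. The main obstacle I foresee is precisely the uniform-in-$k$ control: Lemma \ref{lowerboundVkgenerallemma22} requires $u_k$ to be bounded below \emph{uniformly} in $k$, whereas the construction in Part (i) a priori only gives a $k$-dependent lower bound. Closing this gap requires carefully tracking the dependence of the barrier construction and of the comparison-principle constants on the parameters that are assumed uniform ($C_H$, $b$, $C_F$, $\|f_k\|_\infty$, and the implicit constant in \eqref{Vicondition2}), then normalizing each $u_k$ at a suitable minimizer so that the pointwise lower bound becomes uniform.
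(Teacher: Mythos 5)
The paper does not prove this lemma at all; it simply cites Lemma~3.3 of \cite{cirant2024critical} in the statement, so there is no internal argument to compare against. Your outline for Parts (i) and (ii) — Dirichlet approximation on expanding balls, diagonal extraction from $C^{2,\theta}_{\mathrm{loc}}$ bounds, the characterization of $\bar\lambda_k$ via strict subsolutions, and uniqueness at the critical value through a linearized-drift argument together with an explicit subsolution barrier of growth $|x|^{1+b/r'}$ — is the standard route and plausible in outline, though the Liouville step in the uniqueness argument needs care since the linearized drift $b(x)$ is unbounded and the literature typically proceeds by first showing the difference of solutions is bounded and then applying the strong maximum principle, rather than a direct Liouville theorem.

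Part (iii), however, contains a factual error that the rest of your sketch rests on. You assert that assumption \eqref{Vicondition2} gives two-sided polynomial bounds on $V_k$ of the type \eqref{cirant-Vk}, but \eqref{Vicondition2} only supplies a polynomial \emph{lower} bound $V_i\gtrsim|x|^{b}$; the upper bound there is exponential, $\limsup_{|x|\to\infty}V_i(x)/e^{\delta|x|}<+\infty$, which is strictly weaker than $V_i\lesssim(1+|x|)^{b}$. Hence one cannot import the polynomial barrier $\psi\sim|x|^{1+b/r'}$ from Part (ii), and indeed the conclusion of Part (iii) is stated in terms of $V_k^{1/r'}$ via \eqref{29uklemma22} precisely because $V_k$ may grow exponentially — these are genuinely different estimates, not cosmetic variants. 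You do correctly identify that the real content of Part (iii) is the \emph{uniform} lower bound on $u_k$ required as a hypothesis in Lemma~\ref{lowerboundVkgenerallemma22}, but you only remark that closing the gap ``requires carefully tracking the dependence of the barrier construction'' on the uniform parameters without actually carrying this out. Constructing a $k$-independent subsolution barrier that works under the exponential upper bound of \eqref{Vicondition2}, and choosing a normalization of $u_k$ (e.g.\ fixing $\inf u_k=0$ and localizing the minimizer uniformly) that turns a $k$-dependent lower bound into a uniform one, is exactly the missing step, and your argument as written does not supply it.
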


\subsection{Fokker-Planck Equations}

Of concern the second order Fokker-Planck equation
\begin{align}\label{sect2-FP-eq}
-\Delta m+\nabla\cdot w=0,\ \ x\in\mathbb R^N,
\end{align}
where $w$ is given and $m$ denotes the solution, we have the following results for the regularity:
\begin{lemma}\label{lemma21-crucial-cor}
Let $(m,w)\in \left(L^1(\mathbb R^N)\cap W^{1, \hat q}(\mathbb R^N)\right)\times L^1(\mathbb R^N)$ be a  solution to (\ref{sect2-FP-eq}) with 
\begin{equation*}
\hat q:=\begin{cases}
\frac{N}{N-\gamma'+1} &\text{ if }\gamma'<N,\\
\in\left(\frac{2N}{N+2},N\right)&\text{ if }\gamma'=N,\\
\gamma' &\text{ if }\gamma'> N.
\end{cases}
\end{equation*}
Assume that 
\begin{equation*}
\Lambda_{\gamma'}:=\int_{\mathbb R^n}|m|\Big|\frac{w}{m}\Big|^{\gamma'}\, dx<\infty,
\end{equation*}
then we have $w\in L^{1}(\mathbb R^N)\cap L^{\hat q}(\mathbb R^N)$ and there exists $\mathcal{C}=\mathcal{C}(\Lambda_{\gamma'},\|m\|_{L^1(\mathbb R^N)})>0$ such that
$$\|m\|_{W^{1,\hat q}(\mathbb R^N)}, \|w\|_{L^1(\mathbb R^N)},\|w\|_{L^{\hat q}(\mathbb R^N)}\leq \mathcal{C}.$$

\end{lemma}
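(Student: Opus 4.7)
The plan is to combine H\"older's inequality (turning the finiteness of $\Lambda_{\gamma'}$ into $L^p$-bounds on $w$) with the Calder\'on--Zygmund estimate for the Fokker--Planck operator (turning $L^{\hat q}$-bounds on $w$ into $W^{1,\hat q}$-bounds on $m$), and then close the loop via Sobolev embedding. Since $(m,w)$ already lies in the target spaces by hypothesis, all norms that appear in the argument are a priori finite; the task is only to make the final constants depend solely on $\|m\|_{L^1}$ and $\Lambda_{\gamma'}$.

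\textbf{Step 1 ($L^1$-bound on $w$).} Writing $|w|=m\cdot(|w|/m)$ on $\{m>0\}$ (note that $w=0$ a.e.\ on $\{m=0\}$ by the very definition of $\Lambda_{\gamma'}$) and applying H\"older with the conjugate exponents $\gamma$ and $\gamma'$ immediately gives
\[
\|w\|_{L^1(\mathbb R^N)}\;\le\;\|m\|_{L^1(\mathbb R^N)}^{1/\gamma}\,\Lambda_{\gamma'}^{1/\gamma'}.
\]

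\textbf{Step 2 ($L^{\hat q}$-bound on $w$).} When $\hat q<\gamma'$ (the regimes $\gamma'\le N$), a further H\"older split with exponents $\gamma'/\hat q$ and $\gamma'/(\gamma'-\hat q)$ yields
\[
\int_{\mathbb R^N}|w|^{\hat q}\,dx\;\le\;\bigg(\int_{\mathbb R^N}m^{r}\,dx\bigg)^{(\gamma'-\hat q)/\gamma'}\,\Lambda_{\gamma'}^{\hat q/\gamma'},\qquad r:=\frac{\hat q(\gamma'-1)}{\gamma'-\hat q}.
\]
A direct computation shows that $r$ equals the Sobolev conjugate $\hat q^{*}=N\hat q/(N-\hat q)$ precisely when $\gamma'<N$, while for $\gamma'=N$ the prescribed range $\hat q\in(2N/(N+2),N)$ keeps $r<\hat q^{*}$, so that in either case the embedding $W^{1,\hat q}\hookrightarrow L^{r}$ controls $\int m^{r}\,dx$. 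In the supercritical case $\gamma'>N$ one instead has $\hat q=\gamma'$, and the above estimate is replaced by $\int|w|^{\gamma'}\,dx\le\|m\|_{L^\infty}^{\gamma'-1}\Lambda_{\gamma'}$, with $\|m\|_{L^\infty}$ controlled by $\|m\|_{W^{1,\gamma'}}$ through Morrey's embedding.

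\textbf{Step 3 (Elliptic regularity and closing).} Invoking the Calder\'on--Zygmund bound $\|\nabla m\|_{L^{\hat q}}\le C\|w\|_{L^{\hat q}}$ for the distributional identity $-\Delta m=-\nabla\cdot w$ (valid since $1<\hat q<\infty$), combined with the embedding from Step 2 and a Gagliardo--Nirenberg interpolation against $\|m\|_{L^1}$ to bound $\|m\|_{L^{\hat q}}$, produces an inequality of the form
\[
\|m\|_{W^{1,\hat q}(\mathbb R^N)}\;\le\;C\big(\Lambda_{\gamma'},\|m\|_{L^1}\big)+C\,\|m\|_{W^{1,\hat q}(\mathbb R^N)}^{\eta}
\]
with an exponent $\eta=(\gamma'-1)/\gamma'<1$ (coming from the factor $\|m\|_{L^r}^{(\gamma'-1)/\gamma'}$ in Step 2). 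Young's inequality absorbs the last term into the left, giving the desired bound on $\|m\|_{W^{1,\hat q}}$ and, fed back into Step 2, the bound on $\|w\|_{L^{\hat q}}$.

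The main obstacle I anticipate is the bookkeeping of exponents so that the power $\eta$ in Step 3 is \emph{strictly} less than one across all three regimes and so that the $L^r$-norm of $m$ appearing in Step 2 is genuinely controlled by $\|m\|_{W^{1,\hat q}}$ through Sobolev or Morrey. The very definition of $\hat q$ in the statement is engineered for these exponents to align, and verifying the alignment in each of the three cases $\gamma'<N$, $\gamma'=N$, $\gamma'>N$ is the heart of the calculation.
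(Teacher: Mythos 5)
Your proposal is correct and is essentially the same argument as the one in the reference the paper cites (Lemma 3.5 of \cite{cirant2024critical}): the H\"older estimate turning $\Lambda_{\gamma'}$ and an $L^r$-norm of $m$ into an $L^{\hat q}$-bound on $w$, the Calder\'on--Zygmund estimate $\|\nabla m\|_{L^{\hat q}}\le C\|w\|_{L^{\hat q}}$ for $-\Delta m=-\nabla\cdot w$, and the absorption via Young's inequality using the subunit exponent $\eta=1/\gamma$. Your verification of the exponent bookkeeping is also right: in the case $\gamma'<N$ one indeed finds $r=\hat q(\gamma'-1)/(\gamma'-\hat q)=N/(N-\gamma')=\hat q^{*}$ so the Gagliardo--Nirenberg--Sobolev inequality closes the loop with $\|\nabla m\|_{L^{\hat q}}$ alone; in the case $\gamma'=N$ the range $\hat q\in(2N/(N+2),N)$ gives $r<\hat q^{*}$ and $\hat q>1$; and for $\gamma'>N$ the Morrey embedding replaces Sobolev, again with $\eta=(\gamma'-1)/\gamma'<1$.
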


\begin{proof}
    See the proof of Lemma 3.5 in \cite{cirant2024critical}.
\end{proof}

Next, we state some useful identities satisfied by the single population Mean-field Games system.  First of all, we have the exponential decay estimates of $m$ when some condition is imposed on the Lagrange multiplier, which is 

\begin{lemma}[ C.f. Proposition 5.3 in \cite{cesaroni2018concentration} ]\label{mdecaylemma}
Assume $\gamma'>N$.  Let $(u,\lambda, m)\in C^2(\mathbb R^n)\times \mathbb R\times \big(W^{1,\gamma'}(\mathbb R^n)\cap L^1(\mathbb R^n)\big)$ with $u$ bounded from below,  and $\lambda<0$ be the solution of the following Mean-field Games system
\begin{align}\label{26preliminaryfinal}
\left\{\begin{array}{ll}
-\Delta u+C_H|\nabla u|^{\gamma}+\lambda=-m^{\nu}, &x\in\mathbb R^N,\\
\Delta m+C_H\gamma\nabla\cdot(m|\nabla u|^{\gamma-2}\nabla u )=0, &x\in\mathbb R^N,
\end{array}
\right.
\end{align}
where $\nu\in(0,\frac{\gamma'}{N}].$
 Then, we have there exist $\kappa_1,\kappa_2>0$ such that 
\begin{align*}
m(x)\leq \kappa_1 e^{-\kappa_2|x|}  ~\text{ for all } x\in \mathbb R^N.
\end{align*}
\end{lemma}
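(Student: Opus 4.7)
The plan is to first reduce the problem to linear growth of $u$ at infinity and then construct an exponentially decaying barrier for $m$. Since $\gamma'>N$, Morrey's inequality applied to $m\in W^{1,\gamma'}(\mathbb{R}^N)$ combined with $m\in L^1(\mathbb{R}^N)$ forces $m$ to be uniformly continuous with $m(x)\to 0$ as $|x|\to\infty$. Hence $m^\nu \leq |\lambda|/2$ on $\{|x|\geq R_0\}$ for some $R_0>0$, so the HJB source satisfies $-m^\nu - \lambda \geq |\lambda|/2 > 0$ there. This places the problem in the scope of Lemma \ref{lemma22preliminary}(ii) with $V\equiv 0$ and the required exterior positivity in place, producing constants $c_1,c_2>0$ such that $u(x)\geq c_1|x|-c_2$ on $\mathbb{R}^N$; simultaneously Lemma \ref{sect2-lemma21-gradientu} delivers a uniform global bound $|\nabla u|\leq C$.

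I would then attempt to upgrade this to the pointwise majorant $m(x)\leq K e^{-\alpha u(x)}$, from which the linear lower bound on $u$ immediately yields $m(x)\leq \kappa_1 e^{-\kappa_2|x|}$ with $\kappa_1:=Ke^{\alpha c_2}$, $\kappa_2:=\alpha c_1$. Rewrite the Fokker--Planck equation in linear non-divergence form
\begin{equation*}
L_0 m := \Delta m + \mathbf{b}\cdot\nabla m + (\nabla\cdot\mathbf{b})\,m = 0,\qquad \mathbf{b}:=C_H\gamma|\nabla u|^{\gamma-2}\nabla u,
\end{equation*}
and set $\psi:=Ke^{-\alpha u}$. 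A direct computation using the HJB identity $\Delta u = C_H|\nabla u|^\gamma + \lambda + m^\nu$ gives
\begin{equation*}
L_0\psi = \psi\bigl[\alpha^2|\nabla u|^2 - \alpha C_H(\gamma+1)|\nabla u|^\gamma - \alpha\lambda - \alpha m^\nu + \nabla\cdot\mathbf{b}\bigr].
\end{equation*}
Once $L_0\psi\leq 0$ is established on $\{|x|\geq R_0\}$, the function $v:=(m-\psi)/\psi$ satisfies a linear elliptic equation whose zeroth-order coefficient $L_0\psi/\psi$ is nonpositive, so $v$ cannot attain a positive interior maximum in the exterior. Choosing $K$ large enough that $m\leq \psi$ on $\partial B_{R_0}$ and exploiting $m,\psi\to 0$ at infinity then forces $m\leq \psi$ throughout the exterior, and hence on all of $\mathbb{R}^N$ after adjusting constants inside $B_{R_0}$.

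The principal obstacle is the sign analysis of $L_0\psi$: the harmful contributions are $-\alpha\lambda = \alpha|\lambda| > 0$ and the sign-indefinite $\nabla\cdot\mathbf{b}$. My plan is to handle the latter by bootstrapping $u\in C^{2,\theta}_{\rm loc}$ via Schauder estimates applied to the HJB equation, whose right-hand side is bounded and locally H\"older continuous by the continuity of $m$ established above; this renders $\nabla\cdot\mathbf{b}$ uniformly bounded on the exterior region. Dominating $\alpha|\lambda|$ by the negative term $-\alpha C_H(\gamma+1)|\nabla u|^\gamma$ requires an asymptotic lower bound $|\nabla u(x)|\geq c>0$ for $|x|$ large, which I would derive by comparing the HJB equation with its ergodic limit $\Delta u = C_H|\nabla u|^\gamma - |\lambda|$, whose linearly growing solutions carry a strictly positive asymptotic slope. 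With these two ingredients a sufficiently small choice of $\alpha$ makes the bracket strictly negative on $\{|x|\geq R_0\}$ and closes the comparison; I expect this asymptotic lower bound on $|\nabla u|$ to be the most delicate step of the argument.
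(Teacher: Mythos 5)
The central difficulty in your plan is the asymptotic lower bound $|\nabla u(x)| \ge c > 0$ for large $|x|$, which you correctly flag as the weak link. It does not follow from what you have established: linear growth $u \ge c_1|x| - c_2$ together with a global Lipschitz bound $|\nabla u| \le M$ does not prevent $\nabla u$ from vanishing on unbounded sets (consider $|x| + \sin|x|$, which grows linearly, has bounded gradient, and has vanishing gradient on a sequence of spheres escaping to infinity). The remark about ``comparing with the ergodic limit'' is a heuristic, not a proof, and establishing such a lower bound would be its own non-trivial undertaking. Without it the bracket in your $L_0\psi$ computation contains $-\alpha\lambda = \alpha|\lambda| > 0$ with nothing to dominate it wherever $|\nabla u|$ is small, and the comparison collapses. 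A secondary issue: for $1<\gamma<2$ the drift $\mathbf b=C_H\gamma|\nabla u|^{\gamma-2}\nabla u$ need not be differentiable where $\nabla u$ vanishes, so $\nabla\cdot\mathbf b$ is not pointwise controlled there even with a global $D^2u$ bound.

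The route taken by Proposition 5.3 in \cite{cesaroni2018concentration} (and reproduced in the paper's Appendix~\ref{appendixA} for a close analogue) sidesteps both problems. Instead of a pointwise supersolution, one tests the Fokker--Planck equation against $\phi = e^{\kappa u}$ with small $\kappa > 0$. The relevant quantity is the adjoint $L^*\phi := \Delta\phi - C_H\gamma|\nabla u|^{\gamma-2}\nabla u\cdot\nabla\phi$, which carries no $\nabla\cdot\mathbf b$ term at all, so neither a global $D^2 u$ bound nor differentiability of $\mathbf b$ is needed. More decisively, inserting the HJB identity gives
\begin{equation*}
L^*\phi = \kappa\,\phi\,\bigl[-C_H(\gamma - 1)|\nabla u|^\gamma + \lambda + m^\nu + \kappa|\nabla u|^2\bigr],
\end{equation*}
where $\lambda<0$ now enters with the \emph{favorable} sign: using your reduction $m^\nu \le |\lambda|/4$ on $\{|x| > R\}$, the global bound $|\nabla u|\le M$ from Lemma~\ref{sect2-lemma21-gradientu}, and $-C_H(\gamma-1)|\nabla u|^\gamma\le 0$, the choice $\kappa < |\lambda|/(4M^2)$ yields $L^*\phi \le -\tfrac{\kappa|\lambda|}{2}\phi$ on the exterior with no lower bound on $|\nabla u|$ required. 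Splitting $0=\int_{\mathbb R^N} m\,L^*\phi\,dx$ over $B_R$ and its complement gives $\int_{\mathbb R^N} m\,e^{\kappa u}\,dx < \infty$, and combined with the linear lower bound on $u$ (your application of Lemma~\ref{lemma22preliminary}(ii) is correct) and the uniform H\"older continuity of $m$ from $W^{1,\gamma'}\hookrightarrow C^{0,1-N/\gamma'}$ with $\gamma'>N$, this upgrades to the pointwise bound $m\le\kappa_1 e^{-\kappa_2|x|}$. Your reduction to exterior positivity of $-m^\nu-\lambda$, the linear growth of $u$, and the global gradient bound are all correct and remain the right preparation; the barrier step should be replaced by this Lyapunov computation.
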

With the aid of Lemma \ref{mdecaylemma}, we have the following results for the Pohozaev identities satisfied by the solution to system (\ref{26preliminaryfinal}):
\begin{lemma}[C.f. Proposition 3.1 in \cite{cirant2016stationary}]\label{poholemma}
Assume all conditions in Lemma  \ref{mdecaylemma} hold and denote $w=-C_H\gamma m|\nabla u|^{\gamma-2}\nabla u$. 
 Then we have the following Pohozaev type identities hold:
\begin{align*}
\left\{\begin{array}{ll}
\lambda\int_{\mathbb R^N}m\, dx=-\frac{(\nu+1)\gamma'-N\nu}{(\alpha+1)\gamma'}\int_{\mathbb R^N}m^{\nu+1}\,dx,\\
C_L\int_{\mathbb R^N}m\big|\frac{w}{m}\big|^{\gamma'}\, dx=\frac{N\nu}{(\nu+1)\gamma'}\int_{\mathbb R^N}m^{\nu+1}\, dx=(\gamma-1)C_H\int_{\mathbb R^N} m|\nabla u|^{\gamma}\, dx.
\end{array}
\right.
\end{align*}

\end{lemma}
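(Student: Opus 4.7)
The plan is to obtain the two identities from three ingredients: (i) an energy identity coming from testing the HJB equation against $m$; (ii) a scaling/Pohozaev-type identity obtained by pairing HJB with $x\cdot\nabla m$ and FP with $x\cdot\nabla u$; and (iii) a pointwise algebraic identity linking $|w/m|^{\gamma'}$ to $|\nabla u|^\gamma$. All integrations by parts will rely on the exponential decay $m(x)\le \kappa_1 e^{-\kappa_2|x|}$ from Lemma \ref{mdecaylemma}, together with the global bound $\|\nabla u\|_{L^\infty(\mathbb R^N)}<\infty$ obtained by applying Lemma \ref{sect2-lemma21-gradientu} to \eqref{26preliminaryfinal} with $V\equiv 0$ and the bounded source $-m^\nu$. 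These two facts make every boundary term at infinity vanish after truncating with a cutoff $\chi_R$ and letting $R\to\infty$, and they make all the volume integrals below absolutely convergent.

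First I would multiply the FP equation in \eqref{26preliminaryfinal} by $u$ and integrate by parts to get $\int_{\mathbb R^N}\nabla u\cdot\nabla m\,dx=-C_H\gamma\int_{\mathbb R^N}m|\nabla u|^\gamma\,dx$. Testing HJB against $m$, integrating, replacing $\int m(-\Delta u)$ by the previous expression, and rearranging yields the \emph{energy identity}
\begin{equation*}
\lambda\int_{\mathbb R^N}m\,dx=C_H(\gamma-1)\int_{\mathbb R^N}m|\nabla u|^\gamma\,dx-\int_{\mathbb R^N}m^{\nu+1}\,dx.
\end{equation*}

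Next I would derive the Pohozaev identity by testing HJB against $x\cdot\nabla m$ and FP against $x\cdot\nabla u$, then adding the results. Standard double integration by parts gives the mixed relation $\int(-\Delta u)(x\cdot\nabla m)+\int(-\Delta m)(x\cdot\nabla u)=(2-N)\int\nabla u\cdot\nabla m$; the Hamiltonian contributions can be expanded using $\nabla(|\nabla u|^\gamma)=\gamma|\nabla u|^{\gamma-2}(D^2u)\nabla u$, after which the third-order cross terms cancel; the $\lambda$-term contributes $-N\lambda\int m$ and the $m^\nu$-term contributes $-\tfrac{N}{\nu+1}\int m^{\nu+1}$ via $\int m^\nu(x\cdot\nabla m)=-\tfrac{N}{\nu+1}\int m^{\nu+1}$. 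After cancellation and using the energy identity to eliminate $\lambda\int m$, the surviving relation is the clean Pohozaev identity $\gamma C_H\int m|\nabla u|^\gamma\,dx=\tfrac{N\nu}{\nu+1}\int m^{\nu+1}\,dx$. Finally, using $w=-C_H\gamma m|\nabla u|^{\gamma-2}\nabla u$ and the Fenchel relation between $H$ and $L$, a direct calculation shows $C_L|w/m|^{\gamma'}=(\gamma-1)C_H|\nabla u|^\gamma$ pointwise on $\{m>0\}$; integrating and using $\gamma'=\gamma/(\gamma-1)$ rewrites the Pohozaev identity as the middle equality of the lemma, and substituting back into the energy identity produces the first equality (with the denominator $(\nu+1)\gamma'$; the $(\alpha+1)$ in the statement being a notational slip for $(\nu+1)$).

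The main technical obstacle is justifying the two integrations by parts against the unbounded test functions $x\cdot\nabla m$ and $x\cdot\nabla u$. I would handle this by multiplying by a smooth radial cutoff $\chi_R$ supported in $B_{2R}$ and identically $1$ on $B_R$, performing all the manipulations on the truncated integrals, and then sending $R\to\infty$. The error surface integrals are of the form $\int_{B_{2R}\setminus B_R}|x|\,m|\nabla u|^{\gamma}\,dx$, $\int_{B_{2R}\setminus B_R}|x|\,m^{\nu+1}\,dx$ and $\int_{B_{2R}\setminus B_R}|x|\,m|\nabla u|^{\gamma-1}|D^2u|\,dx$; the first two vanish thanks to the exponential decay of $m$ and the boundedness of $\nabla u$, while for the third I would invoke elliptic $W^{2,p}_{\rm loc}$ estimates on HJB to control $D^2 u$ locally in $L^p$ and absorb it against the exponential decay of $m$. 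Together with the existence/regularity theory from Lemmas \ref{sect2-lemma21-gradientu}--\ref{lemma22preliminary}, this closes the proof.
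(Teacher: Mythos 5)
Your proposal is correct and follows the standard Pohozaev route (test HJB against $m$ for the energy identity, test the coupled system against $x\cdot\nabla m$ and $x\cdot\nabla u$ for the rescaling identity, and close using the pointwise relation $C_L|w/m|^{\gamma'}=(\gamma-1)C_H|\nabla u|^{\gamma}$, which indeed follows from $w=-C_H\gamma\,m|\nabla u|^{\gamma-2}\nabla u$, $(\gamma-1)\gamma'=\gamma$, and $C_L=\tfrac{1}{\gamma'}(\gamma C_H)^{1-\gamma'}$). The paper does not present a proof of this lemma — it is taken verbatim from Proposition 3.1 of the cited reference \cite{cirant2016stationary}, and the argument there is essentially the one you outline: your derivation of the mixed term $(2-N)\int\nabla u\cdot\nabla m$, the identity $\int m^{\nu}(x\cdot\nabla m)=-\tfrac{N}{\nu+1}\int m^{\nu+1}$, and the elimination of $\lambda$ via the energy identity all check out, and you are right that $(\alpha+1)$ in the first displayed identity is a typo for $(\nu+1)$. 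Your treatment of the growth of the test functions $x\cdot\nabla u$ and $x\cdot\nabla m$ through a cutoff, combined with the exponential decay $m\le\kappa_1 e^{-\kappa_2|x|}$ from Lemma~\ref{mdecaylemma} and the gradient bound from Lemma~\ref{sect2-lemma21-gradientu} (with $b=0$, using that $-m^{\nu}\in L^{\infty}$), is the appropriate way to justify all the integrations by parts, and invoking local $W^{2,p}$ regularity to absorb the $D^{2}u$ cross terms against $m$'s decay is the right technical device. No gap.
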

 
\section{Existence of ground states}\label{sec-existence}
In this section, we shall discuss the existence of ground states to system (\ref{ss1}) under some conditions of coefficients $\alpha_i$ with $i=1,2$ and $\beta$.  To this end, we first estimate the energy $\mathcal E_{\alpha_1,\alpha_2,\beta}(m_1,w_1,m_2,w_2)$ from below.  Then, if the energy is shown to have some finite lower bound and the minimizers is proved to exist, we will find the existence of ground states to (\ref{ss1}) by the standard duality argument.  Before stating our main results for the existence of minimizers, we give some preliminary definitions, which are
\begin{align}\label{problem51innotescopy}
e^i_{\alpha_i}:=\inf_{(m,w)\in\mathcal K_i}\mathcal E^i_{\alpha_i}(m,w),~i=1,2,
\end{align}
where $\mathcal K_i$ is given by \eqref{mathcalkidefined} and
\begin{align}\label{mathcalealphaii089}
\mathcal E_{\alpha_i}^i(m,w)=C_L\int_{\mathbb R^N}\bigg|\frac{w}{m}\bigg|^{\gamma'}m\,dx+\int_{\mathbb R^N}V_im\,dx-\frac{\alpha_i}{1+\frac{\gamma'}{N}}\int_{\mathbb R^N}m^{1+\frac{\gamma'}{N}}\,dx.
\end{align}
Concerning the existence of ground states in (\ref{ss1}), we have
\begin{lemma}\label{lemma31existenceleastenergy}
Assume all conditions in Theorem \ref{thm11multi} hold, then we have
\begin{itemize}
    \item[(i).] if $0<\alpha_1<a^*$, $0<\alpha_2<a^*$ and $-\infty<\beta<\beta_*:=\sqrt{(a^*-\alpha_1)(a^*-\alpha_2)}$, then problem (\ref{problem1p1}) has a global minimizer $(m_{1,\textbf{a}},w_{1,\textbf{a}},m_{2,\textbf{a}},w_{2,\textbf{a}})\in \mathcal K$;
\item[(ii). ] either $\alpha_1>a^*$ or $\alpha_2>a^*$ or $\beta>\beta^*:=\frac{2a^*-\alpha_1-\alpha_2}{2},$ then problem (\ref{problem1p1}) has no minimizer.
\end{itemize}
\end{lemma}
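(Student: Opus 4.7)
I would use the direct method of the calculus of variations. The coupling term is controlled by Young's inequality: for any $\theta>0$,
$$2\, m_1^{\frac12+\frac{\gamma'}{2N}}m_2^{\frac12+\frac{\gamma'}{2N}}\leq \theta\, m_1^{1+\frac{\gamma'}{N}}+\theta^{-1} m_2^{1+\frac{\gamma'}{N}}.$$
When $\beta>0$, combining this with the Gagliardo--Nirenberg inequality (\ref{GNinequalityused}) applied to each species separately yields
$$\mathcal E_{\alpha_1,\alpha_2,\beta}\geq\sum_{i=1,2}\Big(1-\frac{\alpha_i+\beta\theta_i}{a^*}\Big)C_L\int_{\mathbb R^N}\Big|\frac{w_i}{m_i}\Big|^{\gamma'}m_i\,dx +\sum_{i=1,2}\int_{\mathbb R^N}V_im_i\,dx,$$
with $\theta_1:=\theta$, $\theta_2:=\theta^{-1}$. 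The assumption $\beta<\beta_*=\sqrt{(a^*-\alpha_1)(a^*-\alpha_2)}$ is precisely what permits the choice $\theta\in(\beta/(a^*-\alpha_2),(a^*-\alpha_1)/\beta)$, making both prefactors strictly positive. The case $\beta\leq 0$ is easier, since the coupling already has the favorable sign. Thus $e_{\alpha_1,\alpha_2,\beta}>-\infty$, and for any minimizing sequence $(m_i^n,w_i^n)$ the kinetic energies and potential integrals $\int V_im_i^n\,dx$ are uniformly bounded.

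\textbf{Compactness and limit passage.} Lemma \ref{lemma21-crucial-cor} applied with $\hat q=\gamma'$ (recall the standing assumption $\gamma'>N$) gives uniform $W^{1,\gamma'}$-bounds on $m_i^n$ and $L^1\cap L^{\gamma'}$-bounds on $w_i^n$. Coercivity of $V_i$ from (H1) together with the bound on $\int V_im_i^n\,dx$ produces tightness: $\int_{|x|>R}m_i^n\,dx\to 0$ uniformly in $n$ as $R\to\infty$, which preserves the unit mass in the limit. Passing to a subsequence, $m_i^n\to m_i$ strongly in every $L^p(\mathbb R^N)$ with $p\geq 1$ (by tightness combined with the local compact embedding $W^{1,\gamma'}\hookrightarrow L^p_{\rm loc}$), while $w_i^n\rightharpoonup w_i$ weakly in $L^{\gamma'}$. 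The convex functional $(m,w)\mapsto\int|w/m|^{\gamma'}m\,dx$ is weakly lower semicontinuous, $\int V_im\,dx$ is lower semicontinuous by Fatou, and strong $L^p$-convergence handles both the intra-population and the coupling nonlinearities. Hence $(m_i,w_i)$ is a minimizer.

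\textbf{Non-existence via concentration scaling (part (ii)).} Given any admissible $(m_0,w_0)$, the rescaling $m_R(x):=R^Nm_0(Rx)$, $w_R(x):=R^{N+1}w_0(Rx)$ preserves $\int m_R=1$ and the continuity constraint defining $\mathcal K_i$, multiplies the kinetic energy and $\int m^{1+\gamma'/N}\,dx$ each by $R^{\gamma'}$, and keeps $\int V_im_R\,dx=\int V_i(x/R)m_0\,dx$ bounded as $R\to\infty$. Take $(m_0,w_0)$ to be an optimizer of (\ref{GNinequalitybest}), renormalized so $\int m_0=1$ (still an extremizer by scale invariance of the Gagliardo--Nirenberg quotient), so that $C_L\int|w_0/m_0|^{\gamma'}m_0\,dx=\frac{Na^*}{N+\gamma'}\int m_0^{1+\gamma'/N}\,dx$. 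If $\alpha_1>a^*$, set $(m_1,w_1):=(m_R,w_R)$ and fix any admissible $(m_2,w_2)$ whose support is disjoint from ${\rm supp}\,m_R$ for large $R$ (possible because $m_R$ concentrates at the origin); the cross term vanishes and
$$\mathcal E=R^{\gamma'}\,\frac{N(a^*-\alpha_1)}{N+\gamma'}\int_{\mathbb R^N}m_0^{1+\frac{\gamma'}{N}}\,dx+O(1)\to-\infty.$$
The case $\alpha_2>a^*$ is symmetric. If instead $\beta>\beta^*$, take $(m_1,w_1)=(m_2,w_2)=(m_R,w_R)$; the cross-term integrand collapses to $m_R^{1+\gamma'/N}$ and the analogous computation gives
$$\mathcal E=R^{\gamma'}\,\frac{N(2a^*-\alpha_1-\alpha_2-2\beta)}{N+\gamma'}\int_{\mathbb R^N}m_0^{1+\frac{\gamma'}{N}}\,dx+O(1)\to-\infty,$$
since $2a^*-\alpha_1-\alpha_2-2\beta<0$ in this regime.

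\textbf{Main obstacle.} The most delicate step is the limit passage in part (i) in the attractive sub-critical regime $0<\beta<\beta_*$: the coupling is a sub-critical interpolation of two mass-critical nonlinearities, so its continuity under merely weak convergence is not automatic. Upgrading to strong $L^p$-convergence of the $m_i^n$ and preserving unit mass in the limit requires the tightness produced by (H1)--(H2) to be combined carefully with the $W^{1,\gamma'}$-bounds from Lemma \ref{lemma21-crucial-cor}, and the Young-inequality juggling with $\theta$ must match the sharp threshold $\beta_*$ exactly in order to retain coercivity in both kinetic energies simultaneously.
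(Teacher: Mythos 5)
Your overall strategy coincides with the paper's: coercivity plus compactness for existence in (i), concentrating test pairs for non-existence in (ii). Part~(i) is correct; your use of Young's inequality with a free parameter $\theta$ to absorb the cross term is a mild repackaging of the paper's argument (which applies AM--GM to $(a^*-\alpha_1)\int m_1^{1+\gamma'/N}+(a^*-\alpha_2)\int m_2^{1+\gamma'/N}$ and then Cauchy--Schwarz to the cross integral, landing directly on the lower bound $\frac{2(\beta_*-\beta)N}{N+\gamma'}\int m_1^{\frac12+\frac{\gamma'}{2N}}m_2^{\frac12+\frac{\gamma'}{2N}}$). Both routes identify the sharp threshold $\beta_*$ and the compactness argument you give mirrors the paper's.

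In part~(ii), the $\beta>\beta^*$ case (taking $m_1=m_2=m_R$) is exact and fine. The $\alpha_1>a^*$ case, however, contains a genuine flaw in the stated justification: you ask for an admissible $(m_2,w_2)$ ``whose support is disjoint from $\operatorname{supp} m_R$ for large $R$,'' and then assert the cross term vanishes. This cannot happen: any $(m_0,w_0)\in\mathcal M$ solves the potential-free system (\ref{equmpotentialfree}), and the paper notes (via the maximum principle, as in \cite{bernardini2023ergodic}) that $m_0>0$ everywhere, so $\operatorname{supp} m_R=\mathbb R^N$ for every $R$ and the cross term is never zero. The paper disposes of it with a one-line H\"older estimate: fixing the second species (e.g.\ a normalized exponential $\bar m$ as in the paper, so $(\bar m,\nabla\bar m)\in\mathcal K_2$), one has
\begin{align*}
\int_{\mathbb R^N} m_R^{\frac12+\frac{\gamma'}{2N}}\bar m^{\frac12+\frac{\gamma'}{2N}}\,dx
\leq \Big(\int_{\mathbb R^N} m_R^{1+\frac{\gamma'}{N}}\,dx\Big)^{\frac12}\Big(\int_{\mathbb R^N}\bar m^{1+\frac{\gamma'}{N}}\,dx\Big)^{\frac12}
= O\big(R^{\gamma'/2}\big),
\end{align*}
which is strictly lower order than the dominant negative $R^{\gamma'}$ term, and the energy still diverges to $-\infty$. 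So your conclusion is right, but the ``disjoint support'' step must be replaced by this estimate (or by a compactly supported second species placed away from the origin, together with the exponential decay of $m_0$ from Lemma~\ref{mdecaylemma}, plus a verification that such a pair is admissible with finite kinetic energy).
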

\begin{proof}
(i).  Invoking inequality (\ref{GNinequalityused}) and condition (\ref{Vicondition1}) satisfied by $V_i$ with $i=1,2$, we have for any $(m_1,w_1,m_2,w_2)\in\mathcal K$,
\begin{align}\label{citeinequality1}
&\mathcal E_{\alpha_1,\alpha_2,\beta}(m_1,w_1,m_2,w_2)\nonumber\\
\geq &  \sum_{i=1}^2\int_{\mathbb R^N}V_im_i\,dx+\frac{N}{N+\gamma'}\bigg[\sum_{i=1}^2(a^*-\alpha_i)\int_{\mathbb R^N}m_i^{1+\frac{\gamma'}{N}}\,dx-2\beta\int_{\mathbb R^N} m_1^{\frac{1}{2}+\frac{\gamma'}{2N}}m_2^{\frac{1}{2}+\frac{\gamma'}{2N}}\,dx\bigg]\nonumber\\
\geq &\frac{2(\beta_*-\beta)N}{N+\gamma'}\int_{\mathbb R^N}m_1^{\frac{1}{2}+\frac{\gamma'}{2N}}m_2^{\frac{1}{2}+\frac{\gamma'}{2N}}\,dx,
\end{align}
where $\mathcal E_{\alpha_1,\alpha_2,\beta}$ is given by (\ref{energy1p3}).  Then, letting $\{(m_{1,k},w_{1,k},m_{2,k},w_{2,k}\}\subset \mathcal K$ with $k\in\mathbb Z^+$ being a minimizing sequence of $e_{\alpha_1,\alpha_2,\beta}$, one has from (\ref{citeinequality1}) and $-\infty<\beta<\beta_*$ that
\begin{align}\label{between31and32embedding}
\sup_{k}\sum_{i=1}^2 \int_{\mathbb R^N} V_im_{i,k}\,dx<+\infty,~~\sup_{k}\int_{\mathbb R^N} m_{1,k}^{\frac{1}{2}+\frac{\gamma'}{2N}}m_{2,k}^{\frac{1}{2}+\frac{\gamma'}{2N}}\,dx<+\infty,
\end{align}
and then
\begin{align}\label{moreimportantthm11justminimizer}
\sup_{k}\sum_{i=1}^2\int_{\mathbb R^N}\bigg|\frac{w_{i,k}}{m_{i,k}}\bigg|^{\gamma'}m_{i,k}\,dx<+\infty.
\end{align}
Thanks to Lemma \ref{lemma21-crucial-cor} and (\ref{moreimportantthm11justminimizer}), one obtains as $k\rightarrow +\infty,$ for $i=1,2$,
\begin{align*}
(m_{i,k},w_{i,k})\rightharpoonup (m_{i,\textbf{a}},w_{i,\textbf{a}})\text{~in~}W^{1,\gamma'}(\mathbb R^N)\times L^{\gamma'}(\mathbb R^N).
\end{align*}
Moreover, by the compactly Sobolev embedding (C.f. Lemma 5.1 in \cite{cirant2024critical}) and Fatou's lemma, we find from (\ref{between31and32embedding}) that
 \begin{align*}
m_{i,k}\rightarrow m_{i,\textbf{a}}\text{~in~}L^{1+\frac{\gamma'}{N}}(\mathbb R^N)\cap L^1(\mathbb R^N).
\end{align*}
Then it follows that $(m_{1,\textbf{a}},w_{1,\textbf{a}},m_{2,\textbf{a}},w_{2,\textbf{a}})\in \mathcal K$ is a minimizer.

(ii). Let $\mathcal M$ be given by (\ref{mathcalMdefinedbythm17}).
Since $\gamma'>N$, by using Morrey's embedding, the standard elliptic regularity and the maximum principle, one follows the idea shown in \cite{bernardini2023ergodic} then obtain for any $(m,w)\in \mathcal M$, $m(x)>0$ for all $x\in\mathbb R^N.$  Next, we utilize some rescaled pair of $(m_0,w_0)\in \mathcal M$ to analyze the bound of $\mathcal E_{\alpha_1,\alpha_2,\beta}$ from below.

Let $(m_0,w_0)\in \mathcal M$ and define
\begin{align}\label{byusingintro1}
(m_t,w_t)=\bigg(\frac{t^N}{M^*}m_0(t(x-x_0)),\frac{t^{N+1}}{M^*}w_0(t(x-x_0))\bigg),~\text{for~}  t>0\text{~and~} x_0\in\mathbb R^N.
\end{align}
From Lemma \ref{mdecaylemma} and Lemma \ref{poholemma}, we have that 
\begin{align}\label{combine20247131}
C_L\int_{\mathbb R^N}\bigg|\frac{w_0}{m_0}\bigg|^{\gamma'}m_0\,dx=1,~~\int_{\mathbb R^N}m_0^{1+\frac{\gamma'}{N}}\,dx=\frac{N+\gamma'}{N},~~\int_{\mathbb R^N}m_0\,dx=M^*.
\end{align}
Combining (\ref{byusingintro1}) with (\ref{combine20247131}), one finds
\begin{align}\label{onefindsintro1}
C_L\int_{\mathbb R^N}\bigg|\frac{w_t}{m_t}\bigg|^{\gamma'}m_t\,dx=C_L\frac{t^{\gamma'}}{M^*}\int_{\mathbb R^N}\bigg|\frac{w_0}{m_0}
\bigg|^{\gamma'}m_0\,dx=\frac{t^{\gamma'}}{M^*},
\end{align}
and 
\begin{align}\label{onefindsintro2}
\int_{\mathbb  R^N}m_t^{1+\frac{\gamma'}{N}}\,dx=\frac{t^{N(1+\frac{\gamma'}{N})}}{(M^*)^{1+\frac{\gamma'}{N}}}\int_{\mathbb R^N}m^{1+\frac{\gamma'}{N}}_0(tx)\,dx=\frac{N+\gamma'}{N}\frac{t^{\gamma'}}{(M^*)^{1+\frac{\gamma'}{N}}}.
\end{align}
Then it follows from (\ref{mathcalealphaii089}), (\ref{byusingintro1}), (\ref{onefindsintro1}) and (\ref{onefindsintro2}) that 
\begin{align}\label{colecting1intro}
\mathcal E^1_{\alpha_1}(m_t,w_t)=&C_L\int_{\mathbb R^N}\bigg|\frac{w_t}{m_t}\bigg|^{\gamma'}m_t\,dx-\frac{N\alpha_1}{N+\gamma'}\int_{\mathbb R^N}m_t^{1+\frac{\gamma'}{N}}\,dx+\int_{\mathbb R^N}V_1m_t\,dx\nonumber\\
=&\frac{t^{\gamma'}}{M^*}\bigg(1-\frac{\alpha_1}{a^*}\bigg)+\frac{1}{M^*}\int_{\mathbb R^N}V\bigg(\frac{x}{t}+x_0\bigg)m_0\,dx.
\end{align}
On the other hand, we choose 
\begin{align*}
\bar m=\frac{e^{-\delta_1|x|}}{\Vert e^{-\delta_1 |x|}\Vert_{L^1}},~~\bar w=\nabla \bar m\text{ with }(\bar m,\bar w)\in  \mathcal K_2,
\end{align*}
and apply H\"{o}lder's inequality to get 
\begin{align}\label{colecting2intro}
\int_{\mathbb R^N}m_t^{\frac{1}{2}+\frac{\gamma'}{2N}}\bar m^{\frac{1}{2}+\frac{\gamma'}{2N}}\,dx\leq &\bigg(\int_{\mathbb R^N}m_t^{1+\frac{\gamma'}{N}}\,dx\bigg)^{\frac{1}{2}}\bigg(\int_{\mathbb R^N}\bar m^{1+\frac{\gamma'}{N}}\,dx\bigg)^{\frac{1}{2}}
\leq  Ct^{\frac{\gamma'}{2}},
\end{align}
where $C>0$ is some constant.  Upon collecting (\ref{colecting1intro}) and (\ref{colecting2intro}), we obtain if $\alpha_1>a^*$,
\begin{align*}
\mathcal E_{\alpha_1,\alpha_2,\beta}(m_t,w_t,\bar m,\bar w)\geq \frac{t^{\gamma'}}{M^*}\bigg(1-\frac{\alpha_1}{a^*}\bigg)-Ct^{\frac{\gamma'}{2}}-C\rightarrow -\infty,\text{~as } t\rightarrow+\infty.
\end{align*}
Thus,
$e_{\alpha_1,\alpha_2,\beta}=-\infty \text{~when~}\alpha_1>a^*.$
Similarly, we find if $\alpha_2>a^*,$ then $e_{\alpha_1,\alpha_2,\beta}=-\infty$.
Consequently, we have if any $\alpha_i>a^*$ or $\alpha_2>a^*$, problem (\ref{problem1p1}) does not have a minimizer.

It is left to study the case of $\beta>\beta^*$.  To this end, we compute and obtain
\begin{align*}
\mathcal E_{\alpha_1,\alpha_2,\beta}(m_t,w_t,m_t,w_t)=&\frac{t^{\gamma'}}{M^*}\bigg(2-\frac{\alpha_1}{a^*}-\frac{\alpha_2}{a^*}-\frac{2\beta}{a^*}\bigg)+O(1)\rightarrow -\infty,\text{ as } t\rightarrow +\infty,
\end{align*}
 when $\beta>\beta^*:=\frac{2a^*-\alpha_1-\alpha_2}{2}.$  This completes the proof.

\end{proof}

Lemma \ref{lemma31existenceleastenergy} states some existence results for the global minimizers $(m_1,w_1,m_2,w_2)$ to (\ref{problem1p1}) under some conditions of $\alpha_1$, $\alpha_2$ and $\beta$.  In particular, when intra-population and inter-population coefficients are all small, Lemma \ref{lemma31existenceleastenergy} implies there exists a minimizer to (\ref{problem1p1}).  Whereas, the existence of ground states to (\ref{ss1}) can not be shown unless $(u_1,u_2)$ and $(\lambda_1,\lambda_2)$ are obtained.    Hence, to finish the proof of Theorem \ref{thm11multi}, we establish the following lemma for the existence of the value function pair $(u_1,u_2)$ and Lagrange multipliers $(\lambda_1,\lambda_2)$:
\begin{lemma}\label{lemma32multimfg}
    Let $(m_{1,\textbf{a}},{{w}}_{1,\textbf{a}},m_{2,\textbf{a}},{{w}}_{2,\textbf{a}})\in\mathcal K$ be a minimizer of $e_{\alpha_1,\alpha_2,\beta}$ with $\mathcal K=\mathcal K_1\times \mathcal K_2$ defined by (\ref{mathcalkidefined}), then there exist $(u_{1,\textbf{a}},u_{2,\textbf{a}})\in \big(C^2(\mathbb R^N)\big)^2$ and $(\lambda_{1,\textbf{a}},\lambda_{2,\textbf{a}})\in\mathbb {R}^2$ such that $(m_{1,\textbf{a}},u_{1,\textbf{a}},m_{2,\textbf{a}},u_{2,\textbf{a}},\lambda_{1,\textbf{a}},\lambda_{2,\textbf{a}})$ solves
\begin{align}\label{ss11new}
\left\{\begin{array}{ll}
-\Delta u_1+C_H|\nabla u_1|^{\gamma}+\lambda_1=V_1(x)-\alpha_1 m_1^{\frac{\gamma'}{N}}-\beta m_1^{\frac{\gamma'}{2N}-\frac{1}{2}}m_2^{\frac{1}{2}+\frac{\gamma'}{N}},&x\in\mathbb R^N,\\
\Delta m_1+\nabla\cdot {{w}}_1=0,~{{w}}_1=-\gamma C_Hm_1|\nabla u_1|^{\gamma-2}\nabla u_1&x\in\mathbb R^N,\\
-\Delta u_2+C_H|\nabla u_2|^{\gamma}+\lambda_2=V_2(x)-\alpha_2 m_2^{\frac{\gamma'}{N}}-\beta m_2^{\frac{\gamma'}{2N}-\frac{1}{2}}m_1^{\frac{1}{2}+\frac{\gamma'}{N}},&x\in\mathbb R^N,\\
\Delta m_2+\nabla\cdot {{w}}_2=0,~{{w}}_2=-\gamma C_Hm_2|\nabla u_2|^{\gamma-2}\nabla u_2,&x\in\mathbb R^N.
\end{array}
\right.
\end{align}
  Moreover, we have the following identities and estimates hold:
\begin{small}
\begin{align}\label{lambda177innotes}
\lambda_{i,\textbf{a}}=C_L\int_{\mathbb R^N}\bigg|\frac{w_{i,\textbf{a}}}{m_{i,\textbf{a}}}\bigg|^{\gamma'}m_{i,\textbf{a}}\,dx+\int_{\mathbb R^N}V_im_{i,\textbf{a}}\,dx-\alpha_i\int_{\mathbb R^N}m_{i,\textbf{a}}^{1+\frac{\gamma'}{N}}\,dx-\beta\int_{\mathbb R^N}m_{1,\textbf{a}}^{\frac{1}{2}+\frac{\gamma'}{2N}}m_{2,\textbf{a}}^{\frac{1}{2}+\frac{\gamma'}{2N}}\,dx,~i=1,2,
\end{align}
\end{small}
and there exists a constant $C>0$ such that 
\begin{align}\label{gradientuiViloweboun}
|\nabla u_{i,\textbf{a}}(x)|\leq C\Big(1+V_i^{\frac{1}{\gamma}}(x)\Big),~~u_{i,\textbf{a}}(x)\geq C V_i^{\frac{1}{\gamma}}-C,\text{ for all }x\in\mathbb R^N,~i=1,2.
\end{align}

\end{lemma}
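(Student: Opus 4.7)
The strategy is a Lagrange-multiplier / convex-duality argument: $u_{i,\textbf{a}}$ will serve as the multiplier dual to the Fokker--Planck constraint encoded in $\mathcal K_i$, and $\lambda_{i,\textbf{a}}$ as the multiplier dual to the mass constraint $\int m_i\,dx=1$. The bridge between the resulting optimality system and the MFG system (\ref{ss11new}) is Fenchel duality between $L(v)=C_L|v|^{\gamma'}$ and $H(p)=C_H|p|^{\gamma}$, whose equality case is exactly $w=-\gamma C_H m|\nabla u|^{\gamma-2}\nabla u$.

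First I would make sense of the coupling as data. Because $\gamma'>N$ and $(m_{i,\textbf{a}},w_{i,\textbf{a}})$ has finite energy, Lemma \ref{lemma21-crucial-cor} gives $m_{i,\textbf{a}}\in W^{1,\gamma'}(\mathbb R^N)$, and Morrey embedding upgrades this to $m_{i,\textbf{a}}\in C^{0,\theta}(\mathbb R^N)\cap L^\infty(\mathbb R^N)$. Consequently
$$F_i(x):=V_i(x)-\alpha_i m_{i,\textbf{a}}^{\gamma'/N}-\beta\, m_{i,\textbf{a}}^{\gamma'/(2N)-1/2}\,m_{j,\textbf{a}}^{1/2+\gamma'/N}$$
is locally Hölder continuous and differs from $V_i$ by a globally bounded quantity (that vanishes at infinity, since $m_{i,\textbf{a}}\to 0$). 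Applying Lemma \ref{lemma22preliminary}(iii) to the scalar HJB equation $-\Delta u+C_H|\nabla u|^{\gamma}+\lambda=F_i(x)$ then produces a classical solution $(u_{i,\textbf{a}},\lambda_{i,\textbf{a}})\in C^2(\mathbb R^N)\times \mathbb R$ with $u_{i,\textbf{a}}$ bounded from below. Since $F_i-V_i$ is bounded, Lemma \ref{sect2-lemma21-gradientu} and Lemma \ref{lowerboundVkgenerallemma22} apply with $V_i$ itself on the right-hand side and yield both estimates in (\ref{gradientuiViloweboun}).

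Next I would identify $u_{i,\textbf{a}}$ as the correct multiplier so as to extract both the Fokker--Planck equation and the identity (\ref{lambda177innotes}). For any admissible $(m,w)\in \mathcal K_i$ the Fenchel inequality gives pointwise
$$C_L\bigl|w/m\bigr|^{\gamma'}m+C_H|\nabla u_{i,\textbf{a}}|^{\gamma}m+w\cdot\nabla u_{i,\textbf{a}}\geq 0,$$
with equality iff $w=-\gamma C_H\, m|\nabla u_{i,\textbf{a}}|^{\gamma-2}\nabla u_{i,\textbf{a}}$. Using the HJB equation to rewrite $-\Delta u_{i,\textbf{a}}+C_H|\nabla u_{i,\textbf{a}}|^{\gamma}=F_i-\lambda_{i,\textbf{a}}$, and pairing the Fokker--Planck constraint against $u_{i,\textbf{a}}$ to obtain $\int\nabla u_{i,\textbf{a}}\cdot\nabla m_{i,\textbf{a}}\,dx=\int\nabla u_{i,\textbf{a}}\cdot w_{i,\textbf{a}}\,dx$, integration of the Fenchel inequality and summation over $i=1,2$ produces a lower bound for $\mathcal E_{\alpha_1,\alpha_2,\beta}$ in terms of $\lambda_{1,\textbf{a}}+\lambda_{2,\textbf{a}}$ plus purely variational cross-terms. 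The minimality of $(m_{i,\textbf{a}},w_{i,\textbf{a}})$ then forces pointwise equality in Fenchel, yielding the Fokker--Planck relation $w_{i,\textbf{a}}=-\gamma C_H\, m_{i,\textbf{a}}|\nabla u_{i,\textbf{a}}|^{\gamma-2}\nabla u_{i,\textbf{a}}$. Integrating the HJB equation against $m_{i,\textbf{a}}$ and invoking
$$\int\bigl(-\Delta u_{i,\textbf{a}}+C_H|\nabla u_{i,\textbf{a}}|^{\gamma}\bigr)m_{i,\textbf{a}}\,dx=-(\gamma-1)C_H\!\int m_{i,\textbf{a}}|\nabla u_{i,\textbf{a}}|^{\gamma}\,dx=-C_L\!\int m_{i,\textbf{a}}\bigl|w_{i,\textbf{a}}/m_{i,\textbf{a}}\bigr|^{\gamma'}\,dx$$
(the algebraic content of Lemma \ref{poholemma}) now delivers (\ref{lambda177innotes}).

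The main technical obstacle is the rigorous justification of the integration-by-parts identities above: $u_{i,\textbf{a}}$ grows at most polynomially in $|x|$ via (\ref{gradientuiViloweboun}), while $m_{i,\textbf{a}}$ is a priori only in $W^{1,\gamma'}\cap L^1$, and the Fokker--Planck constraint in $\mathcal K_i$ is posed only for test functions in $C_c^\infty(\mathbb R^N)$. The standard remedy is a smooth cutoff $\chi_R$: pair the FP constraint against $\chi_R u_{i,\textbf{a}}$, control the cutoff remainder using $|\nabla u_{i,\textbf{a}}|\lesssim 1+V_i^{1/\gamma}$ together with $\int V_i m_{i,\textbf{a}}\,dx<\infty$ and $m_{i,\textbf{a}}\in L^\infty$, and pass to $R\to\infty$. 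An analogous truncation legitimates testing the HJB equation against $m_{i,\textbf{a}}$, after which the duality-based identification described in the previous paragraph closes the argument.
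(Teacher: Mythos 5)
Your overall strategy matches the paper's: construct $u_{i,\textbf{a}}$ as a classical solution of the HJB with right-hand side $F_i$, use the Fenchel inequality between $L(v)=C_L|v|^{\gamma'}$ and $H(p)=C_H|p|^{\gamma}$ to link the duality residual to the Fokker--Planck relation $w=-\gamma C_Hm|\nabla u|^{\gamma-2}\nabla u$, and control integration by parts via cutoffs using the gradient and lower-bound estimates (\ref{gradientuiViloweboun}). But there is a genuine gap at the central step, namely the sentence ``the minimality of $(m_{i,\textbf{a}},w_{i,\textbf{a}})$ then forces pointwise equality in Fenchel.''

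Integrating your Fenchel inequality, using the extended FP weak formulation against $u_{i,\textbf{a}}$, and then substituting the HJB equation gives only the one-sided inequality
\begin{align*}
\tilde J_i(m_{i,\textbf{a}},w_{i,\textbf{a}})
:=\int_{\mathbb R^N}\Big[C_L\Big|\tfrac{w_{i,\textbf{a}}}{m_{i,\textbf{a}}}\Big|^{\gamma'}m_{i,\textbf{a}}+\big(V_i+f_i(m_{1,\textbf{a}},m_{2,\textbf{a}})\big)m_{i,\textbf{a}}\Big]\,dx
\;\geq\;\lambda_{i,\textbf{a}},
\end{align*}
and the Fenchel equality case would follow only if this held with equality. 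You cannot conclude equality directly from minimality of $\mathcal E_{\alpha_1,\alpha_2,\beta}$, for two reasons. First, $\mathcal E$ is \emph{not} $\tilde J_1+\tilde J_2$: the self-coupling and cross terms in $\mathcal E$ carry the prefactor $\tfrac{N}{N+\gamma'}$, whereas $\tilde J_i$ has the ``linearized'' prefactors $\alpha_i$, $\beta$ coming from differentiating $m^{1+\gamma'/N}$ and the cross term at $m_{i,\textbf{a}}$; minimizing $\mathcal E$ does not \emph{a priori} say $(m_{i,\textbf{a}},w_{i,\textbf{a}})$ minimizes $\tilde J_i$. Second, even granting that $(m_{i,\textbf{a}},w_{i,\textbf{a}})$ minimizes $\tilde J_i$, the constant $\lambda_{i,\textbf{a}}$ produced by Lemma~\ref{lemma22preliminary}(iii) is a critical ergodic constant for the HJB, and there is no \emph{a priori} reason it coincides with $\min\tilde J_i$. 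The paper closes both gaps explicitly: it first shows via a convexity argument along the segment $m_\lambda=\lambda m+(1-\lambda)m_{1,\textbf{a}}$ that the minimizer of $\mathcal E$ also minimizes the linearized functional $\tilde J_1$ over $\mathcal B_1$ (equation (\ref{tildeJ1m1w1attained})), and then a Lagrangian saddle-point (min-max) computation proves $\sup\{\lambda:\exists\psi,\ -\Delta\psi+C_H|\nabla\psi|^\gamma+\lambda\le V_1+f_1\}=\min_{\mathcal B_1}\tilde J_1$ (equation (\ref{ourclaimhowtogetmulti})), which identifies $\lambda_{1,\textbf{a}}$ with $\tilde J_1(m_{1,\textbf{a}},w_{1,\textbf{a}})$ and simultaneously yields (\ref{lambda177innotes}). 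Without these two lemmas your proof does not deliver the crucial nonnegative-integrand-with-zero-integral identity that pins down the equality case in Fenchel.
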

\begin{proof}
To prove this lemma, we follow the approaches employed to show Proposition 3.4 in \cite{cesaroni2018concentration} and make slight modifications.  Define admissible sets $\mathcal A_i$ as 
\begin{align*}
\mathcal A_i=\bigg\{\psi\in C^2(\mathbb R^N)\bigg|\limsup_{|x|\rightarrow \infty}\frac{|\nabla \psi|}{V_i^{\frac{1}{\gamma}}}<+\infty,~\limsup_{|x|\rightarrow \infty}\frac{|\Delta \psi|}{V_i}<+\infty~\bigg\},~~i=1,2,
\end{align*}
then we proceed the similar argument shown in the proof of Proposition 5.1 in \cite{cirant2024critical} and obtain 
\begin{align}\label{80innotessecondeqibp}
-\int_{\mathbb R^N}m_{i,\textbf{a}}\Delta\psi\,dx=\int_{\mathbb R^N}{w}_{i,\textbf{a}}\cdot \nabla\psi\,dx,~~\forall \psi\in \mathcal A_i,~i=1,2.
\end{align}

Next, we define 
\begin{align}\label{definitionJ1tilde}
\tilde J_{1}(m,{w}):=\int_{\mathbb R^N}\bigg[C_L\bigg|\frac{w}{m}\bigg|^{\gamma'}m+[V_1(x)+f_1(m_{1,\textbf{a}},m_{2,\textbf{a}})]m\bigg]\,dx,
\end{align}
where 
\begin{align*}
f_1(m_{1,\textbf{a}},m_{2,\textbf{a}}):=-\alpha_1m_{1,\textbf{a}}^{\frac{\gamma'}{N}}-\beta m_{2,\textbf{a}}^{\frac{\gamma'}{2N}+\frac{1}{2}}m_{1,\textbf{a}}^{\frac{\gamma'}{2N}-\frac{1}{2}},
\end{align*}
and set
\begin{align*}
\mathcal B_i:=\bigg\{&(m,w)\in (L^1(\mathbb R^N)\cap W^{1,\gamma'}(\mathbb R^N))\times L^{\gamma'}(\mathbb R^N)\bigg|-\int_{\mathbb R^N}m\Delta \psi\,dx=\int_{\mathbb R^N}{w}\cdot\nabla\psi\,dx~,\forall \psi\in \mathcal A_i,\\
&m\geq 0\text{~a.e.~in~}\mathbb R^N,~\int_{\mathbb R^N}m\,dx=1,~\int_{\mathbb R^N}V_im\,dx<+\infty,~\int_{\mathbb R^N} |{w}|V_i^{\frac{1}{\gamma'}}\,dx<+\infty \bigg\},~i=1,2.
\end{align*}
We have the fact that $(m_{1,\textbf{a}},{w}_{1,\textbf{a}},m_{2,\textbf{a}},{w}_{2,\textbf{a}})$ is a minimizer of $\mathcal E_{\alpha_1,\alpha_2,\beta}$ in $\mathcal B_1\times \mathcal B_2,$ i.e.
\begin{align}\label{minimumB1B2K1K2}
e_{\alpha_1,\alpha_2,\beta}:=\inf_{(m_1,{w}_1,m_2,{w}_2)\in\mathcal K_1\times \mathcal K_2} \mathcal E_{\alpha_1,\alpha_2,\beta}(m_1,{w}_1,m_2,{w}_2)=\inf_{(m_1,{w}_1,m_2,{w}_2)\in\mathcal B_1\times \mathcal B_2}\mathcal E_{\alpha_1,\alpha_2,\beta}(m_1,{w}_1,m_2,{w}_2).
\end{align}
Now, we claim 
\begin{align}\label{tildeJ1m1w1attained}
\tilde J_{1}(m_{1,\textbf{a}},{w}_{1,\textbf{a}})=\min_{(m,{w})\in \mathcal B_1}\tilde J_1(m,w),
\end{align}
where $\tilde J_1$ is defined by (\ref{definitionJ1tilde}).
Indeed, we set 
\begin{align}\label{definitionofj1mandw}
J_1(m,{w}):=\mathcal E_{\alpha_1,\alpha_2,\beta}(m,{w},m_{2,\textbf{a}},{w}_{2,\textbf{a}}):=\varphi(m,{w})+\Lambda(m)+\tilde G,
\end{align}
where 
\begin{align*}
\varphi(m,{w}):=C_L\int_{\mathbb R^N}m\bigg|\frac{w}{m}\bigg|^{\gamma'}\,dx,
\end{align*}
\begin{align*}
\Lambda(m):=-\frac{N\alpha_1}{N+\gamma'}\int_{\mathbb R^N}m^{1+\frac{\gamma'}{N}}\,dx+\int_{\mathbb R^N}V_1m\,dx-\frac{2\beta N}{N+\gamma'}\int_{\mathbb R^N}m^{\frac{1}{2}+\frac{\gamma'}{2N}}m_{2,\textbf{a}}^{\frac{1}{2}+\frac{\gamma'}{2N}}\,dx,
\end{align*}
and
\begin{align*}
\tilde G:=C_L\int_{\mathbb R^N}\bigg|\frac{w_{2,\textbf{a}}}{m_{2,\textbf{a}}}\bigg|^{\gamma'}m_{2,\textbf{a}}\,dx-\frac{N\alpha_1}{\gamma'+N}\int_{\mathbb R^N}m_{2,\textbf{a}}^{1+\frac{\gamma'}{N}}\,dx+\int_{\mathbb R^N}V_2m_{2,\textbf{a}}\,dx.
\end{align*}
For any $(m,{w})\in \mathcal B_1$, we define
\begin{align*}
m_{\lambda}=\lambda m+(1-\lambda)m_{1,\textbf{a}},~w_{\lambda}=\lambda w+(1-\lambda)w_{1,\textbf{a}},~0<\lambda<1,
\end{align*}
and have the fact that  $(m_{\lambda},w_{\lambda})\in \mathcal B_1$.  Thus, by using (\ref{minimumB1B2K1K2}) and (\ref{definitionofj1mandw}), we obtain
\begin{align*}
J_1(m_{\lambda},{w}_{\lambda})=\mathcal E_{\alpha_1,\alpha_2,\beta}(m_{\lambda},{w}_{\lambda},m_{2,\textbf{a}},{w}_{2,\textbf{a}})\geq \mathcal E_{\alpha_1,\alpha_2,\beta}(m_{1,\textbf{a}},{w}_{1,\textbf{a}},m_{2,\textbf{a}},{w}_{2,\textbf{a}})=J_1(m_{1,\textbf{a}},{w}_{1,\textbf{a}}),\end{align*}
which implies
\begin{align*}
\varphi(m_{\lambda},{w}_\lambda)+\Lambda(m_{\lambda})\geq \varphi(m_{1,\textbf{a}},{w}_{1,\textbf{a}})+\Lambda(m_{1,\textbf{a}}),
\end{align*}
i.e.
\begin{align}\label{89innotes}
\varphi(m_{\lambda},{w}_{\lambda})-\varphi(m_{1,\textbf{a}},{w}_{1,\textbf{a}})\geq \Lambda(m_{1,\textbf{a}})-\Lambda(m_{\lambda}).
\end{align}
Next, we simplitfy (\ref{89innotes}).  On one hand, by the convexity of $\varphi$ in $(m,w)$, we have
\begin{align*}
\varphi(m_{\lambda},{w}_{\lambda})\leq \lambda\varphi(m,{w})+(1-\lambda)\varphi(m_{1,\textbf{a}},{w}_{1,\textbf{a}}),
\end{align*}
i.e.
\begin{align}\label{90innotes}
\varphi(m_{\lambda},{w}_{\lambda})-\varphi(m_{1,\textbf{a}},{w}_{1,\textbf{a}})\leq \lambda[\varphi(m,{w})-\varphi(m_{1,\textbf{a}},{w}_{1,\textbf{a}})].
\end{align}
On the other hand, for $\lambda>0$ sufficiently small, we have
\begin{align}\label{91innotes}
\Lambda(m_{\lambda})=\Lambda(m_{1,\textbf{a}})+\lambda\langle \nabla\Lambda (m_{1,\textbf{a}}),(m-m_{1,\textbf{a}})\rangle+O(\lambda).
\end{align}
In addition, invoking (\ref{definitionJ1tilde}) and (\ref{definitionofj1mandw}), one can obtain
\begin{align*}
\nabla\Lambda(m_{1,\textbf{a}})=V_1+f_1(m_{1,\textbf{a}},m_{2,\textbf{a}}).
\end{align*}
 Upon substituting (\ref{90innotes}) and (\ref{91innotes}) into (\ref{89innotes}), we get 
\begin{align*}
\varphi(m,{w})-\varphi(m_{1,\textbf{a}},{w}_{1,\textbf{a}})\geq -\langle \nabla\Lambda(m_{1,\textbf{a}}),m-m_{1,\textbf{a}}\rangle.
\end{align*}
Hence,
\begin{align*}
\tilde J_1(m,{w})=\varphi(m,{w})+\langle \nabla\Lambda (m_{1,\textbf{a}}),m\rangle\geq \varphi(m_{1,\textbf{a}},{w}_{1,\textbf{a}})+\langle \nabla\Lambda(m_{1,\textbf{a}}),m_{1,\textbf{a}}\rangle=\tilde J_1(m_{1,\textbf{a}},{w}_{1,\textbf{a}}),
\end{align*}
which indicates that claim (\ref{tildeJ1m1w1attained}) holds.

Now, we prove
\begin{align}\label{ourclaimhowtogetmulti}
\sup\{\lambda:-\Delta \psi+C_H|\nabla\psi|^{\gamma}+\lambda\leq V_1+f_1(m_{1,\textbf{a}},m_{2,\textbf{a}})\text{~in~}\mathbb R^N\text{~for some~}\psi\in\mathcal B_1\}=\min_{(m,{w})\in\mathcal B_1}\tilde J_1(m,{w}).
\end{align}
In fact, by following the similar argument shown in the proof of Proposition 3.4 in \cite{cesaroni2018concentration}, we define
\begin{align*}
\mathcal L_1(m,w,\lambda,\psi):=\tilde J_1(m,w)+\int_{\mathbb R^N}\big(m\Delta \psi+{w}\cdot\nabla \psi-\lambda m\big)\,dx+\lambda,
\end{align*}
and obtain
\begin{align*}
\min_{(m,{w})\in\mathcal B_1}\tilde J_1(m,{w})=\min_{(m,{w})\in\Gamma}\sup_{(\lambda,\psi)\in\mathbb R\times \mathcal A_1}\mathcal L_1(m,{w},\lambda,\psi),
\end{align*}
where $\Gamma:=(L^1(\mathbb R^N)\cap W^{1,\gamma'}(\mathbb R^N))\times L^{\gamma'}(\mathbb R^N)$.  Invoking the convexity of $\mathcal L_1(\cdot,\cdot,\lambda,\psi)$ and the linearity of $\mathcal L_1(m,{w},\cdot,\cdot)$, one has
\begin{align*}
&\min_{(m,{w})\in\Gamma}\sup_{(\lambda,\psi)\in\mathbb R\times \mathcal A_1}\mathcal L_1(m,{w},\lambda,\psi)=\sup_{(\lambda,\psi)\in\mathbb R\times \mathcal A_1}\min_{(m,{w})\in \Gamma}\mathcal L_1(m,{w},\lambda,\psi)\\
=&\sup_{(\lambda,\psi)\in\mathbb R\times \mathcal A_1}\int_{\mathbb R^N} \min_{(m,{w})\in\mathbb R\times \mathbb R^N}\bigg[C_L\bigg|\frac{w}{m}\bigg|^{\gamma'}m+[V_1+f_1(m_{1,\textbf{a}},m_{2,\textbf{a}})]m+m\Delta\psi+{w}\cdot\nabla\psi-\lambda m\bigg]\,dx+\lambda\\
=&\left\{\begin{array}{ll}
0,&V_1+f_1(m_{1,\textbf{a}},m_{2,\textbf{a}})-[-\Delta \psi+C_H|\nabla\psi|^{\gamma}]\geq 0,\\
-\infty,&V_1+f_1(m_{1,\textbf{a}},m_{2,\textbf{a}})-[-\Delta \psi+C_H|\nabla\psi|^{\gamma}]<0
\end{array}
\right.\\
=&\sup\{\lambda|V_1+f_1(m_{1,\textbf{a}},m_{2,\textbf{a}})-[-\Delta \psi+C_H|\nabla\psi|^{\gamma}]\geq 0\text{ for some }\psi\in\mathcal A_1\},
\end{align*}
which shows (\ref{ourclaimhowtogetmulti}).  Moreover, with the aid of Lemma \ref{lemma22preliminary}, we have 
\begin{align}\label{lambda1supmathcalAfinite}
\lambda_{1,\textbf{a}}:=&\sup\{\lambda|V_1+f_1(m_{1,\textbf{a}},m_{2,\textbf{a}})-[-\Delta \psi+C_H|\nabla\psi|^{\gamma}]\geq 0\text{ for some }\psi\in\mathcal A_1\}\nonumber\\
=&\min_{(m,{w})\in\mathcal B_1}\tilde J_1(m,{w})<+\infty,
\end{align}
and there exists $ u_{1,\textbf{a}}\in C^2(\mathbb R^N)$ such that 
\begin{align}\label{u1eqf1m1m2thm13}
-\Delta u_{1,\textbf{a}}+C_H|\nabla u_{1,\textbf{a}}|^{\gamma}+\lambda_{1,\textbf{a}}=V_1+f_1(m_{1,\textbf{a}},m_{2,\textbf{a}})\text{~in~}\mathbb R^N.
\end{align}
In particular, we have from Lemma \ref{sect2-lemma21-gradientu} and Lemma \ref{lowerboundVkgenerallemma22} that (\ref{gradientuiViloweboun})  holds for $u_{1,\textbf{a}}$.

Since $m_{1,\textbf{a}}$, $m_{2,\textbf{a}}\in L^\infty(\mathbb R^N)$ by Sobolev embedding, one obtains $f_1(m_{1,\textbf{a}},m_{2,\textbf{a}})\in L^\infty(\mathbb R^N).$  Then it follows from  (\ref{gradientuiViloweboun}) and (\ref{u1eqf1m1m2thm13}) that   
\begin{align*}
|-\Delta u_{1,\textbf{a}}(x)|\leq C(1+V_1(x)).
\end{align*}
Thus, $u_{1,\textbf{a}}\in\mathcal A_1$.  Combining (\ref{tildeJ1m1w1attained}) with (\ref{lambda1supmathcalAfinite}), one finds \eqref{lambda177innotes} holds for $i=1,$ i.e.
\begin{align*}
\lambda_{1,\textbf{a}}=\tilde J_1(m_{1,\textbf{a}},{w}_{1,\textbf{a}})=\int_{\mathbb R^N}\bigg[C_L\bigg|\frac{w_{1,\textbf{a}}}{m_{1,\textbf{a}}}\bigg|^{\gamma'}m_{1,\textbf{a}}\,+[V_1+f_1(m_{1,\textbf{a}},m_{2,\textbf{a}})]m_{1,\textbf{a}}\bigg]\,dx,
\end{align*}
where we have used (\ref{definitionJ1tilde}).   Next, we shall show
$${w}_{1,\textbf{a}}=-C_H\gamma m_{1,\textbf{a}}|\nabla u_{1,\textbf{a}}|^{\gamma-2}\nabla u_{1,\textbf{a}}.$$
First of all, (\ref{lambda177innotes}) and \eqref{u1eqf1m1m2thm13} imply that 
\begin{align*}
0=&\int_{\mathbb R^N}\bigg[C_L\bigg|\frac{{w}_{1,\textbf{a}}}{m_{1,\textbf{a}}}\bigg|^{\gamma'}+V_1+f_1(m_{1,\textbf{a}},m_{2,\textbf{a}})-\lambda_{1,\textbf{a}}\bigg]m_{1,\textbf{a}}\,dx\\
=&\int_{\mathbb R^N}\bigg[C_L\bigg|\frac{{w}_1}{m_{1,\textbf{a}}}\bigg|^{\gamma'}-\Delta u_{1,\textbf{a}}+C_H|\nabla u_{1,\textbf{a}}|^{\gamma'}\bigg]m_{1,\textbf{a}}\,dx.
\end{align*}
  Then we take $\psi=u_{1,\textbf{a}}$ in (\ref{80innotessecondeqibp})  to get
\begin{align}\label{thereforeconcludethm11multi}
0=&\int_{\{x|m_{1,\textbf{a}}>0\}}\bigg[C_L\bigg|\frac{{w}_{1,\textbf{a}}}{m_{1,\textbf{a}}}\bigg|^{\gamma'}+C_H|\nabla u_{1,\textbf{a}}|^{\gamma'}+\nabla u_{1,\textbf{a}}\cdot \frac{{w}_{1,\textbf{a}}}{m_{1,\textbf{a}}}\bigg]m_{1,\textbf{a}}\,dx.
\end{align}
By using the definition of $H$ that
\begin{align*}
L\bigg(-\frac{{w}_{1,\textbf{a}}}{m_{1,\textbf{a}}}\bigg)=C_L\bigg|\frac{{w}_{1,\textbf{a}}}{m_{1,\textbf{a}}}\bigg|^{\gamma'}=&\sup_{p\in\mathbb R^N}\bigg(-p\frac{{w}_{1,\textbf{a}}}{m_{1,\textbf{a}}}-H(p)\bigg)\geq  -C_H|\nabla u_{1,\textbf{a}}|^{\gamma}-\nabla u_{1,\textbf{a}}\cdot\frac{{w}_{1,\textbf{a}}}{m_{1,\textbf{a}}},
\end{align*}
where $H(p)=C_H|p|^{\gamma}.$  Therefore, (\ref{thereforeconcludethm11multi}) indicates that 
\begin{align}\label{indicatesthatthmmultilemmakey}
C_L\bigg|\frac{{w}_{1,\textbf{a}}}{m_{1,\textbf{a}}}\bigg|^{\gamma'}+C_H|\nabla u_{1,\textbf{a}}|^{\gamma}+\nabla u_{1,\textbf{a}}\cdot \frac{{w}_{1,\textbf{a}}}{m_{1,\textbf{a}}}\geq 0\text{~a.e.~in~}\{x\in \mathbb R^N|m_{1,\textbf{a}}>0\}.
\end{align}
Since $\sup\limits_{p\in\mathbb R^N}(-p\frac{{w}_{1,\textbf{a}}}{m_{1,\textbf{a}}}-H(p))$ is attained by $p=\nabla u_{1,\textbf{a}}$ when $m_{1,\textbf{a}}>0,$ one has from  \eqref{indicatesthatthmmultilemmakey} that 
\begin{align*}
\frac{{w}_{1,\textbf{a}}}{m_{1,\textbf{a}}}=-\nabla H(\nabla u_{1,\textbf{a}})\text{~in~}\{x\in\mathbb R^N|m_{1,\textbf{a}}>0\}.
\end{align*}
Thus, we obtain
\begin{align*}
-\Delta m_{1,\textbf{a}}-C_H\nabla\cdot (m_{1,\textbf{a}}|\nabla u_{1,\textbf{a}}|^{\gamma-2}\nabla u_{1,\textbf{a}})=0\text{~in a weak sense}.
\end{align*}
Proceeding the similar argument shown above, we have (\ref{lambda177innotes}) holds for $i=2$ and there exists $u_2\in C^2(\mathbb R^N)$ such that 
$${w}_{2,\textbf{a}}=-C_H\gamma m_{2,\textbf{a}}|\nabla u_{2,\textbf{a}}|^{\gamma-2}\nabla u_{2,\textbf{a}},~x\in\mathbb R^N\text{~in a weak sense}.$$
Finally, by the standard elliptic regularity, we find (\ref{ss11new}) holds in a classical sense.  This completes the proof of this lemma.
\end{proof}

By summarizing Lemma \ref{lemma31existenceleastenergy} and Lemma \ref{lemma32multimfg}, we are able to show conclusions stated in Theorem \ref{thm11multi}, which are

{\it{Proof of Theorem \ref{thm11multi}}}:
\begin{proof}
For Conclusion (i), we invoke Lemma \ref{lemma31existenceleastenergy} to get there exists a minimizer $(m_{1,\textbf{a}},w_{1,\textbf{a}},m_{2,\textbf{a}},m_{2,\textbf{a}})\in\mathcal K$ to (\ref{problem1p1}).  Moreover, Lemma \ref{lemma32multimfg} implies there exist $(u_{1,\textbf{a}},u_{2,\textbf{a}})\in C^2(\mathbb R^N)\times C^2(\mathbb R^N)$ and $(\lambda_{1,\textbf{a}},\lambda_{2,\textbf{a}})\in\mathbb {R}^2$ such that $(m_{1,\textbf{a}},m_{2,\textbf{a}},u_{1,\textbf{a}},u_{2,\textbf{a}},\lambda_{1,\textbf{a}},\lambda_{2,\textbf{a}})$ solves (\ref{ss1thm11}).  By standard regularity arguments, we have from Lemma \ref{lemma21-crucial-cor} that $$(m_{1,\textbf{a}},m_{2,\textbf{a}},u_{1,\textbf{a}},u_{2,\textbf{a}})\in W^{1,p}(\mathbb R^N)\times W^{1,p}(\mathbb R^N)\times C^2(\mathbb R^N)\times C^2(\mathbb R^N),$$
which completes the proof of this conclusion. 
 Conclusion (ii) is the straightforward corollary of Lemma \ref{lemma31existenceleastenergy}.
\end{proof}
We next focus on the borderline case when $\alpha_1=\alpha_2$ shown in Theorem \ref{thm11multi}.  In detail, we impose the extra assumption \eqref{moreassumptionforthm12} on the potentials and investigate the conclusions shown in Theorem \ref{thm12}, which are

{\it{Proof of Theorem \ref{thm12}:}}
\begin{proof}
In light of the assumption (\ref{moreassumptionforthm12}), we let $(m_t,w_t)$ be (\ref{byusingintro1}) with $x_0\in\mathbb R^N$ satisfying 
    \begin{align*}
    V_1(x_0)=V_2(x_0)=0.
    \end{align*}
    Then for $i=1,2,$ we compute to get
    \begin{align*}
    \int_{\mathbb R^N}V_i(x)m_t\,dx=&\frac{1}{M^*}\int_{\mathbb R^N}V_i(x)t^{N}m_0(t(x-x_0))\,dx
    =\frac{1}{M^*}\int_{\mathbb R^N}V_i\bigg(\frac{y}{t}+x_0\bigg)m_0(y)\,dy.
    \end{align*}
By invoking Lebesgue dominated theorem, we further obtain as $t\rightarrow +\infty,$ 
\begin{align*}
 \int_{\mathbb R^N}V_i(x)m_t\,dx\rightarrow V_i(x_0)=0,\text{~for~}i=1,2.
 \end{align*}
 Proceeding the similar argument shown in the proof of Lemma \ref{lemma31existenceleastenergy}, we get
\begin{align}\label{takeinftybefore}
\mathcal E_{a^*-\beta,a^*-\beta,\beta}(m_t,w_t,m_t,w_t)=[V_1(x_0)+V_2(x_0)]+o_t(1),
\end{align}
where $o_t(1)\rightarrow 0$ as $t\rightarrow+\infty.$  We take $t\rightarrow +\infty$ in (\ref{takeinftybefore}) to obtain
\begin{align}\label{criticaldeduce1}
e_{\alpha^*-\beta,\alpha^*-\beta,\beta}\leq 0.
\end{align}
On the other hand, we rewrite (\ref{energy1p3}) as 
\begin{align}\label{uponsubstitutingintobefore}
\mathcal E_{\alpha_1,\alpha_2,\beta}(m_1,w_1,m_2,w_2)=&\sum_{i=1}^2\bigg(\int_{\mathbb R^N}C_L\big|\frac{w_i}{m_i}\bigg|^{\gamma'}m_i+V_im_i-\frac{N(\alpha_i+\beta)}{N+\gamma'}\int_{\mathbb R^N}m_i^{1+\frac{\gamma'}{N}}\,dx\bigg)\nonumber\\
&+\frac{N\beta}{N+\gamma'}\int_{\mathbb R^N}\bigg(m_1^{\frac{1}{2}+\frac{\gamma'}{2N}}-m_2^{\frac{1}{2}+\frac{\gamma'}{2N}}\bigg)^2\,dx.
\end{align} 
Upon substituting $\alpha_1=\alpha_2=a^*-\beta$ and $\beta=a^*-\alpha$ into (\ref{uponsubstitutingintobefore}), we deduce that
\begin{align}\label{criticaldeduce2}
e_{a^*-\beta,a^*-\beta,\beta}\geq 0.
\end{align}
Combining (\ref{criticaldeduce1}) with (\ref{criticaldeduce2}), one has
\begin{align}\label{eastar0check}
e_{a^*-\beta,a^*-\beta,\beta}=0.
\end{align}

Now, we argue by contradiction and assume that  $(m_1,w_1,m_2,w_2)$ is a minimizer of (\ref{problem1p1}) with $\alpha_1=\alpha_2=a^*-\beta$ and $\beta=a^*-\alpha$.  Then we have 
\begin{align}\label{findsfromfirstone}
\mathcal E_{a^*-\beta,a^*-\beta,\beta}(m_1,w_1,m_2,w_2)=&\sum_{i=1}^2C_L\int_{\mathbb R^N}\bigg|\frac{w_i}{m_i}\bigg|^{\gamma'}m_i\,dx-\frac{Na^*}{N+\gamma'}\int_{\mathbb R^N}m_i^{1+\frac{\gamma'}{N}}\,dx\nonumber\\
&+\frac{N\beta}{N+\gamma'}\int_{\mathbb R^N}\bigg(m_1^{\frac{1}{2}+\frac{\gamma'}{2N}}-m_2^{\frac{1}{2}+\frac{\gamma'}{2N}}\bigg)^2\,dx\nonumber\\
&+\int_{\mathbb R^N}V_1(x)m_1+V_2(x)m_2\,dx\nonumber\\
:=&I_1+I_2+I_3.
\end{align}
In light of (\ref{eastar0check}), one finds from (\ref{findsfromfirstone}) that $I_1=I_2=I_3=0,$ in which $I_1=0$ implies each $(m_i,w_i)$, $i=1,2$ is a minimizer of problem (\ref{GNinequalitybest}).  In addition, $I_2=0$ indicates that $m_1=m_2$ in $\mathbb R^N.$  Morever, one gets from $I_3=0$ that 
\begin{align*}
\int_{\mathbb R^N}V_1(x)m_1+V_2(x)m_2\,dx=0,
\end{align*}
which leads to a contradiction since $m_i>0$ for $i=1,2$ by using the compactly Sobolev embedding and the maximum principle as shown in \cite{bernardini2023ergodic}.
\end{proof}

For the existence of minimizers, we next consider the case of $\alpha_1=a^*$ and show Theorem \ref{thmfinalexistence}, which is

{\it{Proof of Theorem \ref{thmfinalexistence}:}}

\begin{proof}
We define the test solution-pair as  
\begin{align}\label{scalingtestthm14}
m_{i,\tau}(x)=\frac{\tau^N}{M^*}m_0\bigg(\tau\bigg(x-\bar x_i+(-1)^{i}\iota\frac{\ln \tau}{\tau}\nu\bigg)\bigg),~w_{i,\tau}=\frac{\tau^{N+1}}{M^*}w_0\bigg(\tau\bigg(x-\bar x_i+(-1)^{i}\iota\frac{\ln \tau}{\tau}\nu\bigg)\bigg),
\end{align}
where $(m_0,w_0)$ denotes a minimizer of (\ref{GNinequalitybest}) satisfying \eqref{equmpotentialfree}, $\nu\in \mathbb S^{N-1}$, $\bar x_i\in\mathbb R^N$ and constant $\iota$ will be determined later. 

By using Lemma \ref{poholemma}, we have 
\begin{align*}
C_L\int_{\mathbb R^N}\bigg|\frac{w_{i,\tau}}{m_{i,\tau}}\bigg|^{\gamma'}m_{i,\tau}\,dx=\frac{\tau^{\gamma'}}{M^*},~~\int_{\mathbb R^N}m_{i,\tau}^{1+\frac{\gamma'}{N}}\,dx=\frac{N+\gamma'}{N}\frac{\tau^{\gamma'}}{(M^*)^{1+\frac{\gamma'}{N}}},
\end{align*}
and
\begin{align}\label{B6innotesmfgmulti}
\int_{\mathbb R^N}m_{1,\tau}^{\frac{1}{2}+\frac{\gamma'}{2N}}m_{2,\tau}^{\frac{1}{2}+\frac{\gamma'}{2N}}\,dx=\frac{\tau^{\gamma'}}{(M^*)^{1+\frac{\gamma'}{N}}}\int_{\mathbb R^N}m_0^{\frac{1}{2}+\frac{\gamma'}{2N}}(x)m_0^{\frac{1}{2}+\frac{\gamma'}{2N}}(x+\tau(\bar x_1-\bar x_2)+2\iota\ln \tau\nu)\,dx.
\end{align}
We have the fact that 
\begin{align*}
|\tau(\bar x_1-\bar x_2)+2\iota \ln\tau\nu|\geq 2\iota \ln \tau ~~\text{when }\tau\gg 1.
\end{align*}
Hence, for $\tau$ large, if $x\in B_{\iota\ln\tau}=\{x||x|<\iota \ln\tau\}$, one gets from Lemma \ref{mdecaylemma} that 
\begin{align}\label{byusingonegets1}
m_0(x+\tau(\bar x_1-\bar x_2)+2\iota\ln \tau\nu)\leq Ce^{-\delta_0 \iota\ln\tau},
\end{align}
where $C>0$ and $\delta_0>0$ are some constants.  And if $x\in B_{\iota\ln\tau}^c$, then 
\begin{align}\label{exponentialdecayusing}
m_0(x)\leq Ce^{-\delta_0\iota\ln \tau}.
\end{align}
Combining (\ref{byusingonegets1}) and (\ref{exponentialdecayusing}), one finds from (\ref{B6innotesmfgmulti}) that as $\tau\rightarrow+\infty$,
\begin{align}\label{B8innotesmultimfg}
    \int_{\mathbb R^N}m_{1,\tau}^{\frac{1}{2}+\frac{\gamma'}{2N}}m_{2,\tau}^{\frac{1}{2}+\frac{\gamma'}{2N}}\,dx=\frac{\tau^{\gamma'}}{(M^*)^{1+\frac{\gamma'}{N}}}\bigg[&\int_{B_{\iota\ln\tau}}m_0^{\frac{1}{2}+\frac{\gamma'}{2N}}(x)m_0^{\frac{1}{2}+\frac{\gamma'}{2N}}(x+\tau(\bar x_1-\bar x_2)+2\iota\ln \tau\nu)\,dx\nonumber\\
    &+\int_{B_{\iota\ln\tau}^c}m_0^{\frac{1}{2}+\frac{\gamma'}{2N}}(x)m_0^{\frac{1}{2}+\frac{\gamma'}{2N}}(x+\tau(\bar x_1-\bar x_2)+2\iota\ln \tau\nu)\,dx\bigg]\nonumber\\
    \leq &C_{N,\gamma'}\tau^{\gamma'}e^{-(\frac{1}{2}+\frac{\gamma'}{2N})\delta_0\iota\ln\tau}=C_{N,\gamma'}\tau^{\gamma'-\big(\frac{1}{2}+\frac{\gamma'}{2N}\big)\delta_0\iota}\rightarrow 0,
\end{align}
where constant $\iota$ is chosen as $\iota>\frac{2\gamma'N}{(N+\gamma')\delta_0}.$  In addition,
\begin{align*}
\int_{\mathbb R^N}V_i(x)m_{i,\tau}(x)\,dx=\frac{1}{M^*}\int_{\mathbb R^N}V_i\bigg(\frac{x}{\tau}+\bar x_{i}-(-1)^{i}\iota\frac{\ln \tau}{\tau}\nu\bigg)m_0\,dx:=\frac{1}{M^*}\int_{\mathbb R^N}g_{\tau}(x)\,dx.
\end{align*}
Noting that $g_{\tau}(x)\rightarrow V_i(\bar x_{i})m_0(x)$ a.e. in $\mathbb R^N$, we obtain from (\ref{Vicondition2}) and Lemma \ref{mdecaylemma} that
 when $\tau$ is large,
\begin{align*}
|g_{\tau}(x)|\leq Ce^{\delta  |\frac{x}{\tau}+\bar x_i-(-1)^i\iota\frac{\ln \tau}{\tau}\nu|}e^{-\delta_0|x|}\leq Ce^{-\frac{\delta_0}{2}|x|}\in L^1(\mathbb R^N).
\end{align*}
Thus, by Lebesgue dominated theorem, we further get
\begin{align}\label{b10innotesmultimfg}
\int_{\mathbb R^N}V_im_{i,\tau}\,dx\rightarrow V_i(\bar x_i)\text{~as~}\tau\rightarrow +\infty.
\end{align}
Collecting (\ref{B6innotesmfgmulti}), (\ref{B8innotesmultimfg}) and (\ref{b10innotesmultimfg}), one finds if $\alpha_1=\alpha_2=a^*$ and $\beta\leq 0$, then
\begin{align*}
\mathcal E_{\alpha_1,\alpha_2,\beta}(m_{1,\tau},w_{1,\tau},m_{2,\tau},w_{2,\tau})=V_1(\bar x_1)+V_2(\bar x_2)+o_{\tau}(1),
\end{align*}
where $o_{\tau}(1)\rightarrow 0$ as $\tau\rightarrow +\infty.$
Choose $\bar x_i$ such that $V_i(\bar x_i)=0$ for $i=1,2,$, it then follows that 
\begin{align*}
e_{\alpha_1,\alpha_2,\beta}\leq V_1(\bar x_1)+V_2(\bar x_2)=0.
\end{align*}
Moreover, by using (\ref{GNinequalityused}) and $\beta\leq 0$, one has  $e_{\alpha_1,\alpha_2,\beta}\geq 0.$  Therefore, we summarize to get $e_{\alpha_1,\alpha_2,\beta}=0.$  Proceeding the same argument as shown in the proof of Theorem \ref{thm12}, we show there is no minimizer in case (i). 

For case (ii), if $\beta=0,$ one finds
\begin{align*}
e_{\alpha_1,\alpha_2,0}=e^1_{a^*}+e^2_{\alpha_2},
\end{align*}
where $e^1_{a^*}$ and $e^2_{\alpha_2}$ are given by (\ref{problem51innotescopy}).  Noting that this is the decoupled case, we have the fact that there is no minimizer as shown in \cite{cirant2024critical}.

If $0<\beta\leq\frac{a^*-\alpha_2}{2}$, taking $\iota=0$ in (\ref{scalingtestthm14}), we compute to get 
\begin{align}\label{b12innotesmultimfg}
\int_{\mathbb R^N}m_{1,\tau}^{\frac{1}{2}+\frac{\gamma'}{2N}}m_0^{\frac{1}{2}+\frac{\gamma'}{2N}}\,dx=\frac{\tau^{\frac{1}{2}(\gamma'-N)}}{(M^*)^{\frac{1}{2}+\frac{\gamma'}{2N}}}\int_{\mathbb R^N}m_0\bigg(\frac{x}{\tau}+\bar x_1\bigg)m_0^{\frac{1}{2}+\frac{\gamma'}{2N}}\,dx:=\frac{\tau^{\frac{1}{2}(\gamma'-N)}}{(M^*)^{\frac{1}{2}+\frac{\gamma'}{2N}}}I_{\tau}.
\end{align}
We choose $\bar x_1\in\mathbb R^N$ such that $m_0(\bar x_1)>C_0>0$ then obtain
\begin{align*}
\lim_{\tau\rightarrow +\infty}I_{\tau}\geq C_0\int_{\mathbb R^N}m_0^{\frac{1}{2}+\frac{\gamma'}{2N}}\,dx\geq C_1>0\text{ as }\tau\rightarrow+\infty.
\end{align*}
   Thus, (\ref{b12innotesmultimfg}) implies
\begin{align*}
\int_{\mathbb R^N}m_{1,\tau}^{\frac{1}{2}+\frac{\gamma'}{2N}}m_0^{\frac{1}{2}+\frac{\gamma'}{2N}}\,dx\geq C_{1}\tau^{\frac{1}{2}(\gamma'-N)}\rightarrow+\infty,
\end{align*}
 It follows that 
\begin{align*}
\mathcal E_{a^*.\alpha_2,\beta}(m_{1,\tau},w_{1,\tau},m_0,w_0)\leq o_{\tau}(1)+C-C_{\gamma'}\beta\tau^{\frac{1}{2}(\gamma'-N)}\rightarrow -\infty\text{ for }\beta>0.
\end{align*}
Hence $e_{a^*,\alpha_2,\beta}=-\infty$ if $\beta>0$, which indicates (\ref{problem1p1}) has no minimizer.


\end{proof}
As shown in Theorem \ref{thm11multi} and Theorem \ref{thm12}, we have obtained when all coefficients $\alpha_1,$ $\alpha_2$ and $\beta$ are subcritical, (\ref{ss1}) admits classical ground states; whereas, if $\alpha_1=\alpha_2$ are subcritical and $\beta$ is critical, then (\ref{problem1p1}) has no minimizer.  A natural question is the behaviors of ground states as $(\alpha_1,\alpha_2)\nearrow (a^*-\beta,a^*-\beta)$.  In fact, we can show there are concentration phenomena as coefficients approach critical ones.     In the next section, we shall discuss the asymptotic profiles of ground states in the singular limits mentioned above.

\section{Asymptotic Profiles of Ground States with $\beta>0$}\label{sect4multipopulation}
This section is devoted to the blow-up behaviors of ground states to (\ref{ss1}) in some singular limits under the attractive interaction case. We proceed the proof of Theorem \ref{thm13attractive} as follows.

{\it{Proof of Theorem \ref{thm13attractive}:}}
\begin{proof}
First of all, we have from (\ref{uponsubstitutingintobefore}) that 
\begin{align}\label{thm13scalinglimitbestconstant}
\mathcal E_{\alpha_1,\alpha_2,\beta}(m_{1,\textbf{a}},w_{1,\textbf{a}},m_{2,\textbf{a}},w_{2,\textbf{a}})=&\sum_{i=1}^2\bigg(\int_{\mathbb R^N}C_L\big|\frac{w_{i,\textbf{a}}}{m_{i,\textbf{a}}}\bigg|^{\gamma'}m_{i,\textbf{a}}-\frac{N(\alpha_i+\beta)}{N+\gamma'}\int_{\mathbb R^N}m_{i,\textbf{a}}^{1+\frac{\gamma'}{N}}\,dx\bigg)\nonumber\\
&+\frac{N\beta}{N+\gamma'}\int_{\mathbb R^N}\bigg(m_{1,\textbf{a}}^{\frac{1}{2}+\frac{\gamma'}{2N}}-m_{2,\textbf{a}}^{\frac{1}{2}+\frac{\gamma'}{2N}}\bigg)^2\,dx\nonumber\\
&+\int_{\mathbb R^N}V_1(x)m_{1,\textbf{a}}+V_2(x)m_{2,\textbf{a}}\,dx\nonumber\\
:=&II_1+II_2+II_3.
\end{align}
In light of (\ref{Vicondition1}) and (\ref{GNinequalityused}), one finds $II_j\geq 0$, $j=1,2,3.$  Moreover, assumption (\ref{Vicondition1}) implies $II_3\geq 0.$  Proceeding the same argument shown in the proof of Lemma \ref{lemma31existenceleastenergy}, we use the test pair \eqref{byusingintro1}  and compute from (\ref{thm13scalinglimitbestconstant}) that
\begin{align}\label{thm13scalinglimitbestconstant1}
\lim_{\textbf{a}\nearrow \textbf{a}^*_{\beta}}e_{\alpha_1,\alpha_2,\beta}=e_{a^*-\beta,a^*-\beta,\beta}=0.
\end{align}
Combining (\ref{thm13scalinglimitbestconstant}) with (\ref{thm13scalinglimitbestconstant1}), we obtain (\ref{thm13conclusion1}) and (\ref{thm13conclusion2}).

We next prove \eqref{202401719conclu} and argue by contradiction.  Without loss of generality, we assume that 
\begin{align*}
\limsup_{\textbf{a}\nearrow \textbf{a}^*_{\beta}}C_L\int_{\mathbb R^N}\bigg|\frac{w_{1,\textbf{a}}}{m_{1,\textbf{a}}}\bigg|^{\gamma'}m_{1,\textbf{a}}\,dx<+\infty.
\end{align*}
  Then, it follows from (\ref{thm13conclusion1}), (\ref{thm13conclusion2}) and Lemma \ref{lemma21-crucial-cor} that $(m_{1,\textbf{a}},w_{1,\textbf{a}},m_{2,\textbf{a}},w_{2,\textbf{a}})$ is uniformly bounded in $(W^{1,\gamma'}(\mathbb R^N)\times L^{\gamma'}(\mathbb R^N))^2.$  Moreover, by compactly Sobolev embedding (C.f. Lemma 5.1 in \cite{cirant2024critical}), one finds $m_{i,\textbf{a}}\rightarrow m_{i,0}$ strongly in $L^1(\mathbb R^N)\cap L^{1+\frac{\gamma'}{N}}(\mathbb R^N)$ for $i=1,2.$  By using the convexity of $\int_{\mathbb R^N}\big|\frac{w}{m}\big|^{\gamma'}m\,dx$, we have
\begin{align*}
e_{a^*-\beta,a^*-\beta,\beta}=\lim_{\textbf{a}\nearrow \textbf{a}_{\beta}^*}e_{\alpha_1,\alpha_2,\beta}=&\lim_{\textbf{a}\nearrow \textbf{a}^*_{\beta}}\mathcal E_{\alpha_1,\alpha_2,\beta}(m_{1,\textbf{a}},w_{1,\textbf{a}},m_{2,\textbf{a}},w_{2,\textbf{a}})\\
\geq& \mathcal E_{a^*-\beta,a^*-\beta,\beta} (m_{1,0},w_{1,0},m_{2,0},w_{2,0})\geq e_{a^*-\beta,a^*-\beta,\beta},
\end{align*}
which implies $(m_{1,0},w_{1,0},m_{2,0},w_{2,0})$ is a minimizer of $e_{a^*-\beta,a^*-\beta,\beta}$ and it is a contradiction since we have showed that $e_{a^*-\beta,a^*-\beta,\beta}$ has no minimizer in Theorem \ref{thm12}.

Now, we find \eqref{202401719conclu} holds and further obtain from (\ref{thm13conclusion1}) that for $i=1,2$ 
\begin{align*}
\lim_{\textbf{a}\nearrow \textbf{a}_{\beta}^*}\int_{\mathbb R^N}m_{i,\textbf{a}}^{1+\frac{\gamma'}{N}}\,dx= +\infty\text{ and }~\lim_{\textbf{a}\nearrow \textbf{a}_{\beta}^*}\frac{C_L\int_{\mathbb R^N}\big|\frac{w_{i,\textbf{a}}}{m_{i,\textbf{a}}}\big|^{\gamma'}m_{i,\textbf{a}}\,dx}{\int_{\mathbb R^N}m_{i,\textbf{a}}^{1+\frac{\gamma'}{N}}\,dx}=\frac{N+\gamma'}{N}a^*.
\end{align*}
Noting that as $\textbf{a}
\nearrow \textbf{a}_{\beta}^*,$ 
\begin{align*}
\bigg[\bigg(\int_{\mathbb R^N}m_{1,\textbf{a}}^{1+\frac{\gamma'}{N}}\,dx\bigg)^{\frac{1}{2}}-\bigg(\int_{\mathbb R^N}m_{2,\textbf{a}}^{1+\frac{\gamma'}{N}}\,dx\bigg)^{\frac{1}{2}}\bigg]^2\leq \int_{\mathbb R^N}\bigg(m_{1,\textbf{a}}^{\frac{1}{2}+\frac{\gamma'}{2N}}-m_{2,\textbf{a}}^{\frac{1}{2}+\frac{\gamma'}{2N}}\bigg)^2\,dx\rightarrow 0,
\end{align*}
one gets (\ref{thm13conclusionpart23}) holds. 

Noting that $(m_{1,\textbf{a}},{{w}}_{1,\textbf{a}},m_{2,\textbf{a}},{{w}}_{2,\textbf{a}})$ satisfy (\ref{ss11new}), we have from the integration by parts that 
\begin{align}
&\int_{\mathbb R^N}\nabla u_{1,\textbf{a}}\cdot\nabla m_{1,\textbf{a}}\,dx+C_H\int_{\mathbb R^N}|\nabla u_{1,\textbf{a}}|^{\gamma}m_{1,\textbf{a}}\,dx+\lambda_1\nonumber\\
=&\int_{\mathbb R^N}V_1m_{1,\textbf{a}}\,dx-\alpha_1\int_{\mathbb R^N}m_{1,\textbf{a}}^{1+\frac{\gamma'}{N}}\,dx-\beta\int_{\mathbb R^N}m_{1,\textbf{a}}^{\frac{1}{2}+\frac{\gamma'}{2N}}m_{2,\textbf{a}}^{\frac{1}{2}+\frac{\gamma'}{2N}}\,dx,\label{ibp120240719}
\end{align}
and
\begin{align}\label{ibp2202407192}
\int_{\mathbb R^N}\nabla u_{1,\textbf{a}}\cdot\nabla m_{1,\textbf{a}}\,dx=-C_H\gamma\int_{\mathbb R^N}m_{1,\textbf{a}}|\nabla u_{1,\textbf{a}}|^{\gamma}\,dx.
\end{align}
Combining (\ref{ibp120240719}) with (\ref{ibp2202407192}), one finds
\begin{align}\label{lambda1formulalimit39}
\lambda_1=&C_L\int_{\mathbb R^N}\bigg|\frac{w_{1,\textbf{a}}}{m_{1,\textbf{a}}}\bigg|^{\gamma'}m_{1,\textbf{a}}\,dx+\int_{\mathbb R^N}V_1m_{1,\textbf{a}}\,dx-\alpha_1\int_{\mathbb R^N}m_{1,\textbf{a}}^{1+\frac{\gamma'}{N}}\,dx-\beta\int_{\mathbb R^N}m_{1,\textbf{a}}^{\frac{1}{2}+\frac{\gamma'}{2N}}m_{2,\textbf{a}}^{\frac{1}{2}+\frac{\gamma'}{2N}}\,dx\nonumber\\
=&\bigg(C_L\int_{\mathbb R^N}\bigg|\frac{w_{1,\textbf{a}}}{m_{1,\textbf{a}}}\bigg|^{\gamma'}m_{1,\textbf{a}}\,dx-\frac{N(\alpha_1+\beta)}{N+\gamma'}\int_{\mathbb R^N}m_{1,\textbf{a}}^{1+\frac{\gamma'}{N}}\,dx\bigg)-\frac{\gamma'(\alpha_1+\beta)}{\gamma'+N}\int_{\mathbb R^N}m_{1,\textbf{a}}^{1+\frac{\gamma'}{N}}\,dx\nonumber\\
&+\beta\int_{\mathbb R^N}m_{1,\textbf{a}}^{\frac{1}{2}+\frac{\gamma'}{2N}}\bigg(m_{1,\textbf{a}}^{\frac{1}{2}+\frac{\gamma'}{2N}}-m_{2,\textbf{a}}^{\frac{1}{2}+\frac{\gamma'}{2N}}\bigg)\,dx+\int_{\mathbb R^N}V_1m_{1,\textbf{a}}\,dx\nonumber\\
=&o_{\varepsilon}(1)-\frac{\gamma'(\alpha_1+\beta)}{\gamma'+N}\int_{\mathbb R^N}m_{1,\textbf{a}}^{1+\frac{\gamma'}{N}}\,dx+\beta\int_{\mathbb R^N}m_{1,\textbf{a}}^{\frac{1}{2}+\frac{\gamma'}{2N}}\bigg(m_{1,\textbf{a}}^{\frac{1}{2}+\frac{\gamma'}{2N}}-m_{2,\textbf{a}}^{\frac{1}{2}+\frac{\gamma'}{2N}}\bigg)\,dx,
\end{align}
where we have used \eqref{thm13conclusion1} and \eqref{thm13conclusion2} as $\textbf{a}\nearrow \textbf{a}_{\beta}^*.$  To further simplify (\ref{lambda1formulalimit39}), we use (\ref{thm13conclusion2}) to get
\begin{align}\label{lambda1formulalimit40}
\bigg|\int_{\mathbb R^N}m_{1,\textbf{a}}^{\frac{1}{2}+\frac{\gamma'}{2N}}\bigg(m_{1,\textbf{a}}^{\frac{1}{2}+\frac{\gamma'}{2N}}-m_{2,\textbf{a}}^{\frac{1}{2}+\frac{\gamma'}{2N}}\bigg)\,dx\bigg|\leq &\bigg(\int_{\mathbb R^N}m_{1,\textbf{a}}^{1+\frac{\gamma'}{N}}\,dx\bigg)^{\frac{1}{2}}\bigg[\int_{\mathbb R^N}\bigg(m_{1,\textbf{a}}^{\frac{1}{2}+\frac{\gamma'}{2N}}-m_{2,\textbf{a}}^{\frac{1}{2}+\frac{\gamma'}{2N}}\bigg)^2\,dx\bigg]^{\frac{1}{2}}\nonumber\\
=&o_{\varepsilon}(1)\bigg(\int_{\mathbb R^N}m_{1,\textbf{a}}^{1+\frac{\gamma'}{N}}\,dx\bigg)^{\frac{1}{2}}.
\end{align}
By utilizing (\ref{thm13conclusion1}) and \eqref{defvarepsilon316}, one finds 
\begin{align}\label{lambda1formulalimit41}
\lim_{\textbf{a}\nearrow \textbf{a}_{\beta}^*}{\frac{N\varepsilon_{\textbf{a}}^{\gamma'}(\alpha_1+\beta)}{N+\gamma'}\int_{\mathbb R^N}m_{1,\textbf{a}}^{1+\frac{\gamma'}{N}}\,dx}=1.
\end{align}
Collecting (\ref{lambda1formulalimit39}), (\ref{lambda1formulalimit40}) and (\ref{lambda1formulalimit41}), we have 
\begin{align}\label{lambda1inthm13asymptotic}
\lambda_1=-\frac{(\alpha_1+\beta)\gamma'}{N+\gamma'}\int_{\mathbb R^N}m_{1,\textbf{a}}^{1+\frac{\gamma'}{N}}\,dx+o_{\varepsilon}(1)\bigg(\int_{\mathbb R^N}m_{1,\textbf{a}}^{1+\frac{\gamma'}{N}}\,dx\bigg)^{\frac{1}{2}}+o_{\varepsilon}(1)= \frac{\gamma'}{N}\varepsilon^{-\gamma'}+o_{\varepsilon}(1)(1+\varepsilon^{-\frac{\gamma'}{2}}),
\end{align}
where $\varepsilon\rightarrow 0$ given by (\ref{defvarepsilon316}). This implies that $\lim_{\varepsilon\rightarrow 0}\lambda_{1}\varepsilon^{\gamma'}=-\frac{\gamma'}{N}$.  Proceeding the similar argument shown above, one obtains from (\ref{thm13conclusionpart23}) that 
$
\lim_{\varepsilon\rightarrow 0}\lambda_{2}\varepsilon^{\gamma'}=-\frac{\gamma'}{N}
$.
Now, we substitute (\ref{scalingthm31profile}) into  (\ref{ss11new}) and obtain
\begin{align}\label{ss11newnew}
\left\{\begin{array}{ll}
-\Delta u_{1,\varepsilon}+C_H|\nabla u_{1,\varepsilon}|^{\gamma}+\lambda_1\varepsilon^{\gamma'}=\varepsilon^{\gamma'}V_1(\varepsilon x+x_{1,\varepsilon})-\alpha_1 m_{1,\varepsilon}^{\frac{\gamma'}{N}}-\beta m_{1,\varepsilon}^{\frac{\gamma'}{2N}-\frac{1}{2}}m_{2,\varepsilon}^{\frac{1}{2}+\frac{\gamma'}{N}},&x\in\mathbb R^N,\\
\Delta m_{1,\varepsilon}+\nabla\cdot {{w}}_{1,\varepsilon}=0,~{{w}}_{1,\varepsilon}=-\gamma C_Hm_{1,\varepsilon}|\nabla u_{1,\varepsilon}|^{\gamma-2}\nabla u_{1,\varepsilon}&x\in\mathbb R^N,\\
-\Delta u_{2,\varepsilon}+C_H|\nabla u_{2,\varepsilon}|^{\gamma}+\lambda_2\varepsilon^{\gamma'}=\varepsilon^{\gamma'}V_2(\varepsilon x+x_{1,\varepsilon})-\alpha_2 m_{2,\varepsilon}^{\frac{\gamma'}{N}}-\beta m_{2,\varepsilon}^{\frac{\gamma'}{2N}-\frac{1}{2}}m_{1,\varepsilon}^{\frac{1}{2}+\frac{\gamma'}{N}},&x\in\mathbb R^N,\\
\Delta m_{2,\varepsilon}+\nabla\cdot {{w}}_{2,\varepsilon}=0,~{{w}}_{2,\varepsilon}=-\gamma C_Hm_{2,\varepsilon}|\nabla u_{2,\varepsilon}|^{\gamma-2}\nabla u_{2,\varepsilon},&x\in\mathbb R^N,\\
\int_{\mathbb R^N}m_{1,\varepsilon}\,dx=\int_{\mathbb R^N}m_{2,\varepsilon}\,dx=1.
\end{array}
\right.
\end{align}
Without loss of the generality, we assume
\begin{align*}
\inf_{x\in\mathbb R^N}u_{1,\textbf{a}}(x)=\inf_{x\in\mathbb R^N}u_{2,\textbf{a}}(x)=0.
\end{align*}
In light of (\ref{thm13conclusionpart23}),  (\ref{defvarepsilon316}) and \eqref{scalingthm31profile}, one finds
\begin{align*}
\sup_{\varepsilon\rightarrow 0^+}C_L\int_{\mathbb R^N}\bigg|\frac{w_{i,\varepsilon}}{m_{i,\varepsilon}}\bigg|^{\gamma'}m_{i,\varepsilon}\,dx<+\infty,~i=1,2.
\end{align*}
Then it follows from Lemma \ref{lemma21-crucial-cor} that for $i=1,2$
\begin{align}\label{fromregularitythm31}
\sup_{\varepsilon\rightarrow 0^+}\Vert m_{i,\varepsilon}\Vert_{W^{1,\gamma'}(\mathbb R^N)}<+\infty,~~\sup_{\varepsilon\rightarrow 0^+}\Vert w_{i,\varepsilon}\Vert_{L^1(\mathbb R^N)}<+\infty,~~\sup_{\varepsilon\rightarrow 0^+}\Vert w_{i,\varepsilon}\Vert_{L^{\gamma'}(\mathbb R^N)}<+\infty.
\end{align}
Invoking (\ref{thm13conclusion2}) and (\ref{defvarepsilon316}) , one finds for $i=1,2$, 
\begin{align}\label{convergencem1m2potentialvarepsilonthm13}
\lim_{\varepsilon\rightarrow 0^+}\int_{\mathbb R^N}V_i(\varepsilon x+x_{1,\varepsilon})m_{i,\varepsilon}(x)\,dx=0,
\end{align}
and 
\begin{align}\label{convergencem1m2varepsilonthm13}
\lim_{\varepsilon\rightarrow 0^+}\int_{\mathbb R^N}\bigg(m_{1,\varepsilon}^{\frac{1}{2}+\frac{\gamma'}{2N}}-m_{2,\varepsilon}^{\frac{1}{2}+\frac{\gamma'}{2N}}\bigg)^2\,dx= 0.
\end{align}
By using the standard Sobolev embedding, we have from (\ref{fromregularitythm31}) and (\ref{convergencem1m2varepsilonthm13}) that  
\begin{align}\label{convergencem1m2varepsilonthm131}
m_{i,\varepsilon}\rightharpoonup m~\text{in~}W^{1,\gamma'}(\mathbb R^N),~~m_{i,\varepsilon}\rightarrow m\geq 0,\text{~a.e.~in~}\mathbb R^N.
\end{align}
Moreover, by using the Morrey's embedding $W^{1,\gamma'}(\mathbb R^N)\hookrightarrow C^{0,\theta}(\mathbb R^N)$ with $\theta\in (0,1-\frac{\gamma'}{N})$, one finds 
\begin{align}\label{convergencethm13holdercontinuousnew}
m_{i,\varepsilon}\rightarrow m\text{~in~}C^{0,\theta}(\mathbb R^N),~~\sup_{\varepsilon\rightarrow 0^+}\Vert m_{i,\varepsilon}\Vert_{C^{0,\theta}(\mathbb R^N)}<+\infty,~~i=1,2.
\end{align}
Recall that $u_{1,\textbf{a}}(x_{1,\varepsilon})=\inf_{x\in\mathbb R^N}u_{1,\textbf{a}}(x)=0,$ then we have $u_{1,\varepsilon}(0)=\inf_{x\in\mathbb R^N}u_{1,\varepsilon}$. Moreover, by applying the maximum principle, one gets from the first equation of (\ref{ss11newnew})  and \eqref{convergencethm13holdercontinuousnew} that
\begin{align*}
\lambda_{1}\varepsilon^{\gamma'}\geq &-\alpha_1m_{1,\varepsilon}^{\frac{\gamma'}{N}}(0)-\beta m_{2,\varepsilon}^{\frac{1}{2}+\frac{\gamma'}{2N}}(0) m^{\frac{\gamma'}{2N}-\frac{1}{2}}_{1,\varepsilon}(0)\\
=&-(\alpha_1+\beta)m_{1,\varepsilon}^{\frac{\gamma'}{N}}(0)+o_{\varepsilon}(1),
\end{align*}
 In addition, noting (\ref{lambda1inthm13asymptotic}) and $\alpha_1+\beta\nearrow a^*$, we have
\begin{align*}
\lim_{\varepsilon\rightarrow 0}m_{1,\varepsilon}^{\frac{\gamma'}{N}}(0)\geq \frac{\gamma'}{Na^*}.
\end{align*}
Recall again $W^{1,\gamma'}(\mathbb R^N)\hookrightarrow C^{0,\theta}(\mathbb R^N),$ we have from (\ref{convergencem1m2varepsilonthm13}) that for $i=1,2,$ there exists $R_0>0$ and $C>0$ such that 
\begin{align}\label{mivarepsilonlowboundholdercontinuity}
m_{i,\varepsilon}(x)\geq C>0,\text{~~}\forall |x|<R_0,\text{ for }i=1,2.
\end{align} 
Moreover, we utilize (\ref{convergencem1m2potentialvarepsilonthm13}) and (\ref{mivarepsilonlowboundholdercontinuity}) to get up to a subsequence,
\begin{align*}
{\lim_{\varepsilon\rightarrow 0}x_{1,\varepsilon}=x_0,\text{~s.t.~}V_1(x_0)=0=V_2(x_0).}
\end{align*}
Combining (\ref{convergencethm13holdercontinuousnew}) with (\ref{mivarepsilonlowboundholdercontinuity}), one also has 
\begin{align}\label{mlowerboundpositivelocallimit}
m(x)\geq C>0,\text{~}\forall |x|<R_0.
\end{align}
Next, we study the regularity of the value function $u$.  To this end, we rewrite the $u_1$-equation in (\ref{ss11newnew}) as 
\begin{align}\label{rewriteueqthm13c2regularity}
-\Delta u_{1,\varepsilon}+C_H|\nabla u_{1,\varepsilon}|^{\gamma}=&-\lambda_1\varepsilon^{\gamma'}+\varepsilon^{\gamma'}V_1(\varepsilon x+x_{1,\varepsilon})-\alpha_1 m_{1,\varepsilon}^{\frac{\gamma'}{N}}-\beta m_{1,\varepsilon}^{\frac{\gamma'}{2N}-\frac{1}{2}}m_{2,\varepsilon}^{\frac{1}{2}+\frac{\gamma'}{2N}}\nonumber\\
:=&g_{\varepsilon}(x)\in L^{\infty}_{\text{loc}}(\mathbb R^N)\cap C_{\text{loc}}^{0,\theta}(\mathbb R^N).
\end{align}
For $ R>0$ large enough, we have 
\begin{align*}
\Vert g_{\varepsilon}(x)\Vert_{L^\infty(B_R(0))}<C_R<+\infty,\text{~}\forall |x|<2R,
\end{align*}
where $C_R>0$ is independent of $\varepsilon.$   Then it follows from \eqref{rewriteueqthm13c2regularity} and Sobolev embedding that 
\begin{align*}
|\nabla u_{1,\varepsilon}(x)|\leq C_R,~\forall |x|<2R.
\end{align*}
Since $u_{1,\varepsilon}(0)=0$, we further have
\begin{align*}
|u_{1,\varepsilon}(x)|\leq C_R,\text{~}\forall |x|<2R.
\end{align*}
By using the $W^{2,p}-$ estimate, one gets
\begin{align*}
\Vert u_{1,\varepsilon}\Vert_{W^{2,p}(B_{R+1}(0))}\leq C_{p,R}\big(\Vert u_{1,\varepsilon}\Vert_{L^p(B_{2R}(0))}+\Vert g_{\varepsilon}\Vert_{L^p(B_{2R}(0))}+\Vert|\nabla u_{1,\varepsilon}|^{\gamma}\Vert_{L^p(B_{2R}(0))}\big),~\forall p>1,
\end{align*}
where $C_{p,R}>0$ is a constant depending on $p$ and $R.$  Let $p>N,$ then we obtain
$$\Vert u_{1,\varepsilon}\Vert_{C^{1,\theta_1}(B_{R+1}(0))}\leq C_{\theta_1,R}<+\infty,$$
for some $\theta_1\in(0,1).$  Moreover, we rewrite (\ref{rewriteueqthm13c2regularity}) as
\begin{align*}
-\Delta u_{1,\varepsilon}=-C_H|\nabla u_{1,\varepsilon}|^{\gamma}+g_{\varepsilon}\in C^{1,\theta_2}(B_{R+1}(0)).
\end{align*}
One further deduces from the standard $W^{2,p}$ estimate that  
\begin{align*}
\Vert u_{1,\varepsilon}\Vert_{C^{2,\theta_3}(B_R(0))}\leq C_{\theta_3,R}<+\infty.
\end{align*}
  Then by the standard diagonal procedure and Arzel\`{a}-Ascoli theorem, we have from (\ref{ss11newnew}), (\ref{fromregularitythm31}), (\ref{convergencem1m2varepsilonthm131}) and (\ref{convergencethm13holdercontinuousnew}) that there exist $ u_1\in C^2(\mathbb R^N)$ and ${ {w}}_1\in L^{\gamma'}(\mathbb R^N)$ such that 
\begin{align}\label{eq4.180}
u_{1,\varepsilon}\rightarrow u_1\text{~in~}C_{\text{loc}}^2(\mathbb R^N),~~{{w}}_{1,\varepsilon}\rightharpoonup {{w}}_1\text{~in~}L^{\gamma'}(\mathbb R^N),
\end{align}
and $(u_1,m,{{w}}_1)$ satisfies
\begin{align*}
\left\{\begin{array}{ll}
-\Delta u_1+C_H|\nabla u_1|^{\gamma}-\frac{\gamma'}{N}=-a^*m^{\frac{\gamma'}{N}},&x\in\mathbb R^N,\\
-\Delta m=\gamma C_H\nabla\cdot(m|\nabla u_1|^{\gamma-2}\nabla u_1)=-\nabla\cdot {w}_1,&x\in\mathbb R^N,\\
0<\int_{\mathbb R^N}m\,dx\leq 1,
\end{array}
\right.
\end{align*}
where we have used (\ref{lambda1inthm13asymptotic}) and (\ref{mlowerboundpositivelocallimit}).  In addition, by Lemma \ref{poholemma} and (\ref{GNinequalityused}), one finds
\begin{align}\label{4182024106}
\int_{\mathbb R^N}m\,dx=1.
\end{align}
Thus, with the aid of (\ref{convergencem1m2varepsilonthm131}), we obtain for $i=1,2$, $m_{i,\varepsilon}\rightarrow m \text{~in~} L^1(\mathbb R^N).$  Moreover, \eqref{convergencethm13holdercontinuousnew} indicates
\begin{align}\label{wefurtherhavelpconvergencethm13}
m_{i,\varepsilon}\rightarrow m\text{~in~} L^p(\mathbb R^N),~\forall p\geq 1.
\end{align}
This combine with \eqref{eq4.180}
show that \eqref{mlimiting20923} holds  for $i=1$.

Next, to prove  \eqref{x1varepsilonminusx2varepsilon},
we first recall that $u_{2,\textbf{a}}(x_{2,\varepsilon})=0=\inf_{x\in \mathbb R^N}u_{2,\textbf{a}}(x)$.  Then, we have from (\ref{ss11newnew}) and \eqref{scalingthm31profile} that 
\begin{align*}
\lambda_2\geq& V_2(x_{2,\varepsilon})-\alpha_2 m_{2,\textbf{a}}^{\frac{\gamma'}{N}}(x_{2,\varepsilon})-\beta m_{2,\textbf{a}}^{\frac{\gamma'}{2N}-\frac{1}{2}}(x_{2,\varepsilon})m_{1,\textbf{a}}^{\frac{1}{2}+\frac{\gamma'}{N}}(x_{2,\varepsilon})\\
\geq &\varepsilon^{-\gamma'}\bigg[-\alpha_2m_{2,\varepsilon}^{\frac{\gamma'}{N}}\bigg(\frac{x_{2,\varepsilon}-x_{1,\varepsilon}}{\varepsilon}\bigg)-\beta m_{2,\varepsilon}^{\frac{\gamma'}{2N}-\frac{1}{2}}\bigg(\frac{x_{2,\varepsilon}-x_{1,\varepsilon}}{\varepsilon}\bigg)m_{1,\varepsilon}^{\frac{1}{2}+\frac{\gamma'}{N}}\bigg(\frac{x_{2,\varepsilon}-x_{1,\varepsilon}}{\varepsilon}\bigg)\bigg],
\end{align*}
which implies \begin{align}\label{lambda2thm13lowerboundvarepsilonconvergence}
\alpha_2 m_{2,\varepsilon}^{\frac{\gamma'}{N}}\bigg(\frac{x_{2,\varepsilon}-x_{1,\varepsilon}}{\varepsilon}\bigg)+\beta m_{2,\varepsilon}^{\frac{\gamma'}{2N}-\frac{1}{2}}\bigg(\frac{x_{2,\varepsilon}-x_{1,\varepsilon}}{\varepsilon}\bigg)m_{1,\varepsilon}^{\frac{1}{2}+\frac{\gamma'}{N}}\bigg(\frac{x_{2,\varepsilon}-x_{1,\varepsilon}}{\varepsilon}\bigg)\geq \varepsilon^{\gamma'}\lambda_2\geq \frac{\gamma'}{2N}\text{~as~}\varepsilon\rightarrow 0.
\end{align}
Combining (\ref{convergencethm13holdercontinuousnew}) with (\ref{wefurtherhavelpconvergencethm13}), one can easily check that for $i=1,2$
\begin{align*}
\lim_{|x|\rightarrow +\infty}m_{i,\varepsilon}(x)=0\text{~uniformly~in~}\varepsilon.
\end{align*}
Combining this with
\eqref{lambda2thm13lowerboundvarepsilonconvergence}, one has   (\ref{x1varepsilonminusx2varepsilon}) holds.  

We next similarly show that there exist $ u_2\in C^2(\mathbb R^N)$ and ${w}_2\in L^{\gamma'}(\mathbb R^N)$ such that 
\begin{align*}
u_{2,\varepsilon}\rightarrow u_2\text{~in~}C_{\text{loc}}^2(\mathbb R^N),\text{~and~}{\textbf{w}}_{2,\varepsilon}\rightharpoonup {\textbf{w}}_2 \text{~in~} L^{\gamma'}(\mathbb R^N),
\end{align*}
and $(u_2,m,{w}_2)$ satisfies (\ref{satisfythm13eqsinglezeropotential}), in which $(m,{w}_2)$ is a minimizer of (\ref{GNinequalitybest}).  Indeed, we rewrite the $u_2$-equation in \eqref{ss11newnew} as
\begin{align}\label{argueforbddofu2}
-\Delta u_{2,\varepsilon}+C_H|\nabla u_{2,\varepsilon}|^{\gamma}&=-\lambda_2\varepsilon^{\gamma'}+\varepsilon^{\gamma'}V_2(\varepsilon x+x_{1,\varepsilon})-\alpha_2 m_{2,\varepsilon}^{\frac{\gamma'}{N}}-\beta m_{2,\varepsilon}^{\frac{\gamma'}{2N}-\frac{1}{2}}m_{1,\varepsilon}^{\frac{1}{2}+\frac{\gamma'}{2N}}\nonumber\\
&:=h_{\varepsilon}\in L^{\infty}_{\text{loc}}(\mathbb R^N)\cap C_{\text{loc}}^{0,\theta_4}(\mathbb R^N).
\end{align}
Moreover, by Lemma \ref{sect2-lemma21-gradientu}, one has for any $R>0$ large enough, 
\begin{align}\label{gradientestimateboundedthm13u2component}
|\nabla u_{2,\varepsilon}(x)|\leq C_R<+\infty,~\forall |x|<2R.
\end{align}
In light of  
$
u_{2,\varepsilon}\bigg(\frac{x_{2,\varepsilon}-x_{1,\varepsilon}}{\varepsilon}\bigg)=0=\inf_{x\in\mathbb R^N}u_{2,\varepsilon}(x),
$ we use (\ref{x1varepsilonminusx2varepsilon}) and (\ref{gradientestimateboundedthm13u2component}) to get
\begin{align*}
|u_{2,\varepsilon}(0)|\leq C_{R}\bigg|\frac{x_{2,\varepsilon}-x_{1,\varepsilon}}{\varepsilon}\bigg|+\bigg|u_{2,\varepsilon}\bigg(\frac{x_{2,\varepsilon}-x_{1,\varepsilon}}{\varepsilon}\bigg)\bigg|\leq \tilde C_{R}<+\infty.
\end{align*}
  Thus, thanks to  (\ref{gradientestimateboundedthm13u2component}), we find
\begin{align}\label{69innoteseq}
|u_{2,\varepsilon}(x)|\leq C_R,\text{~~}\forall |x|<2R.
\end{align}
Upon collecting (\ref{argueforbddofu2}), (\ref{gradientestimateboundedthm13u2component}) and (\ref{69innoteseq}), one obtains
\begin{align*}
\Vert u_{2,\varepsilon}\Vert_{C^{2,\theta_5}(B_R(0))}\leq C_{\theta_5,R}<+\infty.
\end{align*}
Moreover, we similarly get $(u_2,m,{w}_2)$ satisfies (\ref{satisfythm13eqsinglezeropotential}), in which $(m,{w}_2)$ is a minimizer of (\ref{GNinequalitybest}). To finish the proof of \eqref{mlimiting20923}, it remains to show that $u_1=u_2$ and $w_1=w_2$, which can be obtained by
 following the argument shown in the proof of Theorem 2.4 in \cite{Lasry}, 
 Indeed, since $(m,u_1,\lambda)$ and $(m,u_2,\lambda)$ solve (\ref{satisfythm13eqsinglezeropotential}) with $w_i:=\gamma m|\nabla u_i|^{\gamma-2}\nabla u_i$, we test the $u_1-u_2$ equation and $m_1-m_2$ equation against $m_1-m_2$ and $u_1-u_2$ and integrate them by parts, then subtract them to get a useful identity.  With the aid of the strict convexity of $|p|^{\gamma}$, $\gamma>1$, one has the conclusion $\nabla u_1=\nabla u_2$ and  then $w_1=w_2.$  By fixing the same minimum points of $u_1$ and $u_2$, we obtain $u_1=u_2.$ 
 
 Finally,  proceeding the similar argument as shown in the proof of Case (ii), Theorem 1.4 in \cite{cirant2024critical}, we have (\ref{moreoverholdsfinal}) holds. 
\end{proof}

Theorem \ref{thm13attractive} demonstrates that under mild assumptions \eqref{Vicondition1} and \eqref{Vicondition2}, ground states are localized as $\bf{a}\nearrow \bf{a}_{\beta}^*$. We next present the proof of Theorem \ref{thm16refinedblowup}, which is for the refined asymptotic profiles of ground states.  First of all, we establish the  following upper bound of $e_{\alpha_1,\alpha_2,\beta}$ given by (\ref{problem1p1}):
\begin{lemma}\label{lemma4120240724}
Under the assumptions of Theorem \ref{thm16refinedblowup},  we have  as $(\alpha_1,\alpha_2)\nearrow(a^*-\beta,a^*-\beta)$,
\begin{align}\label{2point1negativebetanotes}
0\leq e_{\alpha_1,\alpha_2,\beta}\leq \bigg(\frac{\gamma'+p_0}{p_0}\bigg)\bigg(\frac{\mu\bar{\nu}_{p_0} p_0}{\gamma'}\bigg)^{\frac{\gamma'}{\gamma'+p_0}}\bigg(\frac{2}{a^*}\bigg)^{\frac{p_0}{\gamma'+p_0}}\bigg(a^*-\frac{\alpha_1+\alpha_2+2\beta}{2}\bigg)^{\frac{p_0}{\gamma'+p_0}}(1+o(1)).
\end{align}
\end{lemma}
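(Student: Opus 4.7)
The lower bound $e_{\alpha_1,\alpha_2,\beta}\ge 0$ is immediate from the rewrite \eqref{uponsubstitutingintobefore}: since $\alpha_i+\beta<a^*$, the Gagliardo--Nirenberg inequality \eqref{GNinequalityused} makes every one-species bracket $C_L\int|w_i/m_i|^{\gamma'}m_i+\int V_i m_i-\tfrac{N(\alpha_i+\beta)}{N+\gamma'}\int m_i^{1+\gamma'/N}$ non-negative, while the squared-difference cross term (multiplied by $+\beta>0$) also contributes non-negatively. Hence $\mathcal E_{\alpha_1,\alpha_2,\beta}\ge 0$ on all of $\mathcal K$.

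For the upper bound the plan is to construct a sharp test pair concentrated at a common zero of $V_1+V_2$. Fix $x_j\in Z_0$, pick $(\bar m,\bar w)\in\mathcal M$ and a shift $z\in\mathbb R^N$ with $H_{\bar m,p_0}(z)\le\bar\nu_{p_0}+\eta$ (for arbitrary $\eta>0$), and use the \emph{symmetric} test quadruple
$$m_{i,\tau}(x)=\tfrac{\tau^N}{M^*}\bar m\bigl(\tau(x-x_j)-z\bigr),\qquad w_{i,\tau}(x)=\tfrac{\tau^{N+1}}{M^*}\bar w\bigl(\tau(x-x_j)-z\bigr),\qquad i=1,2,$$
so $m_{1,\tau}=m_{2,\tau}$ and the squared-difference term in \eqref{uponsubstitutingintobefore} vanishes automatically. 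The Pohozaev identities \eqref{combine20247131} applied as in the derivations \eqref{onefindsintro1}--\eqref{onefindsintro2} collapse the entire kinetic-plus-local block into the single expression $\tfrac{2\tau^{\gamma'}}{M^*a^*}\bigl(a^*-\tfrac{\alpha_1+\alpha_2+2\beta}{2}\bigr)$, so only the potential contribution remains to be expanded.

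After the change of variables $y=\tau(x-x_j)-z$ and the Taylor expansion \eqref{Vi125negativebeta2024}, the case analysis \eqref{mujthm1point5} shows that only the smaller exponent $p_j=p_0$ survives at leading order, with combined coefficient $\mu_j=\mu$ thanks to $x_j\in Z_0$, yielding
$$\int(V_1+V_2)m_{i,\tau}\,dx=\tfrac{\mu}{M^*\tau^{p_0}}H_{\bar m,p_0}(z)+o(\tau^{-p_0})\le\tfrac{\mu(\bar\nu_{p_0}+\eta)}{M^*\tau^{p_0}}+o(\tau^{-p_0}).$$
Assembling gives $\mathcal E_{\alpha_1,\alpha_2,\beta}(m_{1,\tau},w_{1,\tau},m_{2,\tau},w_{2,\tau})\le A\tau^{\gamma'}+B_\eta\tau^{-p_0}+o(\tau^{-p_0})$ with $A$ proportional to $a^*-\tfrac{\alpha_1+\alpha_2+2\beta}{2}$ and $B_\eta$ proportional to $\mu(\bar\nu_{p_0}+\eta)$; elementary one-variable minimisation at $\tau^{\gamma'+p_0}=B_\eta p_0/(A\gamma')$, followed by $\eta\to 0^+$, reproduces exactly the explicit right-hand side of \eqref{2point1negativebetanotes} up to the admissible $(1+o(1))$ factor.

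\textbf{Main obstacle.} The delicate step is uniformising the potential error as $\tau\to\infty$. The Taylor remainder $O(\tau^{-p_0-1})$ is controlled by the finiteness of the moment $\int|y|^{p_0+1}\bar m\,dy$, which is guaranteed by the exponential decay of $\bar m$ from Lemma \ref{mdecaylemma}. The contribution from the far region $|y|\gtrsim R_0\tau$, outside the validity of \eqref{Vi125negativebeta2024}, must be handled by pitting the at-most-exponential growth of $V_i$ from \eqref{Vicondition2} against the same exponential decay of $\bar m$; the resulting error is $o(\tau^{-p_0})$ uniformly in $(\bar m,\bar w)$ and $z$, and in particular remains negligible at the critical scale $\tau\sim(a^*-\tfrac{\alpha_1+\alpha_2+2\beta}{2})^{-1/(\gamma'+p_0)}\to\infty$ produced by the optimisation.
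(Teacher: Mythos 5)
Your proposal is correct and follows the same route as the paper's proof: the lower bound via \eqref{uponsubstitutingintobefore} and \eqref{GNinequalityused}, a symmetric scaled test quadruple centred at $x_j\in Z_0$ with a near-optimal shift, the Pohozaev computations \eqref{onefindsintro1}--\eqref{onefindsintro2} to evaluate the kinetic and local blocks, the Taylor expansion of $V_1+V_2$ controlled by the exponential decay of $\bar m$, and a one-variable optimisation in $\tau$. The only caveat is a normalisation subtlety: in \eqref{Hmoibarnupi} the set $\mathcal M$ has $\int\bar m = M^*$, yet \eqref{428negativebeta1} in the paper's proof asserts $\int m_0=1$, so when you divide by $M^*$ in the test pair you should also carry that factor through $H_{\bar m,p_0}$ to ensure the explicit constant in \eqref{2point1negativebetanotes} is reproduced exactly rather than up to a power of $M^*$ -- since you leave $A$ and $B_\eta$ as "proportional to," your sketch does not settle this, and spelling out the constants (or adopting the paper's mass-one normalisation) is the one place where detail is still missing.
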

\begin{proof}
From the definition of $\bar {\nu}_{p_0}$ in \eqref{Hmoibarnupi}, one can easily derive that, for any $\nu>\bar {\nu}_{p_0}$,  there exist $(m_0,w_0)\in \mathcal{M}$  and $y\in\mathbb R^N$  such that 
\begin{align}\label{eq-nu}
\bar {\nu}_{p_0}\leq H_{m_0,p_0}(y)=\int_{\mathbb R^N}|x+y|^{p_0}m_0(x)dx\leq \nu.\end{align} 
Since  $(m_0,w_0)\in \mathcal{M}$ is  a minimizer of (\ref{GNinequalitybest}), we have from (\ref{GNinequalitybest}) and Lemma \ref{poholemma} that 
\begin{align}\label{428negativebeta1}
\int_{\mathbb R^N}m_0\,dx=1,~C_L\int_{\mathbb R^N}\bigg|\frac{w_0}{m_0}\bigg|^{\gamma'}m_0\,dx=1,\text{ and }\frac{N}{N+\gamma'}\int_{\mathbb R^N}m^{1+\frac{\gamma'}{N}}_0\,dx=\frac{1}{a^*}.
\end{align}
Let    $x_j\in Z_0$ with $Z_0$ given by \eqref{eq-Z0}, and define 
\begin{align}\label{429negativebeta2}
m_{\tau}(x)=\tau^{N}m_0(\tau(x-x_j)-y),~~w_{\tau}(x)=\tau^{N+1}w_0(\tau(x-x_j)-y),
\end{align}
then one finds from (\ref{428negativebeta1}) and (\ref{429negativebeta2}) that  
\begin{align*}
C_L\int_{\mathbb R^N}\bigg|\frac{w_{\tau}}{m_\tau}\bigg|^{\gamma'}m_\tau\,dx=\tau^{\gamma'} C_L\int_{\mathbb R^N}\bigg|\frac{w_0}{m_0}\bigg|^{\gamma'}m_0\,dx=\tau^{\gamma'},
\end{align*}
\begin{align*}
\frac{N}{N+\gamma'}\int_{\mathbb R^N}m^{1+\frac{\gamma'}{N}}_{\tau}\,dx=\frac{N}{N+\gamma'}\tau^{\gamma'}\int_{\mathbb R^N}m_0^{1+\frac{\gamma'}{N}}\,dx=\frac{\tau^{\gamma'}}{a^*},
\end{align*}
and
\begin{align}\label{combining1432negativerefine}
\int_{\mathbb R^N}(V_1+V_2)m_{\tau}\,dx=&\int_{\mathbb R^N}(V_1+V_2)\bigg(\frac{x+y}{\tau}+x_j\bigg)m_0(x)\,dx\nonumber\\
=&\frac{1}{\tau^{p_0}}\int_{\mathbb R^N}\frac{(V_1+V_2)\big(\frac{x+y}{\tau}+x_j\big)}{\big|\frac{x+y}{\tau}\big|^{p_0}}|x+y|^pm_0\,dx.
\end{align}
Note from \eqref{eq-Z0} that 
\begin{align}\label{combining1432negativerefine2}
\lim_{\tau\rightarrow+\infty}\frac{(V_1+V_2)\big(\frac{x+y}{\tau}+x_j\big)}{\big|\frac{x+y}{\tau}\big|^{p_0}}=\mu.
\end{align}
Combining \eqref{eq-nu}, (\ref{combining1432negativerefine}) with (\ref{combining1432negativerefine2}), one can get as $\tau\to+\infty$,
\begin{align*}
\int_{\mathbb R^N}(V_1+V_2)m_{\tau}(x)\,dx=\frac{\mu\nu}{\tau^{p_0}}+O\bigg(\frac{1}{\tau^{p_0}}\bigg).
\end{align*}
Finally, by taking 
$\tau=\bigg(\frac{\mu\nu p_0a^*}{2\gamma'\big(a^*-\frac{\alpha_1+\alpha_2+2\beta}{2}\big)}\bigg)^{\frac{1}{\gamma'+p_0}}$
in (\ref{energy1p3}), we obtain
\begin{align*}
0\leq e_{\alpha_1,\alpha_2,\beta}&\leq\mathcal E(m_{\tau},w_{\tau},m_{\tau},w_{\tau})=\tau^{\gamma'}\bigg[2-\frac{\alpha_1+\alpha_2+2\beta}{a^*}\bigg]+\frac{\mu\nu}{\tau^{p_0}}+O\bigg(\frac{1}{\tau^{p_0}}\bigg)\nonumber\\
=&\frac{\gamma'+p_0}{p_0}\bigg(\frac{\mu\nu p_0}{\gamma'}\bigg)^{\frac{\gamma'}{\gamma'+p_0}}\bigg(\frac{2}{a^*}\bigg)^{\frac{p_0}{\gamma'+p_0}}\bigg(a^*-\frac{\alpha_1+\alpha_2+2\beta}{2}\bigg)^{\frac{p_0}{\gamma'+p_0}}(1+o_{\tau}(1)),
\end{align*}
which indicates \eqref{2point1negativebetanotes} since $\nu>\bar {\nu}_{p_0}$ is arbitrary. 
\end{proof}
Now, we are ready to prove Theorem \ref{thm16refinedblowup}, which is
\vspace{2mm}

{\it{Proof of Theorem \ref{thm16refinedblowup}:}}
\begin{proof}
In light of (\ref{scalingthm31profile}), we compute
\begin{align}\label{3point1notesbetanegative2}
e_{\alpha_1,\alpha_2,\beta}=&\mathcal E(m_{1,\varepsilon},w_{1,\varepsilon},m_{2,\varepsilon},w_{2,\varepsilon})\nonumber\\
=&\sum_{i=1}^2\bigg[\varepsilon^{-\gamma'}C_L\int_{\mathbb R^N}\bigg|\frac{w_{i,\varepsilon}}{m_{i,\varepsilon}}\bigg|^{\gamma'}m_{i,\varepsilon}\,dx-\frac{\alpha_i\varepsilon^{-\gamma'}}{1+\frac{\gamma'}{N}}\int_{\mathbb R^N}m_{i,\varepsilon}^{1+\frac{\gamma'}{N}}\,dx+\int_{\mathbb R^N}V(\varepsilon x+x_{\varepsilon})m_{i,\varepsilon}\,dx\bigg]\nonumber\\
&-\frac{2\beta\varepsilon^{-\gamma'}}{1+\frac{\gamma'}{N}}\int_{\mathbb R^N}m^{\frac{1}{2}+\frac{\gamma'}{2N}}_{1,\varepsilon}m^{\frac{1}{2}+\frac{\gamma'}{2N}}_{2,\varepsilon}\,dx,
\end{align}
where we redefine $x_{1,\varepsilon}$ as $x_{\varepsilon}$ here and in the sequel for simplicity.  Noting that $V_1(x_0)=V_2(x_0)$ shown in Theorem \ref{thm13attractive}, we find there exists some  $ 1\leq j\leq l$ such that $x_0=x_j$.  Then, we rewrite the potential energy as 
\begin{align}\label{3point2notesnegativebeta2}
\int_{\mathbb R^N}V_i(\varepsilon x+x_{\varepsilon})m_{i,\varepsilon}(x)\,dx=\varepsilon^{p_j}\int_{\mathbb R^N}\frac{V_i(\varepsilon x+x_{\varepsilon})}{|\varepsilon x+x_{\varepsilon}-x_j|^{p_j}}\bigg|x+\frac{x_{\varepsilon}-x_j}{\varepsilon}\bigg|^{p_j}m_{i,\varepsilon}\,dx,
\end{align}
where $i=1,2.$  In addition, since $x_{\varepsilon}\rightarrow x_j$, we obtain
\begin{align*}
\lim_{\varepsilon\rightarrow 0^+}\sum_{i=1}^2\frac{V_i(\varepsilon x+x_{\varepsilon})}{|\varepsilon x+x_{\varepsilon}-x_j|^{p_{j}}}=\mu_j\text{ a.e. in } \mathbb R^N,
\end{align*}
where $\mu_j$ is defined in $\eqref{mujthm1point5}$.  Without loss of generality, we assume $p_{2j}\geq p_{1j}=p_j$ with $p_{1j}$ and $p_{2j}$ defined by \eqref{Vi125negativebeta2024}.

Now, we claim that 
    \begin{align}\label{claimattractiverefined}
    p_j=p_0=\max\{p_1,\cdots,p_l\},\text{ and }\big|\frac{x_\varepsilon-x_j}{\varepsilon}\big|\text{ is uniformly bounded as }\varepsilon \rightarrow 0^+.
    \end{align}
    To show (\ref{claimattractiverefined}), we argue by contradiction and obtain either $p_j<p_0$ or up to a subsequence, 
    \begin{align*}
    \lim_{\varepsilon\rightarrow 0^+}\bigg|\frac{x_{\varepsilon}-x_j}{\varepsilon}\bigg|=+\infty.
    \end{align*}
    By using (\ref{3point2notesnegativebeta2}) and $m_{i,\varepsilon}\rightarrow m_0\text{ in }L^1\cap L^\infty$ shown in Theorem \ref{thm13attractive}, one deduces that for any $\Gamma>0$ large enough,
    \begin{align}\label{441negativebeta}
    &\lim_{\varepsilon\rightarrow 0}\varepsilon^{-p_0}\int_{\mathbb R^N}V_1(\varepsilon x+x_{\varepsilon})m_{1,\varepsilon}\,dx\nonumber\\
    =&\lim_{\varepsilon\rightarrow 0}\varepsilon^{p_j-p_0}\int_{\mathbb R^N}\frac{V_1(\varepsilon x+x_{\varepsilon})}{|\varepsilon x+x_{\varepsilon}-x_j|^{p_j}}\big|x+\frac{x_{\varepsilon}-x_j}{\varepsilon}\big|^{p_j}m_{1,\varepsilon}\,dx \geq \Gamma.
\end{align}
Recall the definition of $\varepsilon$ shown in \eqref{defvarepsilon316} and the estimate of \eqref{thm13conclusion2},
then we find
\begin{align}\label{interactionbetanegative442}
\int_{\mathbb R^N}m_{1,\varepsilon}^{\frac{1}{2}+\frac{\gamma'}{2N}}m^{\frac{1}{2}+\frac{\gamma'}{2N}}_{2,\varepsilon}\,dx&=\int_{\mathbb R^N}m^{1+\frac{\gamma'}{N}}_{1,\varepsilon}\,dx+\int_{\mathbb R^N}\bigg(m^{\frac{1}{2}+\frac{\gamma'}{2N}}_{2,\varepsilon}-m^{\frac{1}{2}+\frac{\gamma'}{2N}}_{1,\varepsilon}\bigg)m^{\frac{1}{2}+\frac{\gamma'}{2N}}_{1,\varepsilon}\,dx\nonumber\\
&=\int_{\mathbb R^N}m^{1+\frac{\gamma'}{N}}_{1,\varepsilon}\,dx+o_{\varepsilon}(1).
\end{align}
Thus, one finds
\begin{align}\label{3p5notesbetanegative}
&C_L\int_{\mathbb R^N}\bigg|\frac{w_{1,\varepsilon}}{m_{1,\varepsilon}}\bigg|^{\gamma'}m_{1,\varepsilon}\,dx-\frac{\alpha_1}{1+\frac{\gamma'}{N}}\int_{\mathbb R^N}m_{1,\varepsilon}^{1+\frac{\gamma'}{N}}\,dx-\frac{2\beta}{1+\frac{\gamma'}{N}}\int_{\mathbb R^N}m_{1,\varepsilon}^{\frac{1}{2}+\frac{\gamma'}{2N}}m_{2,\varepsilon}^{\frac{1}{2}+\frac{\gamma'}{2N}}\,dx\nonumber\\
=&C_L\int_{\mathbb R^N}\big|\frac{w_{1,\varepsilon}}{m_{1,\varepsilon}}\big|^{\gamma'}m_{1,\varepsilon}\,dx-\frac{\alpha_1+2\beta}{1+\frac{\gamma'}{N}}\int_{\mathbb R^N}m_{1,\varepsilon}^{1+\frac{\gamma'}{N}}\,dx+o_{\varepsilon}(1)\nonumber\\
\geq &\bigg(1-\frac{\alpha_1+2\beta}{a^*}\bigg)\int_{\mathbb R^N}\bigg|\frac{w_{1,\varepsilon}}{m_{1,\varepsilon}}\bigg|^{\gamma'}m_{1,\varepsilon}\,dx+o_{\varepsilon}(1).
\end{align}
In addition, in light of \eqref{thm13conclusionpart23} and \eqref{defvarepsilon316}, one has
\begin{align}\label{alsohavefact445negatvebeta}
\int_{\mathbb R^N}\bigg|\frac{w_{i,\varepsilon}}{m_{i,\varepsilon}}\bigg|^{\gamma'}m_{i,\varepsilon}\,dx=1+o_{\varepsilon}(1),~i=1,2,
\end{align}
and obtain from (\ref{3p5notesbetanegative}) that 
\begin{align}\label{3p6notesbetanegative}
\int_{\mathbb R^N}\bigg|\frac{w_{2,\varepsilon}}{m_{2,\varepsilon}}\bigg|^{\gamma'}m_{2,\varepsilon}\,dx-\frac{\alpha_2}{1+\frac{\gamma'}{N}}\int_{\mathbb R^N}m^{1+\frac{\gamma'}{N}}_{2,\varepsilon}\,dx\geq \bigg(1-\frac{\alpha_2}{a^*}\bigg)\int_{\mathbb R^N}\bigg|\frac{w_{2,\varepsilon}}{m_{2,\varepsilon}}\bigg|^{\gamma'}m_{2,\varepsilon}\,dx.
\end{align}
Upon substituting (\ref{441negativebeta}), (\ref{interactionbetanegative442}) and  (\ref{3p6notesbetanegative}), (\ref{alsohavefact445negatvebeta}), one finds from  (\ref{3point1notesbetanegative2}) that 
\begin{align*}
\mathcal E(m_{1,\varepsilon},w_{1,\varepsilon},m_{2,\varepsilon},w_{2,\varepsilon}) \geq &\varepsilon^{-\gamma'}\bigg[1-\frac{\alpha_1+\alpha_2+2\beta}{a^*}\bigg](1+o(1))+\Gamma\varepsilon^{-p_0}\nonumber\\
\geq &(1+o_{\varepsilon}(1))\frac{p_0+\gamma'}{p_0}\bigg(\frac{p_0\Gamma}{\gamma'}\bigg)^{\frac{\gamma'}{\gamma'+p_0}}\bigg(\frac{2}{a^*}\bigg)^{\frac{\gamma'}{\gamma'+p_0}}\bigg(a^*-\frac{\alpha_1+\alpha_2+2\beta}{2}\bigg)^{\frac{p_0}{\gamma'+p_0}},
\end{align*}
which is contradicted to Lemma \ref{lemma4120240724}.  This completes the proof of claim (\ref{claimattractiverefined}).  Hence, we obtain $\exists y_0\in\mathbb R^N$ such that 
\begin{align*}
\lim_{\varepsilon\rightarrow 0}\frac{x_{\varepsilon}-x_{j}}{\varepsilon}=y_0.
\end{align*}
We next show that $y_0$ satisfies  (\ref{136refinedbetanegativenoteshold2}).  Since $p_i=p_0$, it follows from Theorem \ref{thm13attractive} that 
\begin{align}\label{4100negativebetanotes}
&\lim_{\varepsilon\rightarrow 0^+}\varepsilon^{-p_0}\int_{\mathbb R^N}\sum_{i=1}^2V_i(\varepsilon x+x_{\varepsilon})m_{i,\varepsilon}(x)\,dx\nonumber\\
=&\lim_{\varepsilon\rightarrow 0^+}\int_{\mathbb R^N}\frac{\sum_{i=1}^2V_i\bigg(\varepsilon(x+\frac{x-x_j}{\varepsilon})+x_j\bigg)}{\big|\varepsilon(x+\frac{x-x_j}{\varepsilon})\big|^{p_0}}\bigg|x+\frac{x-x_{\varepsilon}}{\varepsilon}\bigg|^{p_0}m_{i,\varepsilon}\,dx\nonumber\\
\geq &\mu_j\int_{\mathbb R^N}|x+y_0|^{p_0}m_0\,dx\geq \mu \bar{\nu}_{p_0} ,
\end{align}
where the last two inequalities hold if and only if one has (\ref{136refinedbetanegativenoteshold2}).  As a consequence, we deduce from (\ref{3p5notesbetanegative}) and (\ref{3p6notesbetanegative}) that  
\begin{align}\label{4101negativebetanotes}
e_{\alpha_1,\alpha_2,\beta}\geq &\varepsilon^{-\gamma'}\bigg[2-\frac{\alpha_1+\alpha_2+2\beta}{a^*}\bigg][1+o(1)]+\varepsilon^{p_0}\mu\bar{\nu}_{p_0}[1+o(1)]\nonumber\\
\geq &[1+o(1)]\bigg[\frac{\gamma'+p_0}{p_0}\bigg(\frac{\mu\bar{\nu}_{p_0} p_0}{\gamma'}\bigg)^{\frac{\gamma'}{\gamma'+p_0}}\bigg(\frac{2}{a^*}\bigg)^{\frac{p_0}{\gamma'+p_0}}\bigg(a^*-\frac{\alpha_1+\alpha_2+2\beta}{2}\bigg)^{\frac{p_0}{\gamma'+p_0}}\bigg],
\end{align}
where the equality in the second inequality holds if and only if 
\begin{align*}
\varepsilon=\bigg(\frac{2\gamma'}{p_0\mu\bar{\nu}_{p_0} a^*}\bigg)^{\frac{1}{\gamma'+p_0}}\bigg(a^*-\frac{\alpha_1+\alpha_2+2\beta}{2}\bigg)^{\frac{1}{\gamma'+p_0}}\big(1+o_\varepsilon(1)\big).
\end{align*}
Combining the lower bound (\ref{4101negativebetanotes}) with the upper bound (\ref{2point1negativebetanotes}), we find the equalities in  \eqref{4100negativebetanotes} and (\ref{4101negativebetanotes}) hold.  As a consequence, we obtain (\ref{136refinedbetanegativenoteshold}) and (\ref{136refinedbetanegativenoteshold2}) and finish the proof of this theorem.
\end{proof}

\section{Asymptotic Profiles of Ground States with $\beta<0$}\label{sect520240929}
In this section, we shall discuss the concentration phenomena within (\ref{ss1}) under the repulsive case with $\beta<0.$  Similarly as shown in Section \ref{sect4multipopulation}, we first investigate the basic blow-up profiles of ground states with some assumptions imposed on the potentials, which is summarized as Theorem \ref{thm15blowupnegative}.  Then, we investigate the refined blow-up profiles shown in Theorem \ref{thm17multipopulation} when potentials satisfy local polynomial expansions.

{\it{Proof of Theorem \ref{thm15blowupnegative}:}}
\begin{proof}
As shown in the proof of Theorem \ref{thm11multi}, we have proved that when $\beta<0$,
\begin{align}\label{C4negativenotes}
\lim_{\textbf{a}\nearrow {\textbf{a}}^*}e_{\alpha_1,\alpha_2,\beta}=0.
\end{align}
In addition, 
one obtains from (\ref{GNinequalityused}) that 
\begin{align*}
\mathcal E_{\alpha_i}^i(m_i,w_i)\geq 0\text{~if~}\alpha_i<a^*,
\end{align*}
where $\mathcal E_{\alpha_i}^i(m_i,w_i),$ $i=1,2$ are given by (\ref{mathcalealphaii089}).
Moreover, noting that $\mathcal E_{\alpha_1,\alpha_2,\beta}(m_1,w_1,m_2,w_2)$ defined by (\ref{energy1p3}) can be written as 
\begin{align*}
\mathcal E_{\alpha_1,\alpha_2,\beta}(m_1,w_1,m_2,w_2)=\sum_{i=1}^2\mathcal E_{\alpha_i}^i(m_i,w_i)-\frac{2\beta N}{N+\gamma'}\int_{\mathbb R^N}m_1^{\frac{1}{2}+\frac{\gamma'}{2N}}m_2^{\frac{1}{2}+\frac{\gamma'}{2N}}\,dx,
\end{align*}
 we find from (\ref{C4negativenotes}) that (\ref{C7notesnegative}), (\ref{C8notesnegative}) and (\ref{C9notesnegative}) hold.

 Next, we shall prove (\ref{C10betanegative}) and argue by contradiction.  Assume that
 \begin{align*}
\limsup_{\textbf{a}\nearrow \textbf{a}^*}\int_{\mathbb R^N}C_L\bigg|\frac{w_{i,\textbf{a}}}{m_{i,\textbf{a}}}\bigg|^{\gamma'}m_{i,\textbf{a}}\,dx<+\infty, \text{ for } i=1 \text{ or }2,
\end{align*}
then it follows from (\ref{GNinequalityused}) that 
\begin{align*}
\limsup_{\textbf{a}\nearrow \textbf{a}^*}\int_{\mathbb R^N}m_{i,\textbf{a}}^{1+\frac{\gamma'}{N}}\,dx<+\infty.
\end{align*}
Therefore, we deduce from (\ref{C7notesnegative}) that 
\begin{align*}
\lim_{\textbf{a}\nearrow \textbf{a}^*}\mathcal E_{\textbf{a}^*}^i(m_{1,\textbf{a}},w_{1,\textbf{a}},m_{2,\textbf{a}},w_{2,\textbf{a}})=\lim_{\textbf{a}\nearrow \textbf{a}^*}\mathcal E_{\alpha_i}^i(m_{1,\textbf{a}},w_{1,\textbf{a}},m_{2,\textbf{a}},w_{2,\textbf{a}})=0=e^i_{a^*}.
\end{align*}
This implies that $\{(m_{i,\textbf{a}},w_{i,\textbf{a}})\}$ is a 
 is a bounded minimizing sequence of $e^i_{a^*}$ given by (\ref{problem51innotescopy}) and its limit is a minimizer of $e^{i}_{a^*}$. 
This is a contradiction to the fact that $e^i_{a^*}$ does not admit any minimizer as shown in \cite{cirant2024critical}.  Hence, one finds (\ref{C10betanegative}) holds.

Let
\begin{align*}
\hat \varepsilon_i:=\bigg(C_L\int_{\mathbb R^N}\bigg|\frac{w_{i,\textbf{a}}}{m_{i,\textbf{a}}}\bigg|^{\gamma'}m_{i,\textbf{a}}\,dx\bigg)^{-\frac{1}{\gamma'}}\rightarrow 0\text{ as }\textbf{a}\nearrow \textbf{a}^*.
\end{align*}
Recall that $(m_{1,\textbf{a}},w_{1,\textbf{a}},m_{2,\textbf{a}},w_{2,\textbf{a}})\in\mathcal K$ is  a minimizer and by using Lemma \ref{lemma32multimfg}, one has for $i=1,2$,
\begin{align*}
\lambda_{i,\textbf{a}}=&C_L\int_{\mathbb R^N}\bigg|\frac{w_{i,\textbf{a}}}{m_{i,\textbf{a}}}\bigg|^{\gamma'}m_{i,\textbf{a}}\,dx+\int_{\mathbb R^N}V_im_{i,\textbf{a}}\,dx-\alpha_i\int_{\mathbb R^N}m_{i,\textbf{a}}^{1+\frac{\gamma'}{N}}\,dx-\beta\int_{\mathbb R^N}m_{1,\textbf{a}}^{\frac{1}{2}+\frac{\gamma'}{2N}}m_{2,\textbf{a}}^{\frac{1}{2}+\frac{\gamma'}{2N}}\,dx\\
=&\mathcal E_{\alpha_i}^i(m_{i,\textbf{a}},w_{i,\textbf{a}})-\frac{N\alpha_i}{N+\gamma'}\int_{\mathbb R^N}m_{i,\textbf{a}}^{1+\frac{\gamma'}{N}}\,dx-\beta\int_{\mathbb R^N}m_{1,\textbf{a}}^{\frac{1}{2}+\frac{\gamma'}{2N}}m_{2,\textbf{a}}^{\frac{1}{2}+\frac{\gamma'}{2N}}\,dx\\
=&-\frac{\gamma'}{N}\hat \varepsilon_i^{-\gamma'}+o_{\varepsilon_i}(1),
\end{align*}
which implies 
\begin{align}\label{c15notebetanegative}
\lambda_{i,\textbf{a}}\hat\varepsilon_i^{\gamma'}\rightarrow -\frac{\gamma'}{N}\text{~as~}\hat \varepsilon_i\rightarrow 0^+,~i=1,2.
\end{align}
Since $(u_{1,\textbf{a}},u_{2,\textbf{a}})$ is bounded from below, we have $u_{i,\textbf{a}}(x)\rightarrow +\infty$ as $|x|\rightarrow +\infty.$  Thus, there exist $ x_{i,\hat \varepsilon}$, $i=1,2$ such that 
\begin{align*}
u_{i,\hat\varepsilon}(0)=u_{i,\textbf{a}}(x_{i,\hat \varepsilon})=\inf_{x\in\mathbb R^N}u_{i,\textbf{a}}(x).
\end{align*}

By using \eqref{hatepsilonrescaling20240723} and (\ref{ss11new}), we find $(m_{1,\hat\varepsilon},u_{1,\hat \varepsilon},m_{2,\hat\varepsilon},u_{2,\hat \varepsilon})$ satisfies 
\begin{small}
\begin{align}\label{takelimitnotebetanegative}
\left\{\begin{array}{ll}
-\Delta u_{1,\hat\varepsilon}+C_H|\nabla u_{1,\hat\varepsilon}|^{\gamma}+\lambda_{1,\textbf{a}}\hat \varepsilon_1^{\gamma'}=\hat\varepsilon_1^{\gamma'}V_1(\hat \varepsilon_1 x+x_{1,\hat \varepsilon})-\alpha_1m_{1,\hat \varepsilon}^{\frac{\gamma'}{N}}-\beta\big(\frac{\hat \varepsilon_1}{\hat \varepsilon_2}\big)^{\frac{\gamma'}{2}+\frac{N}{2}}m_{1,\hat \varepsilon}^{\frac{\gamma'}{2N}-\frac{1}{2}}m_{2,\hat \varepsilon}
^{\frac{\gamma'}{2N}+\frac{1}{2}}\big(\frac{\hat \varepsilon_1x+x_{1,\hat \varepsilon}-x_{2,\hat \varepsilon}}{\hat \varepsilon_2}\big),\\
-\Delta m_{1,\hat \varepsilon}=C_H\gamma\nabla\cdot(m_{1,\hat \varepsilon}|\nabla u_{1,\hat \varepsilon}|^{\gamma-2}\nabla u_{1,\hat \varepsilon})=-\nabla \cdot w_{1,\hat \varepsilon},\\
-\Delta u_{2,\hat\varepsilon}+C_H|\nabla u_{2,\hat\varepsilon}|^{\gamma}+\lambda_{2,\textbf{a}}\hat \varepsilon_2^{\gamma'}=\hat \varepsilon_2^{\gamma'}V_2(\hat \varepsilon_2 x+x_{2,\hat \varepsilon})-\alpha_2m_{2,\hat \varepsilon}^{\frac{\gamma'}{N}}-\beta\big(\frac{\hat \varepsilon_2}{\hat \varepsilon_1}\big)^{\frac{\gamma'}{2}+\frac{N}{2}}m_{2,\hat \varepsilon}^{\frac{\gamma'}{2N}-\frac{1}{2}}m_{1,\hat \varepsilon}
^{\frac{\gamma'}{2N}+\frac{1}{2}}\big(\frac{\hat \varepsilon_2x+x_{2,\hat \varepsilon}-x_{1,\hat \varepsilon}}{\hat \varepsilon_1}\big),\\
-\Delta m_{2,\hat \varepsilon}=C_H\gamma\nabla\cdot(m_{2,\hat \varepsilon}|\nabla u_{2,\hat \varepsilon}|^{\gamma-2}\nabla u_{2,\hat \varepsilon})=-\nabla \cdot w_{2,\hat \varepsilon}.
\end{array}
\right.
\end{align}
\end{small}
Then by applying the maximum principle on (\ref{takelimitnotebetanegative}), one finds for $i,j=1,2$ and $i\not=j$ that 
\begin{align*}
    \lambda_{i,\textbf{a}}\hat \varepsilon_{i}^{\gamma'}\geq -\alpha_im_{i,\hat \varepsilon}^{\frac{\gamma'}{N}}(0)+\hat \varepsilon_i^{\gamma'}V_i(\hat \varepsilon_i x+x_{i,\hat \varepsilon})-\beta \bigg(\frac{\hat \varepsilon_i}{\hat \varepsilon_j}\bigg)^{\frac{\gamma'}{2}+\frac{N}{2}}m_{i,\hat \varepsilon}^{\frac{\gamma'}{2N}-\frac{1}{2}}(0)m_{j,\hat \varepsilon}^{\frac{\gamma'}{2N}+\frac{1}{2}}\bigg(\frac{x_{i,\hat \varepsilon}-x_{j,\hat \varepsilon}}{\hat \varepsilon_j}\bigg),
\end{align*}
Noting that $\alpha_i>0$, $\beta<0$ and $V_i\geq 0$ with $i=1,2$, we further have from \eqref{c15notebetanegative} when $\alpha_i\nearrow a^*,$ 
\begin{align}\label{c19notesbetanegative}
C\geq m_{i,\varepsilon}^{\frac{\gamma'}{N}}(0)>\frac{\gamma'}{2a^*N}>0,
\end{align}
where $C>0$ is a constant.  Invoking  \eqref{C9notesnegative} and (\ref{C10betanegative}), we obtain
\begin{align}\label{c22notesbetanegative}
    \int_{\mathbb R^N}V_i(\hat \varepsilon_ix+x_{i,\hat \varepsilon})m_{i,\hat \varepsilon}(x)\,dx\rightarrow 0\text{~as~}\textbf{a}\nearrow \textbf{a}^*,
\end{align}
and
\begin{align}\label{c21innotesbetanegative}
\int_{\mathbb R^N}C_L\bigg|\frac{w_{i,\hat \varepsilon}}{m_{i,\hat \varepsilon}}\bigg|^{\gamma'}m_{i,\hat \varepsilon}\,dx=1,~~\int_{\mathbb R^N}m_{i,\hat \varepsilon}^{1+\frac{\gamma'}{N}}\,dx\rightarrow \frac{N+\gamma'}{Na^*}.
\end{align}
Now, we claim up to a subsequence, 
\begin{align}\label{claimc23notesbetanegative}
x_{i,\hat \varepsilon}\rightarrow x_{i}\text{ with }V_i(x_i)=0,~i=1,2.
\end{align}
Indeed, we have from (\ref{c21innotesbetanegative}) and Lemma \ref{lemma21-crucial-cor} that 
\begin{align}\label{c23primenotebetsnegative}
    \limsup_{\hat \varepsilon_1,\hat\varepsilon_2\rightarrow 0^+}\Vert m_{i,\hat \varepsilon}\Vert_{W^{1,\gamma'}(\mathbb R^N)}<+\infty.
\end{align}
Moreover, since $\gamma'>N$, one gets from Morrey's estimate that 
\begin{align}\label{c24notebetsnegative}
\limsup_{\hat \varepsilon_1,\hat\varepsilon_2\rightarrow 0^+}\Vert m_{i,\hat \varepsilon}\Vert_{C^{0,1-\frac{N}{\gamma'}}(\mathbb R^N)}<+\infty.
\end{align}
(\ref{c24notebetsnegative}) together with (\ref{c19notesbetanegative}) gives  that there exists $ R>0$ such that 
\begin{align}\label{c25notesbetanegative}
    m_{i,\hat \varepsilon}(x)\geq \frac{C}{2}>0,~\forall|x|<R,~i=1,2,
\end{align}
where $C>0$ is a constant independent of $\hat \varepsilon_i$.  As a consequence, we obtain claim (\ref{claimc23notesbetanegative}) thanks to  (\ref{c22notesbetanegative}) and (\ref{c25notesbetanegative}).
In light of (\ref{c26notesbetanegative}) and (\ref{claimc23notesbetanegative}), one finds
\begin{align*}
\lim_{\hat \varepsilon_1,\hat \varepsilon_2\rightarrow 0^+}\frac{|x_{1,\hat \varepsilon}-x_{2,\hat \varepsilon}|}{\hat\varepsilon_i}=+\infty,~i=1,2.
\end{align*}

Next, we study the convergence of $(m_{1,\hat\varepsilon},u_{1,\hat\varepsilon},m_{2,\hat\varepsilon},u_{2,\hat\varepsilon})$ as $\hat\varepsilon_i\rightarrow 0$ with $i=1,2.$  First of all, we have from (\ref{c23primenotebetsnegative}) and (\ref{c25notesbetanegative}) that there exist $0\not\equiv, \leq m_i\in W^{1,\gamma'}(\mathbb R^N)$ with $i=1,2$ such that 
\begin{align*}
m_{i,\hat \varepsilon}\rightharpoonup m_i\text{ in }W^{1,\gamma'}(\mathbb R^N).
\end{align*}
Without loss of the generality, we assume
\begin{align}\label{assumenotesbetanegative}
\hat \varepsilon_1\geq \hat \varepsilon_2.
\end{align}
In light of (\ref{c24notebetsnegative}) and (\ref{assumenotesbetanegative}), one has there exists $C>0$ independent of $\hat \varepsilon_1$ and $\hat \varepsilon_2$ such that 
\begin{align*}
\beta\bigg(\frac{\hat \varepsilon_2}{\hat \varepsilon_1}\bigg)^{\frac{N}{2}+\frac{\gamma'}{2}}m_{1,\hat \varepsilon}^{\frac{1}{2}+\frac{\gamma'}{2N}}\bigg(\frac{\hat \varepsilon_2 x+x_{2,\hat \varepsilon}-x_{1,\hat \varepsilon}}{\hat \varepsilon_1}\bigg)m_{2,\hat \varepsilon}^{\frac{\gamma'}{2N}-\frac{1}{2}}(x)\leq C \text{ for all } x\in \mathbb R^N.
\end{align*}
In addition, by using Lemma \ref{sect2-lemma21-gradientu}, one obtains for any $x\in B_{R}(0),$ 
\begin{align}\label{446notenegativebeta}
|\nabla u_{2,\hat \varepsilon}(x)|\leq C_R,
\end{align}
where $C_R>0$ is a constant.  Moreover, the $u_{2,\hat\varepsilon}$-equation in (\ref{takelimitnotebetanegative}) becomes
\begin{align*}
-\Delta u_{2,\hat \varepsilon}=-C_H|\nabla u_{2,\hat \varepsilon}|^{\gamma}+h_{\hat \varepsilon}(x),
\end{align*}
where $$h_{\hat\varepsilon}(x):=-\lambda_{2,\textbf{a}}\hat \varepsilon_2^{\gamma'}+\hat \varepsilon_2^{\gamma'}V_2(\hat \varepsilon_2 x+x_{2,\hat \varepsilon})-\alpha_2m_{2,\hat \varepsilon}^{\frac{\gamma'}{N}}-\beta\big(\frac{\hat \varepsilon_2}{\hat \varepsilon_1}\big)^{\frac{\gamma'}{2}+\frac{N}{2}}m_{2,\hat \varepsilon}^{\frac{\gamma'}{2N}-\frac{1}{2}}m_{1,\hat \varepsilon}
^{\frac{\gamma'}{2N}+\frac{1}{2}}\big(\frac{\hat \varepsilon_2x+x_{2,\hat \varepsilon}-x_{1,\hat \varepsilon}}{\hat \varepsilon_1}\big).$$ is given by \eqref{rewriteueqthm13c2regularity} with $\varepsilon$ replaced by $\hat\varepsilon$.  We further find from (\ref{446notenegativebeta}) that $|-C_H|\nabla u_{2,\hat \varepsilon}|^{\gamma}+g_{\hat \varepsilon}|\leq \tilde C_R$ with $\tilde C_R>0$.  Then we apply the standard elliptic regularity to get $\Vert u_{2,\hat \varepsilon}\Vert_{C^{2,\theta}(B_R)}\leq C_R,$
where $C_R>0$ is a constant and $\theta\in(0,1).$  Thus, we take the limit in the $u_{2,\hat\varepsilon}$-equation and $m_{2,\hat\varepsilon}$-equation of (\ref{takelimitnotebetanegative}), use the diagonalization procedure and Arzel\`{a}-Ascoli theorem to deduce that 
$u_{2,\hat \varepsilon}\rightarrow u_2\text{ in }C^{2,\hat\theta}(\mathbb R^N)$ as $\hat\varepsilon_1$, $\hat\varepsilon_2\rightarrow 0^+,$
and $(m_2,u_2)$ satisfies
\begin{align*}
\left\{\begin{array}{ll}
-\Delta u_2+C_H|\nabla u_2|^{\gamma}-\frac{\gamma'}{N}=a^*m_2,\\
-\Delta m_2=C_H\gamma\nabla\cdot(m_2|\nabla u_2|^{\gamma-2}\nabla u_2)=-\nabla\cdot w_2,\\
0<\int_{\mathbb R^N}m_2\,dx\leq 1.
\end{array}
\right.
\end{align*}
Similar as the derivation of (\ref{4182024106}), one uses Lemma \ref{poholemma} to get $\int_{\mathbb R^N}m_2\,dx=1.$  It follows that $m_{2,\hat \varepsilon}\rightarrow m_{2}$ in $L^1(\mathbb R^N)$.  Combining this with (\ref{c24notebetsnegative}) , we deduce 
\begin{align}\label{455lqbetanegative}
m_{2,\hat \varepsilon}\rightarrow m_2 \text{ in }L^q(\mathbb R^N), \forall q\geq 1.
\end{align}
Invoking Lemma \ref{lowerboundVkgenerallemma22},  (\ref{c15notebetanegative}) and \eqref{c24notebetsnegative}, one has
\begin{align}\label{47820240811lowerbound}
u_{2,\hat \varepsilon}(x)\geq C\max\big\{|x|,\big(\varepsilon_2^{\gamma'}V_2(\hat \varepsilon_2 x+x_{2,\hat \varepsilon})\big)^{\frac{1}{\gamma}}\big\},\text{ if }|x|>R,
\end{align}
where $C>0$ and $R>0$ are constants independent of $\hat\varepsilon_1$ and $\hat\varepsilon_2$.  Indeed, it suffices to prove $u_{2,\hat\varepsilon}(x)\geq C|x|$ for some constant $C>0$ when $|x|>R.$  To this end, we find from (\ref{takelimitnotebetanegative}) that when $\hat \varepsilon_i$, $i=1,2$ are small,  
\begin{align}\label{20240811newlowerboundu}
-\Delta u_{2,\hat\varepsilon}+C_H|\nabla u_{2,\hat \varepsilon}|^{\gamma}+\lambda_0\geq \frac{\gamma'}{3N}-\alpha_2m_{2,\hat \varepsilon}^{\frac{\gamma'}{N}}-\beta\bigg(\frac{\hat \varepsilon_2}{\hat \varepsilon_1}\bigg)^{\frac{\gamma'}{2}+\frac{N}{2}}m_{2,\hat \varepsilon}^{\frac{\gamma'}{2N}-\frac{1}{2}}m_{1,\hat \varepsilon}
^{\frac{\gamma'}{2N}+\frac{1}{2}}\bigg(\frac{\hat \varepsilon_2x+x_{2,\hat \varepsilon}-x_{1,\hat \varepsilon}}{\hat \varepsilon_1}\bigg),
\end{align}
where $\lambda_0:=-\frac{\gamma'}{2N}$ and we have used (\ref{c15notebetanegative})  and the positivity of $V_2$.  In addition, (\ref{assumenotesbetanegative}) and \eqref{455lqbetanegative} indicate that as $|x|\rightarrow+\infty,$
\begin{align}\label{20240811newlowerboundu1}
-\alpha_2m_{2,\hat \varepsilon}^{\frac{\gamma'}{N}}-\beta\bigg(\frac{\hat \varepsilon_2}{\hat \varepsilon_1}\bigg)^{\frac{\gamma'}{2}+\frac{N}{2}}m_{2,\hat \varepsilon}^{\frac{\gamma'}{2N}-\frac{1}{2}}m_{1,\hat \varepsilon}
^{\frac{\gamma'}{2N}+\frac{1}{2}}\bigg(\frac{\hat \varepsilon_2x+x_{2,\hat \varepsilon}-x_{1,\hat \varepsilon}}{\hat \varepsilon_1}\bigg)\rightarrow 0\text{ uniformly in }\hat \varepsilon_1\text{ and }\hat \varepsilon_2.
\end{align}
Thus, one further obtains from (\ref{20240811newlowerboundu}) and (\ref{20240811newlowerboundu1}) that 
\begin{align}\label{48120240811}
-\Delta u_{2,\hat\varepsilon}+C_H|\nabla u_{2,\hat \varepsilon}|^{\gamma}+\lambda_0>0\text{ when }|x|\gg 1.
\end{align}
Now, we fix any $|\tilde x|$ large enough and define 
$$h(x):=K_1|\tilde x|\chi\bigg(\frac{x}{|\tilde x|}\bigg),$$
where constant $K_1>0$ will be chosen later and $\chi(\cdot)\geq 0$ denotes the smooth cut-off function satisfying $\chi\equiv 0$ when $x\in(0,\frac{1}{2})\cup(\frac{3}{2},+\infty).$  We compute to get 
\begin{align}\label{48220240811}
-\Delta h+C_H|\nabla h|^{\gamma}+\lambda_0\leq \frac{K_1}{|\tilde x|}+C_HK_1^{\gamma}+\lambda_0<0,
\end{align}
if we choose $K_1$ small enough.  Applying the comparison principle into (\ref{48120240811}) and (\ref{48220240811}), one has 
\begin{align*}
u_{2,\hat\varepsilon}(x)\geq h(x)\text{ for }\frac{1}{2}|\tilde x|<|x|<\frac{3}{2}|\tilde x|,
\end{align*}
which finishes the proof of (\ref{47820240811lowerbound}).

Next, we claim that for any $p>1$, there exist $ R>0$ and $C>0$ such that
$$m_{2,\hat \varepsilon}(x)\leq C|x|^{-p},~\forall|x|>R. $$
Indeed, let $\phi=u_{2,\hat \varepsilon}^p$, then we have 
\begin{align}\label{C34notesbetanegativelya}
&-\Delta\phi+C_H\gamma|\nabla u_{2,\hat \varepsilon}|^{\gamma-2}\nabla u_{2,\hat \varepsilon}\cdot\nabla 
\phi\nonumber\\
=&pu^{p-1}_{2,\hat \varepsilon}[-\Delta u_{2,\hat \varepsilon}-(p-1)\frac{|\nabla u_{2,\hat \varepsilon}|^2}{u_{2,\hat \varepsilon}}+C_H\gamma|\nabla u_{2,\hat \varepsilon}|^{\gamma}]\nonumber\\
=&pu_{2,\hat \varepsilon}^{p-1}\bigg[C_H(\gamma-1)|\nabla u_{2,\hat \varepsilon}|^{\gamma}-\lambda_2\hat \varepsilon_2^{\gamma'}-(p-1)\frac{|\nabla u_{2,\hat \varepsilon}|^2}{u_{2,\hat \varepsilon}}\nonumber\\
&+\hat \varepsilon_2^{\gamma'}V_2(\hat \varepsilon_2x+x_{2,\hat \varepsilon})-\alpha_2m_{2,
\hat \varepsilon}^{\frac{\gamma'}{N}}-\beta\bigg(\frac{\hat \varepsilon_2}{\hat \varepsilon_1}\bigg)^{\frac{\gamma'}{2}+\frac{N}{2}}m_{2,\hat \varepsilon}^{\frac{\gamma'}{2N}-\frac{1}{2}}m_{1,\hat \varepsilon}
^{\frac{\gamma'}{2N}+\frac{1}{2}}\bigg(\frac{\hat \varepsilon_2x+x_{2,\hat \varepsilon}-x_{1,\hat \varepsilon}}{\hat \varepsilon_1}\bigg)\bigg]\nonumber\\
:=&pu_{2,\hat\varepsilon}^{p-1}G_{\hat\varepsilon}(x).
\end{align}
Lemma \ref{sect2-lemma21-gradientu} implies
\begin{align}\label{456betanegative20240723}
|\nabla u_{2,\hat \varepsilon}|\leq C\big[1+\hat \varepsilon^{\gamma'}_{2}V_2(\hat \varepsilon_2x+x_{2,\hat \varepsilon})\big]^{\frac{1}{\gamma}}.
\end{align}
Hence, we deduce from (\ref{47820240811lowerbound}) that
\begin{align*}
\frac{|\nabla u_{2,\hat \varepsilon}|^{2-\gamma}}{u_{2,\hat \varepsilon}}\leq C\frac{\bigg[1+\hat \varepsilon_2^{\gamma'}V_2(\hat \varepsilon_2 x+x_{2,\hat \varepsilon})^{\frac{2-\gamma}{\gamma}}\bigg]}{\max\{|x|,[\hat \varepsilon_2^{\gamma'}V_2(\hat \varepsilon_2 x+x_{2,\hat \varepsilon})]^{\frac{1}{\gamma}}\}}\leq \frac{C_H(\gamma-1)}{2(p-1)},~\text{for~}|x|>R.
\end{align*}
Thus,
\begin{align*}
&C_H(\gamma-1)|\nabla u_{2,\hat \varepsilon}|^{\gamma}-(p-1)\frac{|\nabla u_{2,\hat \varepsilon}|^2}{u_{2,\hat \varepsilon}}\nonumber\\
=&|\nabla u_{2,\hat \varepsilon}|^{\gamma}\bigg[C_H(\gamma-1)-(p-1)\frac{|\nabla u_{2,\hat \varepsilon}|^{2-\gamma}}{u_{2,\hat \varepsilon}}\bigg]>0~\text{for }|x|>R.
\end{align*}
In light of (\ref{C34notesbetanegativelya}), we further find
\begin{align}\label{459notesbetanegative}
-\Delta\phi+C_H\gamma|\nabla u_{2,\hat \varepsilon}|^{\gamma-2}\nabla u_{2,\hat \varepsilon}\cdot\nabla \phi\geq Cpu^{p-1}_{2,\hat \varepsilon},~\text{for }|x|>R.
\end{align}
By using Theorem 3.1 in \cite{metafune2005global}, one gets 
$\int_{\mathbb R^N}m_{2,\hat \varepsilon}u^{p-1}_{2,\hat \varepsilon}\,dx<+\infty$. 
We next show that 
\begin{align}\label{461betanegative20240723}
  \limsup_{\hat \varepsilon_1,\hat\varepsilon_2\rightarrow 0^+}\int_{\mathbb R^N}m_{2,\hat \varepsilon}u^{p-1}_{2,\hat \varepsilon}\,dx<+\infty. 
\end{align}
Indeed, we test the $m_{2,\hat\varepsilon}$-equation in (\ref{takelimitnotebetanegative}) against $\phi$ and integrate it by parts to obtain  
\begin{align*}
0=\int_{\mathbb R^N}m_{2,\hat\varepsilon}[-\Delta\phi+C_H\gamma|\nabla u_{2,\hat \varepsilon}|^{\gamma-2}\nabla u_{2,\hat \varepsilon}\cdot\nabla \phi]\,dx=p\int_{\mathbb R^N}m_{2,\hat\varepsilon}G_{\hat\varepsilon}u_{2,\hat\varepsilon}^{p-1}\,dx.
\end{align*}
It follows that for some large $R_1>0$ independent of $\hat\varepsilon_i$, $i=1,2$,
\begin{align}\label{collecting1202407231}
\int_{\{x||x|>R_1\}}m_{2,\hat\varepsilon}G_{\hat\varepsilon}u_{2,\hat\varepsilon}^{p-1}\,dx=-\int_{\{x||x|\leq R_1\}}m_{2,\hat\varepsilon}G_{\hat\varepsilon}u_{2,\hat\varepsilon}^{p-1}\,dx.
\end{align}
On one hand, in light of (\ref{459notesbetanegative}), one has 
\begin{align}\label{collecting1202407232}
\int_{\{x||x|>R_1\}}m_{2,\hat\varepsilon}u_{2,\hat\varepsilon}^{p-1}\,dx\leq C\int_{\{x||x|>R_1\}}m_{2,\hat\varepsilon}G_{\hat\varepsilon}u_{2,\hat\varepsilon}^{p-1}\,dx,
\end{align}
where $C>0$ is some constant independent of $\hat \varepsilon_i$, $i=1,2$.  On the other hand, by fixing $\inf\limits_{x\in\mathbb R^N} u_{2,\hat\varepsilon}=1$ in (\ref{C34notesbetanegativelya}), we get $|G_{\hat\varepsilon}|\leq C$ for some constant $C>0$ independent of $\hat\varepsilon_i,~ i=1,2.$  Combining this with \eqref{456betanegative20240723}, one has from the boundedness of $|x_{2,\hat \varepsilon}|$ that
\begin{align}\label{collecting1202407233}
\bigg|\int_{\{x||x|\leq R_1\}}m_{2,\hat\varepsilon}G_{\hat\varepsilon}u_{2,\hat\varepsilon}^{p-1}\,dx\bigg|\leq C\int_{\mathbb R^N}m_{2,\hat\varepsilon}\,dx\leq \tilde C,
\end{align}
where  $\tilde C>0$ is  independent of $\hat\varepsilon_i,~ i=1,2.$  Collecting (\ref{collecting1202407231}), (\ref{collecting1202407232}) and (\ref{collecting1202407233}), one finds (\ref{461betanegative20240723}) holds.
Moreover, (\ref{461betanegative20240723}) indicates 
\begin{align*}
m_{2,\hat \varepsilon}(x)\leq C|x|^{1-p},~\forall p>1,
\end{align*}
 As a consequence, for any fixed $x\in\mathbb R^N$, we have
\begin{align*}
\bigg|\frac{\hat \varepsilon_1x+x_{1,\hat \varepsilon}-x_{2,\hat \varepsilon}}{\hat \varepsilon_2}\bigg|\geq \frac{\hat \varepsilon_1|x|}{\hat \varepsilon_2}+\frac{1}{2}\frac{|x_{1,\hat \varepsilon}-x_{2,\hat \varepsilon}|}{\hat \varepsilon_2}\geq \frac{C}{\hat \varepsilon_2}.
\end{align*}
  It follows that 
\begin{align}\label{464implies20240723}
\bigg(\frac{\hat \varepsilon_1}{\hat \varepsilon_2}\bigg)^{\frac{\gamma'}{2}+\frac{N}{2}}m^{\frac{\gamma'}{2N}+\frac{1}{2}}_{2,\hat \varepsilon}\bigg(\frac{\hat \varepsilon_1x+x_{1,\hat \varepsilon}-x_{2,\hat\varepsilon}}{\hat \varepsilon_2}\bigg)\leq \bigg(\frac{\hat \varepsilon_1}{\hat \varepsilon_2}\bigg)^{\frac{\gamma'}{2}+\frac{N}{2}}\hat \varepsilon_2^p\leq \hat \varepsilon_1^{\frac{\gamma'}{2}-\frac{N}{2}}\text{ by choosing }p>\frac{\gamma'}{2}+\frac{N}{2}.
\end{align}
We rewrite the $u_{1,\hat \varepsilon}$-equation in (\ref{takelimitnotebetanegative}) as 
\begin{align}\label{takelimitbefore20240723}
&-\Delta u_{1,\hat \varepsilon}+C_H|\nabla u_{1,\hat \varepsilon}|^{\gamma}+\lambda_{1,\textbf{a}}\varepsilon_1^{\gamma'}\nonumber\\
=&\hat \varepsilon_1V_1(\hat \varepsilon_1 x+x_{1,\hat \varepsilon})-\alpha_1m_1^{\frac{\gamma'}{N}}-\beta\bigg(\frac{\hat \varepsilon_1}{\hat \varepsilon_2}\bigg)^{\frac{\gamma'}{2}+\frac{N}{2}}m_{1,\hat \varepsilon}m_{2,\hat \varepsilon}\bigg(\frac{\hat \varepsilon_1 x+x_{1,\hat \varepsilon}-x_{2,\hat \varepsilon}}{\hat \varepsilon_2}\bigg):=IV_{\hat \varepsilon}.
\end{align}
Since \eqref{464implies20240723} indicates for any $\hat R>0,$ 
$$|IV_{\hat \varepsilon}|\leq C_{\hat R},~\text{for }|x|<\hat R,$$
we have from Lemma \ref{sect2-lemma21-gradientu} that
\begin{align*}
|\nabla u_{1,\hat \varepsilon}|\leq C_{\hat R},\text{ for }|x|<\hat R.
\end{align*}
Thus, we find by the standard diagonal procedure that  
\begin{align*}
u_{1,\varepsilon}\rightarrow u_1\text{ in }C^{1,\alpha}_{\text{loc}}(\mathbb R^N)\text{ with }\alpha\in(0,1),
\end{align*}
then take the limit in (\ref{takelimitbefore20240723}) to obtain $u_1$ satisfies
\begin{align*}
\left\{\begin{array}{ll}
-\Delta u_1+C_H|\nabla u_1|^{\gamma}-\frac{\gamma'}{N}=a^* m_1^{\frac{\gamma'}{N}},\\
-\Delta m_1=C_H\gamma\nabla\cdot(m_1|\nabla u_1|^{\gamma-2}\nabla u_1),\\
0<\int_{\mathbb R^N}m_1\,dx\leq 1.
\end{array}
\right.
\end{align*}
Moreover, we deduce from  Lemma \ref{poholemma}  and \eqref{GNinequalityused} that
$\int_{\mathbb R^N}m_1\,dx=1.$
It then follows  from \eqref{c23primenotebetsnegative} that 
\begin{align*}
m_{1,\hat \varepsilon}\rightarrow m_1\text{ in }L^p(\mathbb R^N),~\forall p\geq 1,
\end{align*}
which finishes the proof of this theorem.
 \end{proof}

Next, we focus on the refined blow-up rate of minimizers under the case $\beta\leq 0$ and proceed to complete the proof of Theorem \ref{thm17multipopulation}.  Before proving Theorem \ref{thm17multipopulation}, we collect the results of the existence of minimizers to (\ref{problem51innotescopy}) and the corresponding asymptotic profiles as follows
\begin{lemma}\label{lemma42notesbetacopy}
Problem (\ref{problem51innotescopy}) admits a  minimizer $(m_i,w_i)\in W^{1,p}(\mathbb R^N)\times L^p(\mathbb R^N)$ with $p>1$, 
where $w_i=-C_H\gamma m_i|\nabla u_i|^{\gamma-2}\nabla u_i$ with $u_i\in C^2(\mathbb R^N)$ for 
$i=1,2$ .  Moreover, the following conclusions hold: 
\begin{itemize}
    \item[(i).] $\epsilon_i:=\bigg(C_L\int_{\mathbb R^N}\big|\frac{w_i}{m_i}\big|^{\gamma'}m_i\,dx\bigg)^{-\frac{1}{\gamma'}}\rightarrow 0$\text{ as }$\alpha_i\nearrow a^*$;
    \item[(ii).] Let $x_{i,\epsilon_i}$ be a global minimum point of $u_i$, then
    \begin{align}\label{5p6notebetacopy}
    u_{i,\epsilon}:=\epsilon_i^{\frac{2-\gamma}{\gamma-1}}u_i(\epsilon_i x+x_{i,\epsilon_i}),~m_{i,\epsilon}:=\epsilon_i^{N}m_i(\epsilon_i x+x_{i,\epsilon_i}),~w_{i,\epsilon}:=\epsilon_i^{N+1}w(\epsilon_i x+x_{i,\epsilon_i})
    \end{align}
    satisfies up to a subsequence,
    \begin{align}\label{5p7betanotecopy}
    u_{i,\epsilon_i}\rightarrow \bar u_i\text{ in }C^2_{\text{loc}}(\mathbb R^N),~m_{i,\epsilon_i}\rightarrow \bar m_i\text{ in }L^p(\mathbb R^N),\forall p\in[1,+\infty],~w_{i,\epsilon_i}\rightarrow \bar w_i\text{ in }L^{\gamma'}(\mathbb R^N),
    \end{align}
    where $(\bar m_i,\bar w_i)$ is a minimizer of (\ref{GNinequalitybest}) and $(\bar m_i,\bar u_i)$ satisfies (\ref{equmpotentialfree});
    \item[(iii).] if $V_i$ satisfies (\ref{52viinnotesbetacopy}) and set 
    \begin{align}\label{nupidefinedbybetacopy}
\nu_{q_i}:=\inf_{y\in\mathbb R^N}\int_{\mathbb R^N}|x+y|^{q_i}\bar m_i\,dx,
    \end{align}
    then $\nu_{q_i}=\bar\nu_{q_i}$ with $\bar \nu_{q_i}$ given in (\ref{Hmoibarnupi}) and 
\begin{align}\label{eialphainotebetacopy}
    e^i_{\alpha_i}:=(1+o(1))\frac{q_i+\gamma'}{q_i}\bigg(\frac{q_i\bar\nu_{q_i}b_i}{\gamma'}\bigg)^{\frac{\gamma'}{\gamma'+1}}\bigg(\frac{a^*-\alpha_i}{a^*}\bigg)^{\frac{q_i}{\gamma'+q_i}},~   \epsilon_i=(1+o(1))\bigg(\frac{\gamma'(a^*-\alpha_i)}{a^*b_i\bar\nu_{q_i}q_i}\bigg)^{\frac{1}{\gamma'+q_i}}.
    \end{align}
Moreover, we have 
    \begin{align*}
    \frac{x_{i,\epsilon_i}-x_i}{\epsilon_i}\rightarrow y_i,
    \end{align*}
    where $y_i\in\mathbb R^N$ satisfies
    \begin{align*}
    H_{\bar m_i,q_i}(y_i)=\inf_{y\in\mathbb R^N}H_{\bar m_i,q_i}(y)=\bar \nu_{q_i}.
    \end{align*}
    In particular, there exist $ R>0$, $C>0$ and $\kappa_1,$ $\delta_0>0$ small such that 
    \begin{align}\label{algebraicdecaynotebeta}
0<m_{i,\epsilon_i}\leq Ce^{-\frac{\kappa_1}{2}|x|^{\delta_0}}\text{ for }|x|>R,
\end{align}
\end{itemize}
\end{lemma}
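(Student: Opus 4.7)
The plan is to treat this lemma as the single-species restriction of the main results in Sections~\ref{sec-existence}--\ref{sect4multipopulation}, so each item reduces to an argument already used in the two-population case (or in \cite{cirant2024critical}) with $\beta=0$ and one equation removed. For the existence of a minimizer I would proceed exactly as in Lemma~\ref{lemma31existenceleastenergy}: when $\alpha_i<a^*$, the Gagliardo--Nirenberg inequality \eqref{GNinequalityused} gives
\[
\mathcal E^i_{\alpha_i}(m,w) \geq \Big(1-\frac{\alpha_i}{a^*}\Big) C_L\int_{\mathbb R^N}\Big|\frac{w}{m}\Big|^{\gamma'}m\,dx + \int_{\mathbb R^N}V_i m\,dx,
\]
so any minimizing sequence is uniformly bounded in kinetic and potential energy. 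Lemma~\ref{lemma21-crucial-cor} then yields $W^{1,\gamma'}\times L^{\gamma'}$ control, the confining compactness of Lemma~5.1 of \cite{cirant2024critical} provides strong $L^1\cap L^{1+\gamma'/N}$ convergence, and Fatou's lemma closes the argument. The associated $(u_i,\lambda_i)\in C^2(\mathbb R^N)\times\mathbb R$ is then produced by the one-component version of the duality argument in Lemma~\ref{lemma32multimfg}, yielding $w_i=-C_H\gamma m_i|\nabla u_i|^{\gamma-2}\nabla u_i$.

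For part~(i) I would test $\mathcal E^i_{\alpha_i}$ against the concentrating pair \eqref{byusingintro1} centered at a zero of $V_i$, which gives $\lim_{\alpha_i\nearrow a^*}e^i_{\alpha_i}=0$; the blow-up $\epsilon_i\to 0$ then follows by the contradiction argument of Theorem~\ref{thm13attractive}, since boundedness of the kinetic energy would force the weak limit to be a minimizer of $e^i_{a^*}$, contradicting the non-existence result at the critical threshold from \cite{cirant2024critical}. For part~(ii) I would substitute \eqref{5p6notebetacopy} into the Euler--Lagrange system, use Lemma~\ref{poholemma} to derive $\lambda_i\epsilon_i^{\gamma'}\to -\gamma'/N$, and combine Lemmas~\ref{sect2-lemma21-gradientu}, \ref{lowerboundVkgenerallemma22}, and \ref{lemma21-crucial-cor} to obtain uniform $C^2_{\mathrm{loc}}$ bounds on $u_{i,\epsilon}$ together with uniform $W^{1,\gamma'}$ bounds on $m_{i,\epsilon}$. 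A maximum-principle argument at $x=0$ combined with Morrey's embedding forces $\bar m_i(0)>0$, so the weak limit is nontrivial; Lemma~\ref{poholemma} and \eqref{GNinequalityused} then identify the mass as one and $(\bar m_i,\bar w_i)$ as a minimizer of \eqref{GNinequalitybest} solving \eqref{equmpotentialfree}.

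Part~(iii) splits into matching upper/lower bounds and the decay estimate \eqref{algebraicdecaynotebeta}. The upper bound on $e^i_{\alpha_i}$ is obtained exactly as in Lemma~\ref{lemma4120240724} by concentrating $\bar m_i$ with $\int|x+y|^{q_i}\bar m_i\,dx$ close to $\bar\nu_{q_i}$ at scale $\tau$ around $x_i$ and optimizing $\tau^{\gamma'}(1-\alpha_i/a^*)+b_i\bar\nu_{q_i}\tau^{-q_i}+O(\tau^{-q_i})$ in $\tau$. The matching lower bound comes from inserting \eqref{5p6notebetacopy} and showing
\[
\epsilon_i^{-q_i}\int_{\mathbb R^N}V_i(\epsilon_i x+x_{i,\epsilon_i})m_{i,\epsilon}\,dx \longrightarrow b_i H_{\bar m_i,q_i}(y_i),
\]
where $y_i=\lim(x_{i,\epsilon_i}-x_i)/\epsilon_i$; equality in the resulting Young-type inequality simultaneously forces $y_i$ to minimize $H_{\bar m_i,q_i}$ (so that $\nu_{q_i}=\bar\nu_{q_i}$) and pins $\epsilon_i$ to the announced value in \eqref{eialphainotebetacopy}.

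The main obstacle is the uniform-in-$\epsilon_i$ stretched-exponential decay \eqref{algebraicdecaynotebeta}. Following the proof of Theorem~\ref{thm15blowupnegative}, I would test the rescaled Fokker--Planck equation against $\phi=u_{i,\epsilon}^p$ and exploit the positivity of $C_H(\gamma-1)|\nabla u_{i,\epsilon}|^\gamma-(p-1)|\nabla u_{i,\epsilon}|^2/u_{i,\epsilon}$ outside a uniform ball, which is available because the comparison barrier $K_1|\tilde x|\chi(x/|\tilde x|)$ produces the uniform linear bound $u_{i,\epsilon}(x)\geq C|x|$ and Lemma~\ref{sect2-lemma21-gradientu} makes $|\nabla u_{i,\epsilon}|^{2-\gamma}/u_{i,\epsilon}$ small. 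This bootstrap yields $\int m_{i,\epsilon}u_{i,\epsilon}^{p-1}\,dx\leq C_p$ uniformly in $\epsilon_i$ for every $p>1$, hence arbitrary polynomial decay. To upgrade to the stretched exponential $e^{-\kappa_1|x|^{\delta_0}/2}$ one must feed \eqref{52viinnotesbetacopy} back into the rescaled HJB equation and invoke the sharper barrier $u_{i,\epsilon}(x)\geq C|x|^{1+q_i/\gamma'}$ from Lemma~\ref{lowerboundVkgenerallemma22}; the delicate point is that $\kappa_1$ and $\delta_0$ must be chosen independent of $\epsilon_i$, which requires carefully tracking the size of the rescaled potential $\epsilon_i^{\gamma'}V_i(\epsilon_i x+x_{i,\epsilon_i})$ on regions that grow as $\epsilon_i\to 0$, and where the hypothesis $\gamma'>N$ enters through Morrey's embedding.
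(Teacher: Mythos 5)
Your outline for the existence of a minimizer, for (i), and for the convergence statement in (ii) matches the route the paper takes (which simply defers these to \cite{cirant2024critical} as single-population analogues of earlier arguments). For (iii), however, your argument for $\nu_{q_i}=\bar\nu_{q_i}$ as you have phrased it is circular: you build the upper-bound test function by ``concentrating $\bar m_i$ with $\int|x+y|^{q_i}\bar m_i\,dx$ close to $\bar\nu_{q_i}$,'' but a priori the best you can achieve with $\bar m_i$ alone is $\nu_{q_i}$, and whether $\nu_{q_i}=\bar\nu_{q_i}$ is precisely what you are trying to prove. The upper bound must instead be built from a general $(m,w)\in\mathcal M$ with $H_{m,q_i}(y)<\bar\nu_{q_i}+\eta$, exactly as in Lemma~\ref{lemma4120240724}; once you do that, your chain $H_{\bar m_i,q_i}(y_i)\geq\nu_{q_i}\geq\bar\nu_{q_i}$ forced to collapse by matching the upper bound is a valid and in fact cleaner route than the paper's, which proves the rates with $\nu_{q_i}$ first (citing \cite{cirant2024critical}) and then shows $\nu_{q_i}=\bar\nu_{q_i}$ by a separate contradiction argument.

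The genuine gap is the decay estimate \eqref{algebraicdecaynotebeta}. Testing the rescaled Fokker--Planck equation against $\phi=u_{i,\epsilon}^p$ yields, as you correctly observe, only $\sup_\epsilon\int m_{i,\epsilon}u_{i,\epsilon}^{p-1}\,dx<\infty$ and hence algebraic decay $m_{i,\epsilon}\leq C_p|x|^{-p}$; this is the argument from Theorem~\ref{thm15blowupnegative}, and no amount of iterating over $p$ produces a stretched exponential. Your ``upgrade'' via a sharper lower barrier on $u_{i,\epsilon}$ and \eqref{52viinnotecopybeta} does not close the gap because the obstruction is the choice of Lyapunov function, not the barrier. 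The paper's Proposition~\ref{appenexp} works with $\Phi=e^{\kappa u_\varepsilon^{\delta_0}}$ for $\delta_0\in(0,\min\{\gamma-1,1\})$, verifies that $-\Delta\Phi+C_H\gamma|\nabla u_\varepsilon|^{\gamma-2}\nabla u_\varepsilon\cdot\nabla\Phi\geq c\kappa\delta_0 u_\varepsilon^{\delta_0-1}\Phi$ outside a ball of radius $R$ independent of $\varepsilon$ (this needs $C_H(\gamma-1)|\nabla u_\varepsilon|^\gamma\gtrsim\kappa\delta_0 u_\varepsilon^{\delta_0-1}|\nabla u_\varepsilon|^2$, controlled by Lemma~\ref{sect2-lemma21-gradientu} and Lemma~\ref{lowerboundVkgenerallemma22}), and then tests the Fokker--Planck equation to get $\sup_\varepsilon\int m_\varepsilon e^{\kappa u_\varepsilon^{\delta_0}}u_\varepsilon^{\delta_0-1}\,dx<\infty$, which together with $u_\varepsilon\gtrsim|x|$ gives \eqref{A020241008} with $\kappa_1,\delta_0$ independent of $\varepsilon$. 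You should replace the $u^p$ test with this exponential Lyapunov function to make the step go through.
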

\begin{proof}
Proceeding the similar arguments shown in \cite{cirant2024critical}, we are able to show Conclusion (i), (ii) and (iii) with $\bar\nu_{p_i}$ replaced by $\nu_{p_i}$.  (\ref{algebraicdecaynotebeta}) follows directly from Proposition \ref{appenexp} shown in Appendix \ref{appendixA}.  It is left to show $\bar \nu_{p_i}=\nu_{p_i}$ with $\nu_{p_i}$ defined by (\ref{nupidefinedbybetacopy}). 
 First of all, it is straightforward to see that $\bar \nu_{q_i}\leq \nu_{q_i}$.  Then, we argue by contradiction and assume 
\begin{align*}
\bar\nu_{q_i}<\nu_{q_i},~\text{ for }i=1 \text{ or }2.
\end{align*}
In light of the definition of $\bar \nu_{q_i}$ given in (\ref{Hmoibarnupi}), we find that there exists $ (m,w)\in \mathcal M$ with $\mathcal M$ defined by \eqref{mathcalMdefinedbythm17}  and $y_i\in \mathbb R^N$ such that 
\begin{align*}
\nu_{i0}:=\inf_{y\in\mathbb R^N}\int_{\mathbb R^N}|x+y|^{q_i}m(x)\,dx=\int_{\mathbb R^N}|x+y_i|^{q_i}m(x)\,dx<\nu_{q_i}.
\end{align*}
Let
\begin{align*}
m_{\tau}:=\tau^{N}m(\tau(x-x_i)-y_i),~w_{\tau}:=\tau^{N+1}w(\tau(x-x_i)-y_i),
\end{align*}
where $\tau=\bigg(\frac{b_i\nu_{i0} q_ia^*}{2\gamma'\big(a^*-\frac{\alpha_1+\alpha_2+2\beta}{2}\big)}\bigg)^{\frac{1}{\gamma'+q_i}}$.  Then one can obtain 
\begin{align}\label{4109reachcontradiction2024089}
e^i_{\alpha_i}\leq (1+o(1))\frac{q_i+\gamma'}{p_i}\bigg(\frac{q_i\nu_{i0}b_i}{\gamma'}\bigg)^{\frac{\gamma'}{\gamma'+1}}\bigg(\frac{a^*-\alpha_i}{a^*}\bigg)^{\frac{q_i}{\gamma'+q_i}}.
\end{align}
Whereas, (\ref{eialphainotebetacopy}) gives that 
\begin{align*}
e^i_{\alpha_i}=(1+o(1))\frac{q_i+\gamma'}{q_i}\bigg(\frac{q_i\nu_{q_i}b_i}{\gamma'}\bigg)^{\frac{\gamma'}{\gamma'+1}}\bigg(\frac{a^*-\alpha_i}{a^*}\bigg)^{\frac{q_i}{\gamma'+q_i}},
\end{align*}
which reaches a contradiction to (\ref{4109reachcontradiction2024089}).
\end{proof}
Now, we establish the lower and upper bounds of $e_{\alpha_1,\alpha_2,\beta}$ in the following lemma.   

\begin{lemma}\label{lemma4320240809}
    Assume that each $V_i$ satisfies \eqref{52viinnotesbetacopy} with $x_1\not=x_2$ and (\ref{assumethm17notecopybeta}) holds.  Let $(m_{1,\textbf{a}},w_{1,\textbf{a}},m_{2,\textbf{a}},w_{2,\textbf{a}})$ be a minimizer of $e_{\alpha_1,\alpha_2,\beta}$ defined by (\ref{problem1p1}) with $\beta<0.$  Then for any $q>\max\{q_1,q_2\}$, we have there exists $C_q>0$ such that 
    \begin{align}\label{513notebetanegativecopy}
e^1_{\alpha_1}+e^2_{\alpha_2}\leq& e_{\alpha_1,\alpha_2,\beta}=\mathcal E(m_{1,\textbf{a}},w_{1,\textbf{a}},m_{2,\textbf{a}},w_{2,\textbf{a}})\nonumber\\
    \leq & e^1_{\alpha_1}+e^2_{\alpha_2}+C_q\tilde \epsilon_2^q,\text{ as }(\alpha_1,\alpha_2)\nearrow (a^*,a^*),
    \end{align}
    where $e_{\alpha_i}^i$, $i=1,2$ are defined by (\ref{problem51innotescopy}).
    In particular, the following estimates hold:
\begin{align}\label{514notebetanegativecopy}
    e^i_{\alpha_i}\leq \mathcal E^i_{\alpha_i}(m_{i,\textbf{a}}.w_{i,\textbf{a}})\leq e^i_{\alpha_i}+C_q\tilde \epsilon^q_{2},~i=1,2.
    \end{align}
\end{lemma}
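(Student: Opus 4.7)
My plan is to establish the two bounds in \eqref{513notebetanegativecopy} separately and then derive \eqref{514notebetanegativecopy} as a corollary. The lower bound is immediate from the sign of the coupling when $\beta<0$. Since $-\frac{2\beta N}{N+\gamma'}\int_{\mathbb R^N} m_1^{1/2+\gamma'/(2N)}m_2^{1/2+\gamma'/(2N)}\,dx\ge 0$, for any $(m_1,w_1,m_2,w_2)\in\mathcal K$ we have from \eqref{energy1p3}
\begin{align*}
\mathcal E_{\alpha_1,\alpha_2,\beta}(m_1,w_1,m_2,w_2)\ge \mathcal E^1_{\alpha_1}(m_1,w_1)+\mathcal E^2_{\alpha_2}(m_2,w_2)\ge e^1_{\alpha_1}+e^2_{\alpha_2},
\end{align*}
and passing to the infimum gives the required lower bound.

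For the upper bound, I use as a test pair the individual minimizers $(m_1,w_1)$ and $(m_2,w_2)$ of $e^1_{\alpha_1}$ and $e^2_{\alpha_2}$ supplied by Lemma \ref{lemma42notesbetacopy}. This pair is admissible for $e_{\alpha_1,\alpha_2,\beta}$, hence
\begin{align*}
e_{\alpha_1,\alpha_2,\beta}\le e^1_{\alpha_1}+e^2_{\alpha_2}+\frac{2(-\beta)N}{N+\gamma'}\int_{\mathbb R^N} m_1^{1/2+\gamma'/(2N)}m_2^{1/2+\gamma'/(2N)}\,dx,
\end{align*}
so it suffices to prove the cross integral is $O(\tilde\epsilon_2^q)$ for every $q>\max\{q_1,q_2\}$. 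Using the rescaling \eqref{5p6notebetacopy}, I write $m_i(y)=\epsilon_i^{-N}m_{i,\epsilon_i}\bigl((y-x_{i,\epsilon_i})/\epsilon_i\bigr)$. Because $x_{i,\epsilon_i}\to x_i$ and $x_1\ne x_2$, fixing $R_0:=|x_1-x_2|/4$ the balls $B(x_{1,\epsilon_1},2R_0)$ and $B(x_{2,\epsilon_2},2R_0)$ are disjoint for $(\alpha_1,\alpha_2)$ close enough to $(a^*,a^*)$. I split the integral over these two regions. On $B(x_{1,\epsilon_1},2R_0)$ we have $|y-x_{2,\epsilon_2}|\ge R_0$, so the argument of $m_{2,\epsilon_2}$ has modulus at least $R_0/\epsilon_2$, and the stretched-exponential decay \eqref{algebraicdecaynotebeta} yields $m_2(y)\le C\epsilon_2^{-N}\exp\bigl(-\tfrac{\kappa_1}{2}(R_0/\epsilon_2)^{\delta_0}\bigr)$. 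The remaining $m_1$ factor is controlled by the uniform $L^p$ bound coming from \eqref{5p7betanotecopy}, giving an integrand whose polynomial-in-$\epsilon$ prefactor is absorbed by the stretched-exponential. A symmetric argument handles the complementary region using the decay of $m_{1,\epsilon_1}$.

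To convert this into a bound in $\tilde\epsilon_2$, I invoke $\epsilon_i\asymp\tilde\epsilon_i$ from \eqref{eialphainotebetacopy} and the hypothesis $\tilde\epsilon_1=O(\tilde\epsilon_2^s)$ with $s\in(0,1]$, so the exponential factors $\exp(-c/\epsilon_i^{\delta_0})$ are dominated by $\exp(-c'/\tilde\epsilon_2^{s\delta_0})$, which is $o(\tilde\epsilon_2^q)$ for every $q>0$. This proves the desired upper bound and completes \eqref{513notebetanegativecopy}. Finally, \eqref{514notebetanegativecopy} follows by combining both halves: writing
\begin{align*}
\mathcal E(m_{1,\textbf{a}},w_{1,\textbf{a}},m_{2,\textbf{a}},w_{2,\textbf{a}})=\sum_{i=1,2}\mathcal E^i_{\alpha_i}(m_{i,\textbf{a}},w_{i,\textbf{a}})+\frac{2(-\beta)N}{N+\gamma'}\int_{\mathbb R^N} m_{1,\textbf{a}}^{1/2+\gamma'/(2N)}m_{2,\textbf{a}}^{1/2+\gamma'/(2N)}\,dx,
\end{align*}
the three summands on the right, once shifted by $e^i_{\alpha_i}$ respectively, are all non-negative; their total equals $e_{\alpha_1,\alpha_2,\beta}$, which by \eqref{513notebetanegativecopy} exceeds $e^1_{\alpha_1}+e^2_{\alpha_2}$ by at most $C_q\tilde\epsilon_2^q$. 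Consequently each $\mathcal E^i_{\alpha_i}(m_{i,\textbf{a}},w_{i,\textbf{a}})-e^i_{\alpha_i}$ is at most $C_q\tilde\epsilon_2^q$.

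The principal technical step will be the cross-term estimate: tracking the negative powers of $\epsilon_1,\epsilon_2$ produced by the change of variables and verifying that the stretched-exponential decay \eqref{algebraicdecaynotebeta} (ultimately an ingredient whose proof is relegated to Proposition \ref{appenexp} in Appendix \ref{appendixA}) is strong enough to override them. The separation hypothesis $x_1\ne x_2$ is exactly what prevents the two concentrating profiles from interacting to leading order, and it is what makes a polynomial-in-$\tilde\epsilon_2$ bound (in fact a much stronger exponential one) possible.
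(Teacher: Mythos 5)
Your proposal follows the paper's proof closely: the lower bound from the sign of $\beta<0$, the upper bound via the test pair of individual minimizers from Lemma \ref{lemma42notesbetacopy}, the macroscopic split of the cross integral exploiting $x_1\neq x_2$, and the deduction of \eqref{514notebetanegativecopy} from \eqref{513notebetanegativecopy} all coincide with the paper's argument. The only differences are cosmetic: the paper first downgrades the stretched-exponential decay \eqref{algebraicdecaynotebeta} to the polynomial bound $m_{i,\epsilon}(x)\lesssim |x|^{-\hat q}$ and then counts powers of $\tilde\epsilon_2$ for a fixed large $\hat q$, whereas you invoke the exponential decay directly; and the paper establishes \eqref{514notebetanegativecopy} via a short contradiction rather than the direct decomposition you give, though the underlying logic is identical. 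You are right to single out the absorption of the negative powers of $\epsilon_1,\epsilon_2$ produced by the change of variables as the step requiring care; this is exactly where the paper uses hypothesis \eqref{assumethm17notecopybeta} (read there, in effect, as a two-sided comparability $\tilde\epsilon_1\asymp\tilde\epsilon_2^s$) to translate the $\epsilon_1$-prefactors into powers of $\tilde\epsilon_2$ that the decay can overcome.
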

\begin{proof}
Noting that $\beta<0$, we deduce from (\ref{problem51innotescopy}) 
 and (\ref{mathcalealphaii089}) that 
\begin{align}\label{515notebetacopynegative}
\mathcal E(m_{1,\textbf{a}},w_{1,\textbf{a}},m_{2,\textbf{a}},w_{2,\textbf{a}})\geq \sum_{i=1}^2\mathcal E_{\alpha_i}^i(m_{i,\textbf{a}},w_{i,\textbf{a}})\geq e^1_{\alpha_1}+e^2_{\alpha_2}.
\end{align}
Moreover, let $(m_i,w_i)$ be the minimizers of $e^i_{\alpha_i}$, $i=1,2$ obtained in Lemma \ref{lemma42notesbetacopy}, then one has
\begin{align}\label{516notecopybetanegative}
e_{\alpha_1,\alpha_2,\beta}\leq\mathcal E(m_1,w_1,m_2,w_2)=&\sum_{i=1}^2\mathcal E_{\alpha_i}^i(m_i,w_i)-\frac{2\beta}{1+\frac{\gamma'}{N}}\int_{\mathbb R^N}m_1^{\frac{1}{2}+\frac{\gamma'}{2N}}m_2^{\frac{1}{2}+\frac{\gamma'}{2N}}\,dx\nonumber\\
=&e^1_{\alpha_1}+e^2_{\alpha_2}-\frac{2\beta}{1+\frac{\gamma'}{N}}\int_{\mathbb R^N}m_1^{\frac{1}{2}+\frac{\gamma'}{2N}}m_2^{\frac{1}{2}+\frac{\gamma'}{2N}}\,dx.
\end{align}
By using  (\ref{5p6notebetacopy}), one finds
\begin{align}\label{517notebetacopy}
\int_{\mathbb R^N}m_1^{\frac{1}{2}+\frac{\gamma'}{2N}}m_2^{\frac{1}{2}+\frac{\gamma'}{2N}}\,dx=(\epsilon_1\epsilon_2)^{-N(\frac{1}{2}+\frac{\gamma'}{2N})}\epsilon_1^N\int_{\mathbb R^N}m^{\frac{1}{2}+\frac{\gamma'}{2N}}_{1,\epsilon}(x)m^{\frac{1}{2}+\frac{\gamma'}{2N}}_{2,\epsilon}\bigg(\frac{\epsilon_1}{\epsilon_2}x+\frac{x_{1,\epsilon_1}-x_{2,\epsilon_2}}{\epsilon_2}\bigg)\,dx.
\end{align}
Since $x_{i,\epsilon_i}\rightarrow x_i$, $i=1,2$ and $x_1\not=x_2$, we take $R:=\frac{1}{4}|x_1-x_2|$ and obtain for any $x\in B_{R/\epsilon_1}(0)$,
\begin{align*}
\bigg|\frac{x_{1,\epsilon_1}-x_{2,\epsilon_2}}{\epsilon_2}+\frac{\epsilon_1}{\epsilon_2}x\bigg|\geq \frac{|x_{1,\epsilon_1}-x_{2,\epsilon_2}|}{\epsilon_2}-\frac{\epsilon_1}{\epsilon_2}|x|\geq &\frac{3}{4}\frac{|x_1-x_2|}{\epsilon_2}-\frac{R}{\epsilon_2}\nonumber\\
=&\frac{1}{2}\frac{|x_1-x_2|}{\epsilon_2}=O\bigg(\frac{1}{\epsilon_2}\bigg)\rightarrow +\infty.
\end{align*}
It then follows from (\ref{algebraicdecaynotebeta}) that there exists constant $C>0$ such that 
\begin{align*}
m_{2,\epsilon}\bigg(\frac{\epsilon_1}{\epsilon_2}x+\frac{x_{1,\epsilon_1}-x_{2,\epsilon_2}}{\epsilon_2}\bigg)\leq C\bigg|\frac{x_1-x_2}{\epsilon_2}\bigg|^{-\hat q },~\forall x\in B_{R/\epsilon_1}(0),
\end{align*}
which implies 
\begin{align}\label{518betanotecopy}
\int_{|x|<\frac{R}{\epsilon_1}}m^{\frac{1}{2}+\frac{\gamma'}{2N}}_{1,\epsilon}(x)m_{2,\epsilon}^{\frac{1}{2}+\frac{\gamma'}{2N}}\bigg(\frac{\epsilon_1}{\epsilon_2}x+\frac{x_{1,\epsilon_1}-x_{2,\epsilon_2}}{\epsilon_2}\bigg)\,dx\leq C\epsilon_2^{\hat q \big(\frac{1}{2}+\frac{\gamma'}{2N}\big)}\int_{\mathbb R^N}m_{1,\epsilon}^{\frac{1}{2}+\frac{\gamma'}{2N}}\,dx\leq C\epsilon_2^{\hat q\big(\frac{1}{2}+\frac{\gamma'}{2N}\big)},
\end{align}
where we have used  (\ref{5p7betanotecopy}).  In addition, invoking (\ref{algebraicdecaynotebeta}), we obtain $m_{1,\epsilon}$ satisfies for any $\hat q>0,$
\begin{align}\label{by519notebetacopy}
m_{1,\epsilon}(x)\leq C_{\hat q}|x|^{-\hat q }\text{ for }|x|>{R},
\end{align}
where $C_{\hat q}>0$ is some constant. 
 On the other hand, (\ref{5p7betanotecopy}) indicates that 
\begin{align}\label{4120combining20809}
\limsup_{\epsilon_2\rightarrow 0^+}\Vert m_{2,\epsilon}\Vert_{L^\infty}<+\infty.
\end{align}
Combining (\ref{by519notebetacopy}) with (\ref{4120combining20809}), we deduce that 
\begin{align}\label{520notebetacopy}
\int_{|x|>\frac{R}{\epsilon_1}}m^{\frac{1}{2}+\frac{\gamma'}{2N}}_{1,\epsilon}m^{\frac{1}{2}+\frac{\gamma'}{2N}}_{2,\epsilon}\bigg(\frac{\epsilon_1}{\epsilon_2}x+\frac{x_{1,\epsilon_1}-x_{2,\epsilon_2}}{\epsilon_2}\bigg)\,dx\leq C\int_{|x|>\frac{R}{\epsilon_1}}m_{1,\epsilon_1}^{\frac{1}{2}+\frac{\gamma'}{2N}}\,dx\nonumber\\
\leq C_{\hat q}\int_{\frac{R}{\epsilon_1}}^{+\infty}r^{-\hat q \big(\frac{1}{2}+\frac{\gamma'}{2N}\big)}r^{N-1}\,dr\leq C_{\hat q }\epsilon_1^{\hat q \big(\frac{1}{2}+\frac{\gamma'}{2N}\big)-N},
\end{align}
 where we have used (\ref{by519notebetacopy}). 
 Upon collecting (\ref{517notebetacopy}), (\ref{518betanotecopy}) and (\ref{520notebetacopy}), we deduce that 
 \begin{align}\label{4122havefrom2024089}
 \int_{\mathbb R^N}m_1^{\frac{1}{2}+\frac{\gamma'}{2N}}m_2^{\frac{1}{2}+\frac{\gamma'}{2N}}\,dx\leq C_{\hat q}(\epsilon_1\epsilon_2)^{-N(\frac{1}{2}+\frac{\gamma'}{2N})}\epsilon_1^N\bigg(\epsilon_2^{\hat q(\frac{1}{2}+\frac{\gamma'}{2N})}+\epsilon_1^{\hat q(\frac{1}{2}+\frac{\gamma'}{2N})-N}\bigg).
 \end{align}
Since (\ref{assumethm17notecopybeta}) and (\ref{eialphainotebetacopy}) imply up to a subsequence,
\begin{align*}
\lim_{\alpha_i\nearrow a^*}\frac{\epsilon_i}{\tilde \epsilon_i}=C_i,~i=1,2, C_i>0\text{ are constants},
\end{align*}
we have from (\ref{4122havefrom2024089}) that  
\begin{align}\label{4124obtainfrom20204089}
 \int_{\mathbb R^N}m_1^{\frac{1}{2}+\frac{\gamma'}{2N}}m_2^{\frac{1}{2}+\frac{\gamma'}{2N}}\,dx\leq  &C_{\hat q}\tilde \epsilon_1^{-N(\frac{\gamma'}{2N}-\frac{1}{2})}\tilde \epsilon_2^{(\hat q-N)\big(\frac{1}{2}+\frac{\gamma'}{2N}\big)}+C_{\hat q}\tilde \epsilon_{1}^{(\hat q-N)\big(\frac{1}{2}+\frac{\gamma'}{2N}\big)}\tilde \epsilon_2^{-N\big(\frac{1}{2}+\frac{\gamma'}{2N}\big)}\nonumber\\
=&C_{\hat q}\tilde \epsilon^{(\hat q-N)\big(\frac{1}{2}+\frac{\gamma'}{2N}\big)-sN\big(\frac{\gamma'}{2N}-\frac{1}{2}\big)}_2+\tilde \epsilon_2^{s(\hat q-N)\big(\frac{1}{2}+\frac{\gamma'}{2N}\big)-N\big(\frac{1}{2}+\frac{\gamma'}{2N}\big)}.
 \end{align}
By choosing $\hat q>0$ large enough, one finds for any $q>\max\{p_1,p_2\}$, 
 $$(\hat q-N)\bigg(\frac{1}{2}+\frac{\gamma'}{2N}\bigg)-sN\bigg(\frac{\gamma'}{2N}-\frac{1}{2}\bigg)>q,~s(\hat q-N)\bigg(\frac{1}{2}+\frac{\gamma'}{2N}\bigg)-N\bigg(\frac{1}{2}+\frac{\gamma'}{2N}\bigg)>q.$$
Thus, we obtain from (\ref{4124obtainfrom20204089}) that  
\begin{align}\label{4125notebetacopynegative}
\int_{\mathbb R^N}m_1^{\frac{1}{2}+\frac{\gamma'}{2N}}m_2^{\frac{1}{2}+\frac{\gamma'}{2N}}\,dx\leq C_q\tilde \epsilon_2^q,~\forall q>\max\{q_1,q_2\},
\end{align}
where $C_{q}>0$ is a constant. 
 Finally, (\ref{4125notebetacopynegative})
 together with (\ref{515notebetacopynegative}) and (\ref{516notecopybetanegative}) implies (\ref{513notebetanegativecopy}).

We next show estimate (\ref{514notebetanegativecopy}).  First of all, it is straightforward to obtain from the definitions of $e^i_{\alpha_i}$ that \begin{align}\label{521notebetanegativecopy}
e^i_{\alpha_i}\leq \mathcal E^i_{\alpha_i}(m_{i,\textbf{a}},w_{i,\textbf{a}}),~i=1,2.
\end{align} 
Then, we argue by contradiction to establish the estimate shown in the right hand side of (\ref{514notebetanegativecopy}).  Without loss of generality, we assume for $i=1,$ 
\begin{align*}
\mathcal E^1_{\alpha_1}(m_{1,\textbf{a}},w_{1,\textbf{a}})\geq e^1_{\alpha_i}+\Gamma\tilde \epsilon_2^{q},
\end{align*}
where $\Gamma>0$ is large enough and note that $\tilde \epsilon_2^q\ll \min\{e^1_{\alpha_1},e^2_{\alpha_2}\}$ thanks to   (\ref{eialphainotebetacopy}).
Whereas, by using  (\ref{521notebetanegativecopy}),
one has 
\begin{align*}
e^1_{\alpha_1}+e^2_{\alpha_2}+\Gamma\tilde \epsilon_2^q\leq \sum_{i=1}^2\mathcal E_{\alpha_i}(m_{i,\textbf{a}},w_{i,\textbf{a}})\leq e_{\alpha_1,\alpha_2,\beta},
\end{align*}
which is contradicted to \eqref{513notebetanegativecopy}.  Therefore, we find (\ref{514notebetanegativecopy}) holds for $i=1$.  Proceeding the similar argument, we can show (\ref{514notebetanegativecopy}) holds for $i=2.$

\end{proof}
\begin{remark}
We remark that by using the exponential decay properties of $m$ shown in Proposition \ref{appenexp}, the conclusion in Lemma \ref{lemma4320240809} holds when (\ref{assumethm17notecopybeta}) is replaced by the following condition 
\begin{align}\label{550condition20240929}
  \lim_{\bf{a}\nearrow \bf{a}^*}  \frac{e^{{-\tilde \epsilon_1}^{-\hat\delta}}}{\tilde \epsilon_2^{q_2}}=0,
    \end{align}
    where $q_2$ is given in \eqref{52viinnotesbetacopy} and constant $\hat \delta>0$ depends on $\delta_0$ and $\kappa_1$, which are defined in Proposition \ref{appenexp}.  Moreover, with the aid of (\ref{550condition20240929}), one can show all conclusions of Theorem \ref{thm17multipopulation}.  In other words, assumption (\ref{assumethm17notecopybeta}) can be relaxed as (\ref{550condition20240929}) if the exponential decay properties of $m_1$ and $m_2$ are established.
    
\end{remark}

Now, we are ready to prove Theorem \ref{thm17multipopulation}, which is 

{\it{Proof of Theorem \ref{thm17multipopulation}:}}
\begin{proof}
First of all, we have the fact that 
\begin{align*}
\mathcal E^i_{\alpha_i}(m_{i,\textbf{a}},w_{i,\textbf{a}})\geq \hat \varepsilon_i^{\gamma'}\bigg(1-\frac{\alpha_i}{a^*}\bigg)+\int_{\mathbb R^N}V_i(\hat \varepsilon_i x+x_{i,\hat\varepsilon})m_{i,\hat\varepsilon}\,dx.
\end{align*}
We compute to get
\begin{align}\label{413120240809}
\int_{\mathbb R^N}V_i(\hat\varepsilon_i x+x_{i,\hat\varepsilon})m_{i,\hat\varepsilon}\,dx&=\hat\varepsilon_i^{q_i}\int_{\mathbb R^N}\frac{V_i(\hat\varepsilon_i x+x_{i,\hat\varepsilon})}{|\hat\varepsilon_i x+x_{i,\hat\varepsilon}-x_i|^{p_i}}\bigg|x+\frac{x_{i,\hat\varepsilon}-x_i}{\hat\varepsilon_i}\bigg|m_{i,\hat\varepsilon}\,dx=\hat\varepsilon^{q_i}_iI_{\hat \varepsilon}.
\end{align}
By using (\ref{problem51innotescopy}), (\ref{514notebetanegativecopy}) and (\ref{eialphainotebetacopy}), we proceed the similar argument shown in Theorem \ref{thm16refinedblowup}, then obtain up to a subsequence,
\begin{align*}
\frac{x_{i,\hat\varepsilon}-x_i}{\hat\varepsilon_i}\rightarrow y_{i0}\text{ for some }y_{i0}\in\mathbb R^N.
\end{align*}
Hence, one has $I_{\hat\varepsilon}$ defined in (\ref{413120240809}) satisfies
\begin{align*}
\lim_{\hat\varepsilon\rightarrow 0}I_{\hat \varepsilon}\geq b_i\int_{\mathbb R^N}|x+y_{i0}|^{q_i}m_i(x)\,dx\geq \bar \nu_{q_i}b_i,
\end{align*}
where the ``=" in the second inequality holds if and only if $H_{\bar m_i,q_i}(y_{i0})=\inf_{y\in\mathbb R^N}H_{\bar m_i,q_i}(y)=\bar \nu_{q_i}.$
It then follows that 
\begin{align*}
\mathcal E^i_{\alpha_i}(m_{i,\textbf{a}},w_{i,\textbf{a}})\geq& \hat\varepsilon_i^{\gamma'}\frac{a^*-\alpha_i}{a^*}
+\hat\varepsilon_i^{q_i}b_i\bar\nu_{q_i}(1+o(1))\nonumber\\
\geq &(1+o(1))\frac{q_i+\gamma'}{p_i}\bigg(\frac{q_i\bar\nu_{q_i}b_i}{\gamma'}\bigg)^{\frac{\gamma'}{\gamma'+1}}\bigg(\frac{a^*-\alpha_i}{a^*}\bigg)^{\frac{q_i}{\gamma'+q_i}},
\end{align*}
and the equality holds if and only if 
\begin{align*}
\hat\varepsilon_i^{\gamma'}=(1+o(1))\bigg(\frac{\gamma'(a^*-\alpha_i)}{a^*b_i\bar\nu_{q_i}q_i}\bigg)^{\frac{1}{\gamma'+q_i}}.
\end{align*}
Comparing the lower bound and the upper bound of $\mathcal E_{\alpha_i}^i(m_{i,\textbf{a}},w_{i,\textbf{a}})$ with $i=1,2$ shown in Lemma \ref{lemma4320240809}, we see  that \eqref{eq-1.40} and \eqref{eq-1.40} hold. The proof of this theorem is finished.
\end{proof}
Theorem \ref{thm17multipopulation} exhibits the refined blow-up profiles of ground states when interaction coefficient $\beta<0$ under some technical assumptions \eqref{52viinnotesbetacopy} and \eqref{assumethm17notecopybeta}.  It is worthy mentioning that with the aid of Proposition \ref{appenexp}, we are able to improve the condition (\ref{assumethm17notecopybeta}) such that the conclusion shown in Theorem \ref{thm17multipopulation} still holds.

\section{Conclusions}
In this paper, we have studied the stationary multi-population Mean-field Games system (\ref{ss1}) with decreasing cost self-couplings and interactive couplings under critical mass exponents via variational methods.  Concerning the existence of ground states, we classified the existence of minimizers to constraint minimization problem (\ref{problem1p1}) in terms of self-focusing coefficients and interaction coefficients, in which the attractive and repulsive interactions were discussed, respectively.  In particular, when all coefficients are subcritical, we showed the existence of ground states to (\ref{ss1}) by the duality argument.  Then, the basic and refined blow-up profiles of ground states were studied under some mild assumptions of potential functions $V_i,$ $i=1,2.$  

We would like to mention that there are also some open problems deserve explorations in the future.  In this paper, we focus on the existence and asymptotic profiles of ground states to (\ref{ss1}) with mass critical local couplings under the case of $\gamma<N'$ with $\gamma$ given in (\ref{MFG-H}) since population density $m$ can be shown in some H\"{o}lder space by using Morrey's estimate and system (\ref{ss1}) enjoys the better regularity.  Whereas, if $\gamma\geq N'$, nonlinear terms \eqref{alpha12betadef} in (\ref{ss1}) become singular and one can only show $m\in L^p(\mathbb R^N)$ for some $p>1$ by standard Sobolev embedding.  Correspondingly, the positivities of $m_{1}$ and $m_2$ given in (\ref{ss1}) can not be shown due to the worse regularities.  Hence, when $\gamma\geq N'$, it seems a challenge but interesting to prove the existence of ground states even under the mass subcritical local couplings.  On the other hand, while discussing the concentration phenomena in (\ref{ss1}), we impose some assumptions on potential functions $V_i$, $i=1,2.$  In detail, when the interaction coefficient $\beta$ satisfies $\beta>0$, (\ref{moreassumptionforthm12}) is assumed for the convenience of analysis.  However, when $V_1$ and $V_2$ satisfy $\inf_{x\in\mathbb R^N}(V_1(x)+V_2(x))>0$, the classification of the existence of minimizers is more intriguing and the corresponding blow-up profiles analysis might be more complicated.  Similarly, if the interaction is repulsive, the investigation of the concentration property of global minimizers is also challenging when $V_1$ and $V_2$ have common global minima.

\section*{Acknowledgments}
We thank Professor M. Cirant for stimulating discussions and many insightful suggestions.  Xiaoyu
Zeng is supported by NSFC (Grant Nos. 12322106, 11931012,  12171379, 12271417).






\begin{appendices}
\setcounter{equation}{0}
\renewcommand\theequation{A.\arabic{equation}}

\section{Exponential Decay Estimates of Population Densities}\label{appendixA}
In this appendix, we investigate the exponential decay property of population density $m$.  More precisely, we consider the following system:
\begin{align*}
\left\{\begin{array}{ll}
-\Delta u_{\varepsilon}+C_H|\nabla u_{\varepsilon}|^{\gamma}+\lambda_{\varepsilon}=\varepsilon^{\gamma}V(\varepsilon x+x_{\varepsilon})+g_{\varepsilon} (x),&x\in\mathbb R^N,\\
-\Delta m_{\varepsilon}+C_H\gamma\nabla\cdot (m_{\varepsilon}|\nabla u_{\varepsilon}|^{\gamma-2}\nabla u_{\varepsilon})=0,&x\in\mathbb R^N,
\end{array}
\right.
\end{align*}
where $\gamma>1$, $V$ and $g_{\varepsilon}$ are given.  Under some assumptions of $g_{\varepsilon}$ and $\lambda_{\varepsilon}$, one can show $m_{\varepsilon}$ satisfies the exponential decay property, which is
\begin{proposition}\label{appenexp}
Denote $(m_{\varepsilon},u_{\varepsilon},\lambda_{\varepsilon})\in W^{1,p}(\mathbb R^N)\times C^2(\mathbb R^N)\times \mathbb R$ as the solution to 
\begin{align*}
\left\{\begin{array}{ll}
-\Delta u_{\varepsilon}+C_H|\nabla u_{\varepsilon}|^{\gamma}+\lambda_{\varepsilon}=\varepsilon^{\gamma}V(\varepsilon x+x_{\varepsilon})+g_{\varepsilon} (x),&x\in\mathbb R^N,\\
-\Delta m_{\varepsilon}+C_H\gamma\nabla\cdot (m_{\varepsilon}|\nabla u_{\varepsilon}|^{\gamma-2}\nabla u_{\varepsilon})=0,&x\in\mathbb R^N,
\end{array}
\right.
\end{align*}
where $m_{\varepsilon}\geq0$ in $\mathbb R^N$, $u_{\varepsilon}$ is uniformly bounded from below and H\"{o}lder continuous function $V$ satisfies (\ref{Vicondition1}) and (\ref{Vicondition2}), and $g_{\varepsilon}\in C^{0,\theta}(\mathbb R^N)$ with $\theta\in(0,1)$ independent of $\varepsilon$.  Suppose that
\begin{itemize}
    \item[(i).]$\lambda_{\varepsilon}\rightarrow \lambda_0$ up to a subsequence with $\lambda_0<0$;
    \item[(ii).] $g_{\varepsilon}(x)\rightarrow 0$ uniformly as $|x|\rightarrow +\infty$,
\end{itemize}
 then we have there exist constants $C>0$ and $R>0$ independent of $\varepsilon$ such that
\begin{align}\label{A020241008}
0<m_{\varepsilon}\leq Ce^{-\frac{\kappa_1}{2}|x|^{\delta_0}}\text{ when }|x|>R,
\end{align}
where constant $\delta_0\in(0,\min\{\gamma-1,1\})$, constant $\kappa_1>0$ and they are independent of $\varepsilon.$
\end{proposition}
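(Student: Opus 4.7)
Proof proposal. The plan is to combine a Bernstein-type uniform linear lower bound on $u_\varepsilon$ with a weighted test-function estimate on the Fokker--Planck equation for $m_\varepsilon$, and then upgrade integrability to pointwise decay via Morrey regularity.

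First, using (i), (ii) and $V\geq 0$, for $|x|\geq R_0$ (uniform in $\varepsilon$) the $u_\varepsilon$-equation gives $-\Delta u_\varepsilon+C_H|\nabla u_\varepsilon|^\gamma\geq -\lambda_0/4>0$. Comparison with the subsolution $\psi(x)=c_1(|x|-R_0)_++c_2$, where $c_1>0$ is chosen so that $C_Hc_1^\gamma<-\lambda_0/8$ and $c_2$ is adjusted via the hypothesized uniform lower bound on $u_\varepsilon$ along $\partial B_{R_0}$, yields the uniform linear growth
\[
u_\varepsilon(x)\geq c_1|x|-c_2,\qquad |x|\geq R_0,
\]
with $c_1,c_2,R_0$ independent of $\varepsilon$.

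Next, fix $\delta_0\in (0,\min\{1,\gamma-1\})$ and $\kappa>0$ small, and set $\Phi(x):=\exp\bigl(\kappa(1+u_\varepsilon(x))^{\delta_0}\bigr)$. A direct chain-rule calculation, using the $u_\varepsilon$-equation to eliminate $\Delta u_\varepsilon$, yields
\[
-\Delta\Phi+C_H\gamma|\nabla u_\varepsilon|^{\gamma-2}\nabla u_\varepsilon\cdot\nabla\Phi=\kappa\delta_0\,\Phi\,\mathcal G_\varepsilon(x),
\]
where $\mathcal G_\varepsilon$ consists of a positive principal part $(1+u_\varepsilon)^{\delta_0-1}\bigl[-\lambda_\varepsilon+C_H(\gamma-1)|\nabla u_\varepsilon|^\gamma+\varepsilon^\gamma V(\varepsilon x+x_\varepsilon)+g_\varepsilon\bigr]$, a harmless positive term $(1-\delta_0)(1+u_\varepsilon)^{\delta_0-2}|\nabla u_\varepsilon|^2$, and a destabilizing correction $-\kappa\delta_0(1+u_\varepsilon)^{2\delta_0-2}|\nabla u_\varepsilon|^2$. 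Combining the gradient bound $|\nabla u_\varepsilon|\leq C(1+\varepsilon^\gamma V+g_\varepsilon)^{1/\gamma}$ from Lemma \ref{sect2-lemma21-gradientu}, the linear growth from the previous step, and the constraint $\delta_0<\min\{1,\gamma-1\}$, we verify that after shrinking $\kappa$ and enlarging $R_0$ to some $R_1$,
\[
\mathcal G_\varepsilon(x)\geq \tfrac{-\lambda_0}{4}(1+u_\varepsilon(x))^{\delta_0-1}>0,\qquad |x|\geq R_1,
\]
uniformly in $\varepsilon$. Testing the equation $\Delta m_\varepsilon+C_H\gamma\nabla\!\cdot\!(m_\varepsilon|\nabla u_\varepsilon|^{\gamma-2}\nabla u_\varepsilon)=0$ against $\Phi$ and integrating by parts gives
\[
0=\kappa\delta_0\int_{\mathbb R^N}m_\varepsilon(x)\,\Phi(x)\,\mathcal G_\varepsilon(x)\,dx.
\]
The integrand over $B_{R_1}$ is uniformly bounded (since $u_\varepsilon,|\nabla u_\varepsilon|$ are uniformly bounded there and $m_\varepsilon\in L^\infty$ uniformly via $W^{1,\gamma'}\hookrightarrow C^{0,\theta}$), so
\[
\int_{|x|\geq R_1}m_\varepsilon(x)\,e^{\kappa_0|x|^{\delta_0}}\,dx\leq C,\qquad\kappa_0=\kappa c_1^{\delta_0},
\]
uniformly in $\varepsilon$.

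Finally, the uniform $W^{1,\gamma'}$-bound on $m_\varepsilon$ together with Morrey's embedding ($\gamma'>N$) gives a uniform $C^{0,\theta}$-modulus for $m_\varepsilon$; combining this with the weighted integrability above, a standard ball-averaging argument yields the pointwise estimate $m_\varepsilon(x)\leq Ce^{-\kappa_1|x|^{\delta_0}/2}$ for $|x|\geq R$, with $\kappa_1,R,C$ independent of $\varepsilon$. The strict positivity $m_\varepsilon>0$ then follows from the strong maximum principle applied to the linear Fokker--Planck equation.

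The main technical obstacle is verifying the uniform lower bound on $\mathcal G_\varepsilon$. The negative term $-\kappa\delta_0(1+u_\varepsilon)^{2\delta_0-2}|\nabla u_\varepsilon|^2$ must be absorbed by $(1+u_\varepsilon)^{\delta_0-1}\bigl[-\lambda_\varepsilon+C_H(\gamma-1)|\nabla u_\varepsilon|^\gamma\bigr]$. Where $|\nabla u_\varepsilon|$ is bounded, smallness of $\kappa$ together with $(1+u_\varepsilon)^{\delta_0-1}\to 0$ suffices; where $|\nabla u_\varepsilon|$ is large (i.e.\ $\varepsilon^\gamma V+g_\varepsilon$ dominates), the ratio $|\nabla u_\varepsilon|^{2-\gamma}$ combined with Lemma \ref{sect2-lemma21-gradientu} reduces the domination precisely to $\delta_0<\gamma-1$, while the restriction $\delta_0<1$ is exactly what is needed to balance the $-\lambda_\varepsilon$ contribution in the low-gradient regime. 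Patching the two regimes into a single uniform-in-$\varepsilon$ lower bound is the delicate step.
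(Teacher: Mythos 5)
Your proof is correct and follows essentially the same route as the paper: both derive a uniform linear lower bound on $u_\varepsilon$ via comparison, both set up the same Lyapunov function $\Phi=e^{\kappa u_\varepsilon^{\delta_0}}$ (your $(1+u_\varepsilon)^{\delta_0}$ normalization is a cosmetic variant), both absorb the destabilizing term using the gradient bound from Lemma \ref{sect2-lemma21-gradientu} under the same constraint $\delta_0<\min\{\gamma-1,1\}$, and both pass from the weighted $L^1$ bound on $m_\varepsilon$ to pointwise decay via the uniform H\"older/Morrey modulus. Your identification of the balance between the two regimes (low gradient handled by $-\lambda_\varepsilon$, high gradient by $C_H(\gamma-1)|\nabla u_\varepsilon|^\gamma$ with the exponent condition) matches the paper's case split $\gamma\geq 2$ versus $1<\gamma<2$ in spirit.
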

\begin{proof}
By following the same argument shown in the proof of Theorem \ref{thm15blowupnegative}, one has 
\begin{align}\label{appendixA4lowerbound}
u_{\varepsilon}(x)\geq C|x|\text{ for }|x|>\hat R,
\end{align}
where $\hat R>0$ is some constant.  Then we define the  Lyapunov function $\Phi=e^{\kappa u_{\varepsilon}^{\delta_0}}$ with $0<\kappa<1$ and $0<\delta_0<1$ will be determined later.  We compute to get
\begin{align*}
&-\Delta\Phi+C_H\gamma|\nabla u_{\varepsilon}|^{\gamma-2}\nabla u_{\varepsilon}\cdot \nabla\Phi\nonumber\\
=&\kappa\delta_0\Phi u_{\varepsilon}^{\delta_0-1}[-\Delta u_{\varepsilon}-(\kappa\delta_0 u_{\varepsilon}^{\delta_0-1}+(\delta_0-1)u_{\varepsilon}^{-1})|\nabla u_{\varepsilon}|^2+C_H\gamma|\nabla u_{\varepsilon}|^{\gamma}]\nonumber\\
=&\kappa\delta_0\Phi u_{\varepsilon}^{\delta_0-1}[C_H(\gamma-1)|\nabla u_{\varepsilon}|^{\gamma}-\lambda_{\varepsilon}+\varepsilon^{\gamma}V(\varepsilon x+x_{\varepsilon})+g_{\varepsilon}(x)-(\kappa\delta_0 u_{\varepsilon}^{\delta_0-1}+(\delta_0-1)u_{\varepsilon}^{-1})|\nabla u_{\varepsilon}|^2].
\end{align*}
Without loss of generality, we assume $u_{\varepsilon}\geq 1$ by fixing $u_{\varepsilon}(0)$.  Then it is straightforward to show that 
\begin{align*}
(\kappa\delta_0 u_{\varepsilon}^{\delta_0-1}+(\delta_0-1)u_{\varepsilon}^{-1})|\nabla u_{\varepsilon}|^2\leq 2\kappa\delta_0 u_{\varepsilon}^{\delta_0-1}|\nabla u_{\varepsilon}|^2,~|x|>R,
\end{align*}
where $R>0$ is a large constant and we have used $u^{\alpha-1}\geq u^{-1}$.  In addition, by using Lemma \ref{sect2-lemma21-gradientu} and Lemma \ref{lowerboundVkgenerallemma22}, we have the facts that 
\begin{align}\label{eq-A03}
|\nabla u_\varepsilon|^{2-\gamma}\leq C(1+\varepsilon^{\gamma}V)^{\frac{2-\gamma}{\gamma}},\text{ and }u_\varepsilon^{1-\delta_0}\geq C(1+\varepsilon^{\gamma}V)^{\frac{1-\delta_0}{\gamma}}\text{ for }|x|>R,
\end{align}
where $C>0$ is a constant and $0<\delta_0<1$.

Next, we would like to prove there exists $R>0$ independent of $\varepsilon$ such that 
\begin{align}\label{A820240811appen}
\frac{C_H(\gamma-1)}{2}|\nabla u_{\varepsilon}|^{\gamma}\geq 2\kappa\delta_0 u_{\varepsilon}^{\delta_0-1}|\nabla u_{\varepsilon}|^2,~\forall |x|>R.
\end{align}
Actually, when $\gamma\geq 2$, it is easy to show (\ref{A820240811appen}) holds by \eqref{eq-A03} and choosing $\kappa$ small enough.  When $1<\gamma<2$, by taking $\delta_0$ and $\kappa$ such that $2-\gamma\leq 1-\delta_0$ and $\kappa$ small, one finds (\ref{A820240811appen}) holds. 
In summary, upon choosing $\delta_0\in(0,\min\{\gamma-1,1\})$ and $\kappa$ small enough, we apply Condition (i) and (ii) to get 
\begin{align*}
-\Delta \Phi+C_H\gamma|\nabla u_{\varepsilon}|^{\gamma-2}\nabla u_{\varepsilon}\cdot \nabla\Phi\geq C\kappa\delta_0 u_{\varepsilon}^{\delta_0-1}\Phi,\text{ if }|x|>R.
\end{align*}
 Proceeding the similar argument shown in the proof of (\ref{461betanegative20240723}), one finds
\begin{align*}
\sup_{\varepsilon}\int_{\mathbb R^N}e^{\kappa u_{\varepsilon}^{\delta_0}}u_{\varepsilon}^{\delta_0-1}m_{\varepsilon}\,dx<+\infty.
\end{align*}
Therefore, by using the uniformly H\"{o}lder continuity of $m_{\varepsilon}$ and the fact that $u_{\varepsilon}\geq 1$, we obtain for $|x|>R$ with constant $R>0$ independent of $\varepsilon,$
\begin{align}\label{appenhavefrombefore}
0<m_{\varepsilon}(x)\leq Ce^{-\frac{\kappa}{2}u_{\varepsilon}^{\delta_0}},~\delta_0\in(0,\gamma-1),
\end{align}
where $C>0$, $\kappa>0$ is small and $\delta_0\in(0,\min\{\gamma-1,1\})$, which are all independent of $\varepsilon$.  Moreover, in light of (\ref{appendixA4lowerbound}), one has from (\ref{appenhavefrombefore}) that  (\ref{A020241008}) holds. 
\end{proof}
\end{appendices}

\bibliographystyle{plainnat}

\bibliography{ref}

\end{document}